\DeclareMathOperator{\R}{\mathbb{R}}
\DeclareMathOperator{\id}{id}
\newcommand{\Alainrm}[1]{}
\def\bG{\bm{G}}
\def\bH{\bm{H}}
\def\bN{\bm{N}}
\def\bv{{\bm{v}}}
\def\bV{{\bm{V}}}
\def\bu{{\bm{u}}}
\def\bp{{\bm{p}}}
\def\bq{{\bm{q}}}
\def\mBk{\mathbb{B}^k}
\def\mBku{\mathbb{B}^{k+1}}
\def\vphi{\varphi}
\def\Lip{\operatorname{Lip}}
\def\prd{\text{pr}_2}
\title[GGA Model]{The Graded Group Action Framework For Sub-Riemannian Orbit Models In Shape Spaces}
\author{Thomas Pierron}\address{ENS Paris-Saclay, Centre Borelli (\texttt{thomas.pierron@ens-paris-saclay.fr})}
\author{Alain Trouvé}\address{ENS Paris-Saclay, Centre Borelli (\texttt{alain.trouve@ens-paris-saclay.fr})}
\begin{document}
\pagenumbering{Roman}
\maketitle
\begin{abstract}
  In the standard orbit model on shape analysis, a group of diffeomorphism on the ambient space equipped with a right invariant sub-riemannian metric acts on a space of shapes and induces a sub-riemannian structure on various spaces. An important example is given by the Large Deformation Diffeomorphic Metric Mapping (LDDMM) theory that has been developed initially in the context of medical imaging and image registration. However, the standard theory does not cover many interesting settings emerging in applications. We provide here an extended setting, the graded group action (GGA) framework, specifying regularity conditions to get most of the well known results on the orbit model for general groups and shape spaces equipped with a smooth structure of Banach manifold with application to multi-scale shape spaces. A specific study of the Euler-Poincaré equations inside the GCA framework leads to a uniqueness result for the momentum map trajectory lifted from shape spaces with different complexities deciphering possible benefits of over-parametrization in shooting algorithms.
\end{abstract}

\newtheorem{defi}{Definition}[section]
\newtheorem{prop}[defi]{Proposition}
\newtheorem{theo}[defi]{Theorem}
\newtheorem{lemma}[defi]{Lemma}
\newtheorem{cor}[defi]{Corollary}
\newtheorem{rem}[defi]{Remark}
\addto\captionsfrench{\def\proofname{pte}}
\addtolength{\baselineskip}{+0.2\baselineskip}
\parindent=0cm
\pagenumbering{arabic}
\tableofcontents
\newpage
\section{Introduction}
During the last decade, we have witnessed a wide development of the theory of shapes spaces driven by internal mathematical sources coming from its numerous links with other mathematical branches as infinite dimensional riemannian geometry, diffeomorphism groups, PDE, optimal control and statistics, among other. However, as illustrated by the application of shape spaces to computational anatomy, the emergence of new datasets and problems, new pratical and computational needs, are also constantly pushing the bounds of existing mathematical shape spaces frameworks to address new situations. Today, emerging imaging technologies are opening new avenues toward a multi-scale and multi-modal view of  biological tissues from the milimeter-scale of the MRI images to  sub-micrometer scale of genes expression in spatial transcriptomics \cite{BME}. In particular, an appealing route would be to consider simultaneous actions of a coarse-to-fine stacks of transformations acting on a shape shape describing a tissue at different resolutions with interaction across scales.  This introduce a substantial change in the classical framework as developped in \cite{arguillere_trelat_2017} which focused on the action of single group of diffeomorphism on a shape space. It opens up possibilities for the action of more general infinite dimensional group as free products of diffeomorphism groups, as a foundational example.\\

In this paper, we are developping (Section 2 and 3) the graded group action (GGA) framework, an extended perpective on shape spaces defined by the regular action of general groups. This encompasses the usual action of a group of diffeomorphisms of the ambiant space but allows to address with minimal efforts new situations outside this central case. It also leverage many core aspects of a ``good'' shape space theory, such as the proper definition of (sub)-riemannian structures and distances coming from right invariant metric on the group, Hamiltonian formulations, and well-defined shooting equations for computing minimizing geodesics. An illustrative example showing how recently introduced multi-scale shape spaces can be studied in the GGA framework is developed in Section 4. We then address regularity conditions for local existence and uniqueness of a $C^1$ solution of the hamiltonian flow and show that the lifted trajectories in the group satisfy a family of locally lipschitz ODEs on the group indexed by the initial value of the momentum map in line with the Euler-Poincaré equations for right invariant metrics (Section 5). Interestingly, this establishes a correspondence principle according to there initial momenta between hamiltonian evolutions on different shapes spaces associated to a common graded group structure, highlighting the potential benefits of over-parametrization in algorithms in constrast to the traditional reduction pespective (Section 6).

\section{Graded group structures}
\subsection{Admissible graded group structure}
We consider a family $G=\{G^k, k\geq 1\}$ of groups  with Banach differential structure. We  denote $\operatorname{inv}:h\mapsto h^{-1}$ the inverse mapping, and for $g\in G^k$, $L_g:h\in G^k\mapsto gh\in G^k$ and $R_g:h\in G^k\mapsto hg\in G^k$ the left and right multiplications on $G^k$. In this paper, we will assume that $G$ is an admissible graded group structure in the following sense.

\begin{defi}[Admissible graded group structure] We say that $G$ is an admissible graded group structure if the following conditions are satisfied~:
\label{def:hypgroup}
\begin{description}[leftmargin=*]
\label{HypGroup}
    \item[(G.1)] $G^{k+1}$ is a subgroup of $G^k$ with smooth inclusion.
    \item[(G.2)] For $l\geq 0$, the inverse mapping on the restriction $\operatorname{inv} : G^{k+l}\rightarrow G^k$ is $C^l$.
    \item[(G.3)] For $l\geq 0$, the induced multiplication
$$
 \begin{array}{ccl}
    G^{k+l}\times G^k & \longrightarrow & G^k \\
    (g',g) &\longmapsto & g'g\\
  \end{array}
$$
is $C^l$ and $C^{\infty}$ in the first variable $g'$ for $g$ fixed.
    \item[(G.4)] For $l\geq 0$, the induced left infinitesimal action 
    $$
 \begin{array}{ccl}
    T_eG^{k+l}\times G^k & \longrightarrow & TG^k \\
    (u,g) &\longmapsto & u\cdot g = \partial_{g'} (g'g)_{|g'=e}(u)=T_eR_g(u) \in T_gG^k \\
  \end{array}
$$ is a $C^l$ mapping, and $C^{\infty}$ with regards to the first variable. 
\item[(G.5)] The induced right infinitesimal action
$$
\begin{array}{ccl}
    G^{k+1}\times T_eG^k & \longrightarrow & TG^k \\
    (g,u) &\longmapsto & T_eL_g(u) \\
  \end{array}
$$
is $C^1$
\end{description}
\end{defi}

\begin{rem}
    This definition is similar to the concept of ILB-Lie groups of Omori \cite{hideki1997infinite}, exept the definition of Omori imposes that $G^{k+1}$ is a dense subgroup of $G^k$ \cite[Thm 3.7]{hideki1997infinite}. Moreover, in the context of Large deformations, we are not interested here in the properties of the limit $\bigcap_k G^k$, and we will define metrics directly on $G^k$.
\end{rem}

We have the following immediate consequences of properties (G.3) and (G.5)~:
\begin{prop}
    \label{smooth_op}
    For any $k\geq 1$, $G^k$ is a topological group. Morever,  we have 
    \begin{enumerate}
        \item $R_g : G^k\rightarrow G^k$ is  $C^\infty$ for any $g \in G^k$
        \item $L_g : G^k\rightarrow G^k$ is $C^l$ for any $g\in G^{k+l}$
        \item The left multiplication $g\in G^{k+1}\mapsto T_e L_g \in \mathcal{L}(T_eG^k,T_gG^k)$ is locally-lipschitz, in the sense it is locally-lipschitz in any chart.
    \end{enumerate} 
\end{prop}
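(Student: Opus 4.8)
The plan is to read the first three assertions directly off the axioms and to reserve the real work for the operator-norm Lipschitz estimate in item (3). For the topological group statement I would invoke (G.3) and (G.2) with $l=0$: the former (with $G^{k+0}=G^k$) gives that multiplication $G^k\times G^k\to G^k$ is $C^0$, hence continuous, and the latter gives that $\operatorname{inv}:G^k\to G^k$ is $C^0$; together these are exactly the axioms of a topological group. Item (1) is immediate from the last clause of (G.3): for fixed $g$ the map $g'\mapsto g'g=R_g(g')$ is $C^\infty$. Item (2) follows because the joint multiplication $G^{k+l}\times G^k\to G^k$ is $C^l$ by (G.3), so fixing the first argument $g\in G^{k+l}$ yields the $C^l$ map $h\mapsto gh=L_g(h)$.

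For item (3) I would first trivialize both ends of the map. A chart of $G^{k+1}$ around a point $g_0$ identifies a neighborhood with an open set $V\subset E^{k+1}$ (the model space of $G^{k+1}$), while a chart of $G^k$ around $g_0$ trivializes $T_gG^k\cong E^k$ for $g$ nearby, the domain $T_eG^k\cong E^k$ being fixed; by (G.1) these charts can be chosen compatibly. In these coordinates the $C^1$ map of (G.5) becomes a map $\tilde F:V\times E^k\to E^k$ that is $C^1$ and, since each $T_eL_g$ is linear, linear in its second variable, so $\tilde F(v,u)=A(v)u$ with $A(v)\in\mathcal L(E^k,E^k)$ the trivialized $T_eL_g$. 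The goal is then to show that $v\mapsto A(v)$ is locally Lipschitz as a map into $\mathcal L(E^k,E^k)$.

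The key estimate combines the fact that a $C^1$ map is Lipschitz on a small enough ball with a rescaling in $u$. Since $\tilde F$ is $C^1$, its differential is continuous, hence bounded on some product ball $B(v_0,\delta)\times B(0,\varepsilon)$, so the mean value inequality on this convex set makes $\tilde F$ Lipschitz there with some constant $C$; in particular $\|A(v_2)u-A(v_1)u\|\le C\|v_2-v_1\|$ for all $\|u\|\le\varepsilon$ and $v_1,v_2\in B(v_0,\delta)$. Linearity of $A(v)$ in $u$ then upgrades this to the operator bound $\|A(v_2)-A(v_1)\|_{\mathcal L(E^k,E^k)}\le (C/\varepsilon)\,\|v_2-v_1\|$ by scaling an arbitrary unit vector to norm $\varepsilon$, which is exactly local Lipschitzness. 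I expect the main subtlety to be precisely this passage from the joint $C^1$ regularity to an operator-norm estimate: the naive route through boundedness of $\partial_v\tilde F$ uniformly over $\|u\|\le 1$ cannot rely on compactness of balls in the infinite-dimensional $E^k$. The rescaling argument circumvents this, using only continuity at $(v_0,0)$, where $\partial_v\tilde F$ vanishes by linearity, together with linearity in $u$.
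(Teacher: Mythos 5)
Your proposal is correct and follows essentially the same route as the paper: items (1)–(2) and the topological group claim read off (G.2)–(G.3), and for item (3) you trivialize the bundle $TG^k\to G^{k+1}$, use the joint $C^1$ regularity from (G.5) to bound the derivative in the group variable on a product neighborhood $B(v_0,\delta)\times B(0,\varepsilon)$, apply the mean value inequality, and upgrade to an operator-norm Lipschitz bound by rescaling via linearity in $u$ — which is exactly the paper's argument (with its $u\mapsto u\delta/|u|$ rescaling). The subtlety you flag, that one must exploit linearity in $u$ and continuity near $(v_0,0)$ rather than any compactness of balls, is precisely the point the paper's proof handles the same way.
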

\begin{proof}
    The two first points are immediate with (G.3). Let us prove the last point. Suppose $\mathbb{B}^{k}$ (resp. $\mathbb{B}^{k+1}$) is a Banach space that models the group $G^{k}$ (resp. $G^{k+1}$). Let $\pi : TG^k \to G^{k+1}$ denote the pullback of the vector bundle $TG^k\to G^k$ by the inclusion $i:G^{k+1}\hookrightarrow G^k$. Let $U\subset G^{k+1}$ be a local chart of $G^{k+1}$ and $x\in U\to \widehat{T_eL}_x\in \mathcal{L}(T_eG^k,\mathbb{B}^k)$ be a local representation of $g\mapsto T_eL_g$ in local charts of the vector bundle $\pi : TG^k \to G^{k+1}$. By (G.5) $(x,u)\in U\times T_eG^k\mapsto \widehat{T_eL}_x(u) = \widehat{T_eL}(x,u)$ is $C^1$. Therefore, for $x\in U$, there exists $K,\delta>0$ and $U_0$ open convex neighborhood of $x$ in $\mathbb{B}^{k+1}$, such that for any $y\in U_0$ and $u\in B_{\mathbb{B}^k}(0,\delta)$, we have
    $$
    |\partial_1(\widehat{T_eL})(x,u) - \partial_1(\widehat{T_eL})(y,u)|_{\mathcal{L}(\mathbb{B}^{k+1},\mathbb{B}^k)}<K
    $$
    Therefore we get, for $y,y'\in U_0$ and $u\in B_{\mathbb{B}^k}(0,\delta)$
    \begin{align*}
        |\widehat{T_eL}(y,u) - \widehat{T_eL}(y',u)|_{\mathbb{B}^k} &\leq \int_0^1\left|\partial_1(\widehat{T_eL})\left((1-t)y+ty',u\right)\left(y'-y\right)\right|_{\mathbb{B}^k}dt \\
        &\leq \int_0^1\left|\partial_1(\widehat{T_eL})\left((1-t)y+ty',\frac{u\delta}{|u|}\right)\left(y'-y\right)\right|_{\mathbb{B}^k} \frac{|u|}{\delta}dt \\
        &\leq \int_0^1 K  |y'-y|_{\mathbb{B}^{k+1}} \frac{|u|}{\delta}dt = K  |y'-y|_{\mathbb{B}^{k+1}} \frac{|u|}{\delta}
    \end{align*}
Thus we finally get 
$$
|\widehat{T_eL}(y,u) - \widehat{T_eL}(y',u)|_{\mathcal{L}(T_eG^k,\mathbb{B}^k)} < \frac{K}{\delta} |y'-y|_{\mathbb{B}^{k+1}} 
$$ and $g\mapsto T_eL_g$ is locally lipschitz.
\end{proof}

\subsection{Adjoint representation}
The groups $G^k$ are not in general Lie groups, but we can define an equivalent for the adjoint representations..
Let $g \in G^{k+1}$ and let $$\mbox{int}_g : \left|
  \begin{array}{ccl}
    G^k & \longrightarrow & G^k \\
    h &\longmapsto & ghg^{-1} \\
  \end{array}
\right.$$ be the conjugation by $g$

\begin{defi} For all $g\in G^{k+1}$, the automorphism $\operatorname{int}_g$ is $C^1$. We define its derivative :
\begin{equation}
    \operatorname{Ad}_g : \left|
  \begin{array}{ccl}
    T_eG^k & \longrightarrow & T_eG^k \\
    v &\longmapsto &  \operatorname{Ad}_g(v) = T_{e}\operatorname{int}_g(v)\\
  \end{array}
\right.
\end{equation}
\end{defi}
\begin{proof}
Let $g\in G^{k+1}$. We have for $h\in G^k$, $\operatorname{int}_g(h) = R_{g^-1}\circ L_{g}(h)$, and by hypothesis, $L_g$ is $C^1$ on $G^k$ and $R_{g^{-1}}$ is also $C^1$ on $G_k$.
\end{proof}

\subsection{Absolutely continuous curves and differentiable structure}
Now we prove some regularity result on the groups, i.e. we prove there exist a unique global flow associated to a right-invariant vector field with some conditions. We start by giving the definition of absolutely continuous curves with $L^p$ derivative in the groups (or more generally in Banach manifolds) following the work from Glöckner \cite{Glo}. 

Let $\mathbb{B}^k$ a Banach space that models the group $G^k$. Let $p\in [1,+\infty]$ and $a<b\in\R$. We define the vector space $AC_{L^p}([a,b],\mathbb{B}^k)$ of continuous curves $\eta : [a,b] \rightarrow \mathbb{B}^k$ such that there exists $\gamma \in L^p([a,b],\mathbb{B}^k)$ verifying for any $t\in[a,b]$
\begin{equation}
    \eta(t) = \eta(a) + \int_{a}^t \gamma(t) dt 
\end{equation}
This is equivalent to saying that $\eta$ is almost everywhere differentiable with $\eta'\in L^p([a,b],\mathbb{B}^k)$. We introduce on the space $AC_{L^p}([a,b],\mathbb{B}^k)$ the norm $|\cdot|_{AC_{L^p}}$ given by :
$$
|\eta|_{AC_{L^p}} = |\eta(a)|_{\mathbb{B}^k} + |\eta'|_{L^p}
$$
Then $\left(AC_{L^p}([a,b],\mathbb{B}^k),|\cdot|_{AC_{L^p}}\right)$ is a Banach space and we have the continuous inclusion :
$$
AC_{L^p}([a,b],\mathbb{B}^k)\hookrightarrow C([a,b],\mathbb{B}^k)\times L^p([a,b],\mathbb{B}^k)
$$

Now let $I\subset \R$ an interval. We define as $AC_{L^p}(I,G^k)$ as the set of curves $\eta : I \rightarrow G^k$, such that for any local charts $(U,\varphi)$ and  any $a<b$ such that $\eta([a,b])\subset U$, the curve
$$
\varphi\circ\eta : [a,b]\rightarrow \mathbb{B}^k
$$
is in $AC_{L^p}([a,b],\mathbb{B}^k)$. 

We turn now to the definition of a smooth manifold structure on the space of absolutely continuous curves on the full group $G^k$. Glöckner \cite{Glo} proved that if $G$ is a Banach Lie group, then $AC_{L^p}(I,G)$ is also a Banach Lie group. More generally in \cite{Schmeding2016ManifoldsOA}, the author proved that if $M$ is a Banach manifold equipped with some strong riemannian metric, then $AC_{L^p}(I,M)$ also remains a Banach manifold. Following an older work by Krikorian \cite{Krikorian-1972}, that was continued in \cite{golinski2021regulated} for regulated curves, it is however possible to put a differentiable structure on such a space, without any other hypotheses on the manifold $M$ (see appendix \ref{app:A} for a self-contained exposition of the Banach manifold structure). Based on this later construction 
we get the following result.

\begin{prop}
\label{man_acg} 
Assume $I=[a,b]$ with $a<b\in \mathbb{R}$. 
\begin{enumerate}
    \item \label{prop:BM}
The space $AC_{L^p}(I,G^k)$ is a Banach manifold.

\Alainrm{Furthermore, for $a=(a_i)_{0\leq i\leq n}$ such that $0=a_0<\ldots<a_n=1$ and $(U,\varphi)=(U_i,\varphi_i)_{1\leq i\leq n}$ a family of open charts on $\mathcal{M}$, the set $AC_{L^p}(a;U)$ defined as
$$
AC_{L^p}(a;U) = \{ \eta\in C(I,G^k)\ |\ \eta(I_i)\subset U \text{ and }\varphi_i\circ\eta_{|I_i}\in AC_{L^p}(I_i,\mathbb{B}),\ 1\leq i\leq n\ \}
$$ 
is an open submanifold of $AC_{L^p}(I,TG^k)$. The mapping $\Phi : AC_{L^p}(a;U) \rightarrow \prod_{i=1}^n AC_{L^p}(I_i,\mathbb{B})$ defined as
$$
\Phi(\eta) = (\varphi_i\circ\eta_{|I_i})_{1\leq i \leq n}
$$ is a diffeomorphism.}
\item \label{prop:ev} For $t\in I$, the evaluation
$$
\operatorname{ev}_{t} : \left\{\begin{array}{ccl}
    AC_{L^p}(I,G^k) & \longrightarrow & G^k  \\
    \eta & \longmapsto & \eta(t) 
\end{array} \right.
$$ is smooth.
\item \label{prop:tan} The vector bundle $AC_{L^p}(I,TG^k)\rightarrow AC_{L^p}(I,G^k)$ can be taken as the tangent bundle. For $g\in AC_{L^p}(I,G^k)$, we therefore have
$$
T_g AC_{L^p}(I,G^k) = AC_{L^p}(I\leftarrow g^*TG^k)
$$
where $AC_{L^p}(I\leftarrow g^*TG^k)=\{\gamma \in AC_{L^p}(I,TG^k), \ \gamma(t) \in T_{g(t)}G^k, \ \forall t \in I\}$
\end{enumerate}
\end{prop}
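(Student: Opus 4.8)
The plan is to obtain all three statements from the general manifold structure on spaces of $AC_{L^p}$ curves valued in an arbitrary Banach manifold, applied to $M=G^k$, rather than reproving that structure by hand. Concretely, I would invoke the construction recalled in Appendix~\ref{app:A}: given $\eta_0\in AC_{L^p}(I,G^k)$, one selects a finite partition $a=a_0<\dots<a_n=b$ of $I$ and charts $(U_i,\varphi_i)$ on $G^k$ with $\eta_0(I_i)\subset U_i$ on each $I_i=[a_{i-1},a_i]$, and builds a chart around $\eta_0$ whose model space is the Banach space of $AC_{L^p}$ sections of the pullback bundle $\eta_0^*TG^k$. Realized through the chosen partition, such a section is a tuple $(\xi_i)\in\prod_i AC_{L^p}(I_i,\mathbb{B}^k)$ subject to the matching relations at the nodes $a_i$ imposed by the transition diffeomorphisms $\varphi_{i+1}\circ\varphi_i^{-1}$; the chart sends a small such $\xi$ to the curve $t\mapsto \varphi_i^{-1}\bigl(\varphi_i(\eta_0(t))+\xi_i(t)\bigr)$ on $I_i$. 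Part~\ref{prop:BM} is then exactly the assertion that this family of charts forms a smooth atlas, which I would cite from the appendix.

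For the charts to constitute an atlas one must check that the model space is independent, up to isomorphism, of the auxiliary partition and chart family, and that the transition map between two such charts is smooth. After accounting for the finitely many node constraints, this reduces to the smoothness of the superposition (Nemytskii) operator $\xi\mapsto\psi\circ\xi$ on $AC_{L^p}$ induced by a smooth transition diffeomorphism $\psi=\varphi_j\circ\varphi_i^{-1}$. This is the step I expect to be the main obstacle: establishing that superposition operators act smoothly on the $AC_{L^p}$ spaces and that the base-point-dependent model spaces glue consistently across the node matching, so that the whole collection assembles into a genuine Banach-manifold atlas. Everything regularity-heavy lives here, which is precisely why it is offloaded to the appendix.

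Given the atlas, Part~\ref{prop:ev} is immediate by reading the evaluation in a chart: for $t\in I_i$ the map $\operatorname{ev}_t$ becomes $\xi\mapsto \varphi_i^{-1}\bigl(\varphi_i(\eta_0(t))+\xi_i(t)\bigr)$, i.e. the composition of the point-evaluation $AC_{L^p}(I_i,\mathbb{B}^k)\to\mathbb{B}^k$, $\xi_i\mapsto\xi_i(t)$, with the smooth chart inverse $\varphi_i^{-1}$. Since the continuous inclusion $AC_{L^p}(I_i,\mathbb{B}^k)\hookrightarrow C(I_i,\mathbb{B}^k)$ recalled above makes point-evaluation a bounded linear, hence smooth, operator, $\operatorname{ev}_t$ is smooth; its value at a node $a_i$ is unambiguous by the node-matching built into the chart.

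Finally, for Part~\ref{prop:tan} I would differentiate the chart construction. Because the transition maps act on representatives by post-composition, their derivatives act by post-composition with the fibrewise tangent map $T\varphi$, so the induced transition on tangent vectors is again post-composition with a bundle map covering a transition diffeomorphism of $G^k$; consequently the fibres of the tangent bundle assemble exactly into the $AC_{L^p}$ sections of the pullback bundle. Identifying $T_gAC_{L^p}(I,G^k)$ with $AC_{L^p}(I\leftarrow g^*TG^k)$ then amounts to checking that a tangent vector, represented as the velocity $t\mapsto\partial_s|_{s=0}\,\eta_s(t)$ of a smooth curve $s\mapsto\eta_s$ with $\eta_0=g$, is precisely an $AC_{L^p}$ section of $g^*TG^k$ lying over $g(t)$ — the interchange of $\partial_s$ with evaluation being legitimate by the smoothness of $\operatorname{ev}_t$ from Part~\ref{prop:ev} — and that this identification respects the transition functions, which yields the stated description of the tangent bundle $AC_{L^p}(I,TG^k)\to AC_{L^p}(I,G^k)$.
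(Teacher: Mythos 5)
Your overall route does coincide with the paper's: part \eqref{prop:BM} is deferred to the Krikorian-style construction of Appendix \ref{app:A} (Proposition \ref{man_ac}), part \eqref{prop:ev} follows by reading the evaluation in charts, where it becomes the restriction of a continuous linear map (Proposition \ref{propA:ev}), and part \eqref{prop:tan} follows by transporting the construction to $TG^k$ via $w\mapsto[t\mapsto T_{\pi(w)}\operatorname{ev}_t(w)]$ (Proposition \ref{propA:tan}). You also correctly isolate the technical core, namely the smoothness of superposition operators $\xi\mapsto\psi\circ\xi$ on $AC_{L^p}$ spaces, which is what makes the transition maps of the atlas smooth.

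However, the chart construction you attribute to the appendix is not the one it performs, and taken literally it fails; this is a genuine gap. You model a neighborhood of $\eta_0$ on ``the Banach space of $AC_{L^p}$ sections of $\eta_0^*TG^k$'', realized as tuples $(\xi_i)$ with matching relations at the nodes, and with chart map $\xi\mapsto\bigl(t\mapsto\varphi_i^{-1}(\varphi_i(\eta_0(t))+\xi_i(t))\bigr)$. There are two incompatible readings of ``matching relations''. If they are the linear relations $\xi_{i+1}(a_i)=D\psi_i\,\xi_i(a_i)$, with $\psi_i=\varphi_{i+1}\circ\varphi_i^{-1}$, which is what makes the tuples into pullback sections and the model into a Banach space, then your chart map produces curves that are discontinuous at the nodes: continuity at $a_i$ requires $\psi_i\bigl(\varphi_i(\eta_0(a_i))+\xi_i(a_i)\bigr)=\psi_i\bigl(\varphi_i(\eta_0(a_i))\bigr)+D\psi_i\,\xi_i(a_i)$, which fails unless $\psi_i$ is affine. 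If instead you impose the nonlinear matching conditions that do guarantee continuity, the set of admissible tuples is no longer a linear subspace, hence not a model Banach space at all. The paper circumvents exactly this point: it takes $\Phi^\alpha(\eta)=(\varphi_i\circ\eta_{|I_i})_i$ with image $\mathcal{N}^\alpha$ inside the product $\prod_i AC_{L^p}(I_i,\mathbb{B}^k)$, proves that $\mathcal{N}^\alpha$ is the zero set of a smooth submersion $\sigma^\alpha$ (the node defects) whose kernel splits, hence a split submanifold of the product, and builds the atlas from charts of these submanifolds; the pullback-section picture is only recovered a posteriori, as the tangent space in part \eqref{prop:tan}. Your construction can be salvaged if $G^k$ carries a local addition $\tau$, since the chart is then $\xi\mapsto\tau\circ\xi$ intrinsically and no node conditions arise, but that is precisely the extra hypothesis (Schmeding's strong Riemannian metric, or a right-invariant local addition as in Bauer--Harms--Michor) that the Krikorian construction in the appendix is designed to avoid.
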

\begin{proof}
    The proof follows with some adaptations the arguments of the proof of theorem (3C) and the results of (§4) of \cite{Krikorian-1972}.
    A detailed proof is given in appendix A where \eqref{prop:BM} comes from Prop \ref{man_ac}, \eqref{prop:ev} comes from Prop \ref{propA:ev} and \eqref{prop:tan} from Proposition \ref{propA:tan}.
    \Alainrm{
    In this setting, we need to verify $AC$ satisfies the conditions (mm1-4) of a manifold model defined in (§1), which is a consequence of 3.16 and lemma 3.27 in \cite{Glo}. The only technical point is to prove that if $[\alpha',\beta']\subset [\alpha,\beta]$ are two subintervals of $I$, then the restriction mapping $AC_{L^p}([\alpha,\beta],\mathbb{B}^k)\rightarrow AC_{L^p}([\alpha',\beta'],\mathbb{B}^k), \eta \mapsto \eta_{|[\alpha',\beta']}$ is a linear continuous submersion, meaning it has a linear continuous right inverse. In other words, since the restriction is linear and continuous (see \cite{Glo}), we just need to prove that any $\tilde {\eta} \in AC_{L^p}([\alpha',\beta'],\mathbb{B}^k)$ can be extended to a curve $\eta\in AC_{L^p}([\alpha,\beta],\mathbb{B}^k)$ in a linear and continuous way, which can be done with constant parts. We will give a detailed proof of this result in the appendix.}
\end{proof}

The rest of this section is devoted to the study of absolutely continuous curves in $G^k$. We start by showing the existence of an absolutly continuous lift in $G^k$ for any integrable curve in the tangent space $T_eG^{k+1}$.
\begin{prop}[Evolution equation in $G^k$]
\label{reg_groups}
    Let $I\subset \R$ an interval, $t_0\in I$, and $u\in L^p(I,T_eG^{k+1})$. Then the ordinary differentiable equation :
    \begin{equation}
    \label{evol_group}
        \left\{
        \begin{array}{l}
             \dot{g}_t = u_t\cdot g_t = T_eR_{g_t}(u_t) \\
             g_{t_0} = e
        \end{array}
        \right.
    \end{equation}
    has one unique global (i.e. defined on $I$) solution $g\in AC_{L^p}(I,G^k)$.
    
    Furthermore, if $u\in L^p(I,T_eG^{k+n})$ for $n\geq 1$, then for all $t\in I$, the left translation $L_{g_t}:G^k\rightarrow G^k$ is $C^n$.
\end{prop}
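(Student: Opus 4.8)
The plan is to read \eqref{evol_group} as a Carath\'eodory ODE in a chart around $e$ and to exploit right-invariance to globalize. Fix a chart $(U,\varphi)$ of $G^k$ at $e$ and write the local representative of the right-invariant field as $\widehat F(t,x) = \widehat{X}(u_t,x)$, where $\widehat X(u,x)$ is the chart expression of $u\cdot g = T_eR_g(u)$. Since $u_t\in T_eG^{k+1}$, property (G.4) with $l=1$ gives that $\widehat X$ is $C^1$ jointly in $(u,x)$ and linear in $u$; hence on each ball $\{|x|\le r\}$ one has bounds $|\widehat F(t,x)|_{\mathbb{B}^k}\le C(r)|u_t|_{\mathbb{B}^{k+1}}$ and $|\partial_x\widehat F(t,x)|\le L(r)|u_t|_{\mathbb{B}^{k+1}}$, with $t\mapsto |u_t|_{\mathbb{B}^{k+1}}$ in $L^p([a,b])\subset L^1([a,b])$. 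This is exactly the Carath\'eodory setting: the Picard iteration in $C([t_0,t_0+\delta],\overline{B}(0,r))$ maps this space to itself and is a contraction as soon as $\int_{t_0}^{t_0+\delta}C(r)|u_s|\,ds\le r$ and $\int_{t_0}^{t_0+\delta}L(r)|u_s|\,ds<1$, which can be arranged by shrinking $\delta$ thanks to absolute continuity of the integral. This yields a unique local solution, and since $|\dot g_t|_{\mathbb{B}^k}\le C'|u_t|_{\mathbb{B}^{k+1}}\in L^p$ the solution lies in $AC_{L^p}$.

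First I would globalize by right translation. If $g$ solves \eqref{evol_group} with $g_{t_0}=e$, then $R_a(g)=g\cdot a$ solves the same equation with value $a$ at $t_0$, because $T_gR_a(u\cdot g)=u\cdot(ga)$ and $R_a$ is smooth by Proposition~\ref{smooth_op}. The contraction conditions above depend on $u$ only through $\int_J |u_s|_{\mathbb{B}^{k+1}}\,ds$ over subintervals $J$; by uniform absolute continuity of the $L^1$ integral on $[a,b]$ there is a single step $\delta_0>0$ such that the Cauchy problem with unit initial value $e$ at any time $s_j$ is solvable on $[s_j,s_j+\delta_0]$. Partitioning $[a,b]$ into pieces of length $<\delta_0$, solving on each with initial value $e$, and gluing by right translation $g_t = h^j_t\cdot g_{s_j}$ produces a global $AC_{L^p}(I,G^k)$ solution. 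Uniqueness is local (Carath\'eodory) and propagates to all of $I$ by a standard connectedness argument.

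For the last assertion I would identify the left translation with the flow of \eqref{evol_group}. The same computation $T_gR_h(u\cdot g)=u\cdot(gh)$ shows that $s\mapsto g_s\,h$ is the solution issued from $h$ at time $t_0$; hence the time-$t$ flow map of the right-invariant field is exactly $\Phi_t=L_{g_t}$, and the claimed regularity of $L_{g_t}$ is precisely $C^n$ dependence of the flow on its initial condition. Under the stronger hypothesis $u_t\in T_eG^{k+n}$, property (G.4) with $l=n$ gives that $g\mapsto u_t\cdot g$ is $C^n$ in $g$, with $\partial_x^j\widehat F$ ($0\le j\le n$) bounded on balls by $C_j(r)|u_t|_{\mathbb{B}^{k+n}}\in L^1$. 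I would then invoke the Carath\'eodory version of smooth dependence on initial data: the variational equation $\dot y_s=\partial_x\widehat F(s,\Phi_s(h))\,y_s$ is a linear ODE with $L^1$ coefficients and defines $D\Phi_t(h)$, and iterating this argument on the successive variational equations upgrades $\Phi_t=L_{g_t}$ to class $C^n$.

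The main obstacle is this last step: establishing $C^n$ dependence on initial conditions for a Carath\'eodory ODE in a Banach space, i.e.\ verifying that the solution of the variational equation really is the Fr\'echet derivative of $h\mapsto\Phi_t(h)$ and that this derivative is continuous, then running the induction through the higher-order variational equations while keeping all bounds integrable in $t$. One must be careful that the derivative bounds $C_j(r)|u_t|_{\mathbb{B}^{k+n}}$ stay in $L^1$ and that the flow remains in a fixed coordinate ball over $[a,b]$; once the solution is confined to a compact set, as in the global step, these bounds are uniform and the standard Carath\'eodory differentiability machinery applies.
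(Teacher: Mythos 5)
Your local existence argument and your globalization by right translation are exactly the paper's proof. The paper obtains the local solution by applying a Carath\'eodory existence theorem (Theorem C.6 of \cite{Younes2019}) to $F(t,y)=f(y,v_t)$, with the Lipschitz bound $\|\partial_y f(y,v)\|\leq K|v|$ coming from (G.4), and then glues solutions started at $e$ on a subdivision $a=a_1<\dots<a_n=b$ via $g(t)=g_i(t)g_{i-1}(a_i)\cdots g_1(a_2)$, using smoothness of right translations; your uniform step $\delta_0$ obtained from absolute continuity of $\int|u_s|\,ds$ is a welcome precision of the paper's tacit claim that such a subdivision exists. Your identification of the time-$t$ flow map with $L_{g_t}$ is also the paper's. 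Moreover, the ``Carath\'eodory differentiability machinery'' you flag as your main obstacle is not really a gap but a citation: the paper invokes Theorems C.15 and C.18 of \cite{Younes2019} for $C^n$ dependence on initial data once $t\mapsto F(t)\in L^1_{loc}(I,C^n_b(V,\mathbb{B}^k))$.

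The genuine gap is the passage from that local statement to what the proposition asserts, namely that $L_{g_t}$ is $C^n$ \emph{on all of} $G^k$. The flow-regularity theorem only gives $C^n$ regularity of $h\mapsto g_t h$ for $h$ in the chart neighborhood $U$ of $e$ and for $t$ in the local existence interval. Your proposed remedy --- that ``the flow remains in a fixed coordinate ball over $[a,b]$'' once ``confined to a compact set, as in the global step'' --- does not work: a compact trajectory in a Banach manifold need not lie in any single chart, and, more importantly, to differentiate $L_{g_t}$ at an arbitrary $h_0\in G^k$ you must control the translated trajectory $s\mapsto g_s h_0$, which lives nowhere near your chart at $e$, so none of your bounds apply there. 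The paper closes this in one line using the group structure: for $y\in R_h(U)=Uh$ one has $L_{g_t}(y)=R_h\circ L_{g_t}\circ R_{h^{-1}}(y)$, and $R_h$, $R_{h^{-1}}$ are smooth by (G.3) (Proposition \ref{smooth_op}), so the chart-level $C^n$ regularity of $L_{g_t}$ at $e$ transports to every point of $G^k$ at no cost; regularity for all $t\in I$, rather than just the local interval, then follows by writing $L_{g_t}$ as a composition of such maps across the subdivision used in the gluing step, since $L_{g_t}=L_{h^j_t}\circ L_{g_{s_j}}$ and compositions of $C^n$ maps are $C^n$. Your proof needs this (or an equivalent chaining of local flow maps along each translated trajectory) to be complete.
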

\begin{proof}

The first step is to prove local existence of solutions, using classic Picard-Lindelof theorem. Let $(U,\varphi)$ be a local chart around $e$ in $G^k$ with $\varphi : U \rightarrow \mBk$ where $\mBk$ is the modelling Banach space for $G^k$. We denote $V=\vphi(U)\subset \mBk$ and $d_g\vphi\doteq \prd\circ T_g\vphi$ for any $g\in U$ where $\prd:V\times\mBk\to \mBk$ the canonical projection on the second argument. We consider $f:V\times \mBku\to \mBk$ such that
  $$f(y,v)=d_{g}\vphi(u\cdot g)\text{ with }(u,g)=((d_e\vphi)^{-1}(v),\varphi^{-1}(y))\,.$$
  From (G.4), $f$ is $C^{1}$. Since $(y,v)\mapsto \partial_y f(y,v)$ is continuous on $V\times \mBku$ and linear in $v$, we can assume (up to the restriction of $V$ to a smaller open set) that there exists $K>0$ such that $\|\partial_y f(y,v)\|_{\mathcal{L}(\mBk,\mBk)}\leq K|v|_{\mBku}$ for any $(y,v)\in V\times \mBku$.  Hence  the mapping $F: I \times \vphi(U)  \longrightarrow  \mBk$ defined by $F(t,y)=f(y,v_t)$ where $v_t=d_e\vphi(u_t)$ is such that
  $$|F(t,y)-F(t,y')|_{\mBk}\leq K|v_t|_{\mBku}|y-y'|_{\mBk}$$
  so that $F(t)\doteq F(t,.)\in \Lip(V,\mBk)$ and, since $t\mapsto v_t$ is $L^p(I,\mBku)$ for $u\in L^p(I,T_eG^{k+1})$, $t\mapsto F(t)\in L^1_{loc}(I,\Lip(V,\mBk))$.
  Therefore by \cite[Theorem C.6]{Younes2019}, for any $t_0 \in I$, the equation \eqref{evol_group} that is locally equivalent to $\dot{y}_t = F(t,y_t)$ has one unique local solution in $G^{k}$ defined on $I_{t_0}\subset I$.  In the case where $u\in L^p(I,T_eG^{k+n})$ for some $n\geq 1$, we even have that $t\mapsto F(t)\in L^1_{loc}(I,C^n_b(V,\mathbb{B}^k))$. Therefore, by \cite[Theorem C.15 and C.18]{Younes2019} for $t\in I_{t_0}$, the mapping $y_t:x\in V \mapsto y_t(x)\in \mathbb{B}^k$ is $C^n$, where $y_t(x)$ is solution of the equation $\dot{y}_t = F(t,y_t)$ with initial point $y_{t_0}=x$. By uniqueness of the solution, this corresponds in $G^k$ to a $C^n$ mapping $g\in U \mapsto g_t g=L_{g_t}(g)\in G^k$ so that $L_{g_t}$ is locally $C^n$ on $U$. Since $R_h(U)=Uh=R_{h^{-1}}^{-1}(U)$ is an open neighborhood of $h\in G^k$ and since for $y \in R_h(U)$, $L_{g_t}(y)= R_h\circ L_{g_t}\circ R_{h^{-1}}(y)$ where $R_h$ and $R_{h^{-1}}$ are smooth mappings (see (G.3)), we get that $L_{g_t}$ is $C^n$ on $G^k$ for all $t\in I_{t_0}$. 
  
  Now, following the proof from \cite{kriegl1997convenient,Glo}, we prove that the solution is globally defined on $I$.  
  It is enough to consider the case where $I=[a,b]$ is compact. We get from the local existence result that there exists $a=a_1<a_2<\ldots <a_n=b$ such that $I=[a_1,a_n]$ and for any $1\leq i<n$ a solution $g_i \in AC_{L^p}([a_i,a_{i+1}],G^k)$ of 
  $$
 \left\{
        \begin{array}{l}
             \dot{g}_i(t) = u(t)\cdot g_i(t)  \\
             g_i(a_i) = e\,.
        \end{array}
        \right.
  $$
  We can now define a global solution $g(t) = g_{i}(t)g_{i-1}(a_{i})\ldots g_1(a_2)=R_{g_{i-1}(a_{i})\ldots g_1(a_2)}(g_{i}(t))$ for $a_i\leq t \leq a_{i+1}$ which is in $AC_{L^p}(I,G^k)$ as $R_{g_{i-1}(a_{i})\ldots g_1(a_2)}$ is smooth and each $g_i$ is $AC_{L^p}$.
\end{proof}
    From the previous proposition, we can define the evolution map for any time dependent vector field $u\in L^p(I,T_eG^{k+1})$.
\begin{defi}[Evolution map]
    We denote  $\operatorname{Evol}_{G^k} : L^p(I,T_eG^{k+1}) \rightarrow AC_{L^p}(I,G^k)$ the application associating to any time-dependent vector field $u\in L^p(I,T_eG^{k+1})$, the solution $g \in AC_{L^p}(I,G^k)$ of the ODE \eqref{evol_group}. 
\end{defi}

We can prove that the evolution map has some regularity, depending on the space where $u$ lives$~$:

\begin{prop}
\label{Evol_in_G}
Suppose $I$ is a compact interval.
\begin{enumerate}
    \item 
The restriction $\operatorname{Evol}_{G^k} :  L^p(I,T_eG^{k+1+l}) \rightarrow AC_{L^p}(I, G^k)$ with $p\geq 1$ is $C^l$. 
\item For $u,\delta u \in L^p(I,T_eG^{k+1+l})$, its derivative $\delta g = T_u \operatorname{Evol}_{G^k}(\delta u) \in AC_{L^p}(I\leftarrow g^*TG^k)$ is the unique solution of the linear Cauchy problem :
\begin{equation}
\label{linq}
    \delta \dot{g}(t) = \partial_g (u(t)\cdot g)_{|g=g(t)}\delta g(t) + \partial_u(u\cdot g(t))_{|u=u(t)}\delta u(t), \ \delta g(0)=0
\end{equation}
where $g(t)=\operatorname{Evol}_{G^k}(u)(t)$.
\label{evol_reg}
\end{enumerate}
\end{prop}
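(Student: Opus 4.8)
The plan is to prove both statements simultaneously by working in local charts and appealing to the smooth dependence results for ODEs already cited from Younes, exactly as was done in the proof of Proposition~\ref{reg_groups}. The guiding principle is that $\Evolk$ is a composition of the solution operator of a parametrized ODE with a parameter-dependent vector field, and that the regularity of the solution operator in its parameter is inherited from the regularity of the vector field in that parameter.

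First I would set up the local picture. Fix $u\in L^p(I,T_eG^{k+1+l})$ and, covering the compact image $g(I)$ by finitely many charts as in Proposition~\ref{reg_groups}, it suffices (by the composition-with-smooth-$R_h$ gluing argument already used there) to work in a single chart $(U,\vphi)$ around a point of the trajectory. Transporting through $\vphi$, the equation \eqref{evol_group} becomes $\dot y_t=F(t,y_t)$ with $F(t,y)=f(y,v_t)$ and $v_t=d_e\vphi(u_t)$, where $f:V\times\mBku\to\mBk$ is the same map as in Proposition~\ref{reg_groups}. The crucial upgrade here is the regularity of $f$ in its \emph{first} argument as a function of the parameter: since $u$ now lives in $T_eG^{k+1+l}$, condition (G.4) (applied with the index shifted, so that the infinitesimal action $T_eG^{k+1+l}\times G^k\to TG^k$ is $C^{l+1}$ and hence $C^\infty$ in the first variable in the relevant chart) gives that $F(t,\cdot)$ depends on the parameter $v_t$ in a $C^l$ fashion, locally uniformly, with the Lipschitz-in-$y$ control $|F(t,y)-F(t,y')|_{\mBk}\le K|v_t|\,|y-y'|_{\mBk}$ retained. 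The map $\Evolk$ is then literally $u\mapsto(t\mapsto y_t^{(v)})$ followed by $\vphi^{-1}$, where $y^{(v)}$ is the flow associated to the linear-in-$v$ parameter dependence of $F$.

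Next I would invoke the parametrized smooth-dependence theorem for Carathéodory ODEs (the $C^l$-dependence statements \cite[Theorem C.15 and C.18]{Younes2019} already cited), now applied with the parameter being the control $v\in L^p(I,\mBkun)$ rather than the initial condition. This yields that $v\mapsto y^{(v)}$ is $C^l$ as a map into $AC_{L^p}$, and composing with the bounded linear $u\mapsto v$ and the smooth chart maps gives that $\Evolk:L^p(I,T_eG^{k+1+l})\to AC_{L^p}(I,G^k)$ is $C^l$, proving part~(1). For part~(2), differentiating the integral form $y_t=y_{t_0}+\int_{t_0}^t F(s,y_s)\,ds$ with respect to $u$ in the direction $\delta u$, and using that the derivative of a flow solves the variational (linearized) equation, produces precisely the linear Cauchy problem \eqref{linq}: the first term is the $\partial_y F$ contribution (i.e. $\partial_g(u\cdot g)$) and the second is the $\partial_v F$ contribution tested against $\delta v=d_e\vphi(\delta u)$ (i.e. $\partial_u(u\cdot g)\delta u$). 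Uniqueness follows from the linear Grönwall/Picard--Lindelöf argument applied to this linear equation, whose coefficient $\partial_g(u(t)\cdot g)$ is integrable in $t$ by the same bound as above.

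The main obstacle I expect is purely the bookkeeping of regularity orders across the grading: one must check that requiring $u\in T_eG^{k+1+l}$ is exactly what makes $F$ depend $C^l$-smoothly on the parameter while keeping the \emph{base} equation living in the fixed space $\mBk$, so that the solution stays in $AC_{L^p}(I,G^k)$ and the cited ODE theorems apply with their hypotheses met. A secondary technical point is justifying that the derivative $\delta g$ indeed lands in the tangent space $AC_{L^p}(I\leftarrow g^*TG^k)$ described in Proposition~\ref{man_acg}\eqref{prop:tan}, i.e. that the chartwise variational solution glues into a global section of $g^*TG^k$; this follows from the chart-independence of the variational equation together with the manifold structure of Proposition~\ref{man_acg}.
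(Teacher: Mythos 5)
Your proposal has a genuine gap at its central step. The theorems you invoke from \cite{Younes2019} (Theorems C.15 and C.18, as used in the proof of Proposition \ref{reg_groups}) give $C^n$ dependence of the solution on the \emph{initial condition} for a fixed time-dependent vector field; they say nothing about differentiability of the solution with respect to the \emph{control} $v\in L^p$ entering through $F(t,y)=f(y,v_t)$. Writing ``now applied with the parameter being the control $v\in L^p(I,\mathbb{B}^{k+1+n})$ rather than the initial condition'' is not an application of those theorems but a restatement of the proposition to be proved: smoothness of the input-to-state map $v\mapsto y^{(v)}$ as a map into the path space is exactly the content of part (1), and it cannot be reduced to dependence on initial data (the usual augmentation trick $\dot\lambda=0$ works for a constant Banach-space parameter, not for a time-dependent $L^p$ control). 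A second, related gap: even granting pointwise $C^{l+1}$ regularity of $(y,v)\mapsto f(y,v)$, you still need that the induced superposition operator $(y,u)\mapsto\bigl(t\mapsto u(t)\cdot y(t)\bigr)$ is $C^l$ between the path spaces $AC_{L^p}\times L^p\to L^p$; differentiability of Nemytskii-type operators on $L^p$ spaces is not automatic, and this is precisely where the paper invokes \cite[prop 2.3]{Glo}.

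The paper closes these holes by a different mechanism: it works in a chart $(AC_{L^p}(a;U),\Phi)$ of the path-space manifold from Proposition \ref{man_acg}, forms the constraint map $C(y,u)=\bigl(y^1(0),\,\dot y-u\cdot y,\,\sigma(y)\bigr)$ --- which is $C^l$ thanks to the Gl\"ockner superposition result --- shows that $\partial_y C(y,u)$ is a Banach isomorphism because $\partial_yC(y,u)\,\delta y=(\delta q_0,\delta w,\delta\sigma)$ is a well-posed linear Cauchy problem on each subinterval with the prescribed boundary data, and then applies the implicit function theorem to conclude that $u\mapsto\Phi(\operatorname{Evol}_{G^k}(u))$ is $C^l$. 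The variational equation \eqref{linq} then falls out by differentiating the identity $C(y_{\mathrm{ev}}(u),u)=(e,0,0)$, which is the rigorous version of your part-(2) computation (and your gluing of chartwise variational solutions is likewise already encoded in the conditions $\delta y^1(0)=0$, $d_y\sigma(\delta y)=0$). If you want to keep the spirit of your argument, you would have to prove an input-to-state differentiability theorem yourself, e.g.\ by a uniform-contraction or fixed-point-with-parameters argument; at that point you are effectively reconstructing the paper's implicit-function-theorem proof, and there is no shortcut through the initial-condition dependence theorems you cite.
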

\begin{proof} 
  \def\LpG{L^p(I,T_eG^{k+l+1})}
  \def\Evol{\operatorname{Evol}_{G^k}}
  \def\Bk{\mathbb{B}^k}
  \def\Bklp{T_eG^{k+l+1}}
  \def\yev{y_{\text{ev}}}
  We assume $I=[0,1]$ and we prove the result locally. Let $u_0\in \LpG$, $g_0=\Evol(u_0)$ and consider a chart $(\mathcal{U}=AC_{L^p}(a;U),\Phi)$ around $g_0$ as introduced in appendix \ref{man_ac} where $a=(a_i)_{1\leq i\leq n}$, $U=(U_i)_{1\leq i\leq n}$ and $\phi=(\varphi_i)_{1\leq i\leq n}$ are such that $0=a_0<a_1<\ldots <a_n=1$, and $(U_1,\varphi_1),\ldots ,(U_{n},\varphi_n)$ is a  collection of charts on $G^k$. In the sequel we denote $I_i=[a_{i-1},a_i]$ and $V_i=\varphi_i(U_i)$ for $1\leq i\leq n$. 

  Working in local coordinates, consider for $1\leq i\leq n$ the mapping $V_i\times \Bklp\to \Bk$ defined by $(y^i,u)\mapsto u\cdot y^i\doteq d_g\varphi_i(u\cdot g)= d_e(\varphi_i\circ R_{g})(u)$ with $g=\varphi_i^{-1}(y^i)$. From $(G.4)$ we get that this mapping is $C^{l+1}$ and therefore induced a mapping $AC_{L^p}(I_i,V_i)\times L^p(I_i,\Bklp)\to L^p(I_i,\Bk)$ defined by  $(y^i,u) \mapsto u\cdot y^i\doteq (t\mapsto u(t)\cdot y^t(t))$ which is $C^l$ \cite[prop 2.3]{Glo} so that we get eventually a $C^l$ mapping $(\prod_{i=1}^nAC_{L^p}(I_i,V_i))\times \LpG\to \prod_{i=1}^n L^p(I_i,\Bk)$ defined by  $(y,u)\mapsto u\cdot y\doteq (u.y^i)_{1\leq i\leq n}$ for $y=(y^i)_{1\leq i\leq n}$.\\

We consider now the $C^l$ mapping 
  $$
  C :\left\{
  \begin{array}{ccc}
      \prod_{i=1}^{n}AC_{L^p}(I_i,V_i)\times L^p(I,T_eG^{k+1+l})& \rightarrow & \mathbb{B}^k\times \prod_{i=1}^{n} L^p(I_i,\mathbb{B}^k)\times (\Bk)^{(n-1)}  \\
    \left(y,u\right)  &\mapsto & \left(y^1(0), \dot{y}-u\cdot y,\sigma(y)\right)
  \end{array}
  \right.
  $$
  where $\sigma(y)=(\varphi_{i+1}\circ (\varphi_i)^{-1}\circ y^i(a_i)-y^{i+1}(a_i))_{1\leq i<n}$ is the smooth mapping introduced in appendix  \ref{man_ac} checking for the continuity conditions at the boundaries of the segments $I_i$ so that if $\yev(u)=\Phi(\Evol(u))$  for $u\in L^p(I,T_eG^{k+1+l})$, then $\yev(u)$ verifies~:
  $$
    C(\yev(u),u)=(e_{G^k},0,0)
$$
However,
  \begin{equation*} 
         \partial_{y} C\left(y,u\right)\delta y=
      \left(\delta y^1(0),\delta\dot{y} - \partial_{y} (u\cdot y)\delta y, d_y\sigma(\delta y)\right)
  \end{equation*}
  and for any $(\delta q_0,\delta w, \delta \sigma)\in \mathbb{B}^k\times \prod_{i=1}^{n} L^p(I_i,\mathbb{B}^k)\times(\Bk)^{(n-1)}$, the equation
  $$\partial_yC(y,u) \delta y = (\delta q_0,\delta w, \delta\sigma)$$ is a linear Cauchy problem on each segment $I_i$ with boundary conditions induced by $\delta q_0$ and $\delta\sigma$ at time $(a_i)_{0\leq i\leq n}$ that admits a unique global solution $\delta y \in  \prod_{i=1}^{n}AC_{L^p}(I_i,V_i)$. Thus $\partial_y C(y,u) : \prod_{i=1}^{n}AC_{L^p}(I_i,V_i)\rightarrow \mathbb{B}^k\times \prod_{i=1}^{n} L^p(I_i,\mathbb{B}^k)\times (\Bk)^{(n-1)}$ is a Banach isomorphism and by implicit function theorem, there exists an open neighborhood $W_0\subset L^p(I,T_eG^{k+1+l})$ of $u_0$ such that $\yev$ and $\Evol:W_0\rightarrow AC_{L^p}(a;U)$ are $C^l$.\\

Moreover, $\delta y =T_u\yev(\delta u)$ is solution of
  $\partial_yC(y,u)\delta y+\partial_uC(y,u)\delta u=$
  i.e. $\delta y^{1}(0)=0,\ d_y\sigma(\delta y)=0$ and
  $$\delta \dot{y} =\partial_u(u\cdot y)\delta u +\partial_y(u\cdot y)\delta y=0$$
  which is again  a linear Cauchy problem on each segment $I_i$ with boundary conditions induced by $\delta y^1(0)=0$ and $\delta\sigma = 0$ at  $t=a_i$ for $0\leq i< n$ and admitting a unique global. Thus, for $g=\Evol(u)$, we get that $\delta g$ is solution of \eqref{linq}.
\end{proof}

\subsection{The example of half-Lie groups}
The category of Banach half-Lie groups gives us many examples of families of groups satisfying the (G.1-5) conditions. Riemannian geometries on such spaces were recently studied by Bauer, Harms and Michor in \cite{bauer2023regularity} :
\begin{defi}
    A Banach half-Lie group is a topological group with smooth Banach manifold structure, such that the left multiplication $g\mapsto gg'$ is smooth.
\end{defi}
For the following, we will consider the half-Lie groups to carry a right-invariant local addition as defined in \cite[Definition 3.2]{bauer2023regularity}, which will allow all regularity properties.

Given $G$ a Banach half-Lie group, we wish to define a family $(G^k)$ satisfying conditions (G.1-5). For $k\in \mathbb{N}$, we define as $G^k$ the set of $g\in G^k$ such that $L_g : G\rightarrow G$ and $L_{g^{-1}}: G\rightarrow G$ are $C^k$. We then have the following proposition

\begin{prop}
\label{link_halfLie}
    The family of groups $\{G^k, k\in \mathbb{N}^*\}$ satisfies the conditions (G.1-5)
\end{prop}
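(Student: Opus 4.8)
The plan is to verify the five conditions by playing the single regularity that is free in any half-Lie group --- smoothness of all right translations $R_g$, made manageable by the right-invariant local addition --- against the one property that defines the tower, namely that on $G^k$ the translations $L_g$ and $L_{g^{-1}}$ are $C^k$. I would first set up the differentiable structure. Fixing a right-invariant local addition $\eta$ on $G$ in the sense of \cite[Def. 3.2]{bauer2023regularity}, I take the modelling space $\mathbb{B}^k$ of $G^k$ to be the Banach subspace of $T_eG$ tangent to $G^k$ at the identity, and the charts to be the restrictions to $G^k$ of the $\eta$-centred charts, carried over $G^k$ by the right translations $R_g$, $g\in G^k$. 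That $G^k$ is a subgroup, and that these translations map $G^k$ onto itself, both follow from $L_{hg}=L_h\circ L_g$ and $L_{(hg)^{-1}}=L_{g^{-1}}\circ L_{h^{-1}}$, which keep $L$ and $L^{-1}$ of class $C^k$; right-invariance of $\eta$ is what makes the transition maps smooth despite $L_g$ being only continuous on the ambient $G$. Reading the inclusion $G^{k+1}\hookrightarrow G^k$ in these charts as the continuous linear, hence smooth, inclusion $\mathbb{B}^{k+1}\hookrightarrow\mathbb{B}^k$ gives (G.1).

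Next I would dispatch the two regularities that are essentially formal. In (G.3) the map $g'\mapsto g'g=R_g(g')$ is, for fixed $g$, a right translation and therefore $C^\infty$; in (G.4) the map $u\mapsto u\cdot g=T_eR_g(u)$ is linear in $u$ with $R_g$ smooth, hence $C^\infty$ in the first variable. In both, the derivatives in the distinguished variable are obtained by differentiating the smooth map $R_g$, so no smoothness of any left translation is consumed --- this is precisely why the axioms only demand $C^\infty$ dependence in the group variable $g'$ and the algebra variable $u$.

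The substance of the proof is the joint regularity in the remaining directions, where differentiation runs through left translations and the grading is spent to pay for the lost derivatives. Working in the $\eta$-charts, the product $(g',g)\mapsto g'g=L_{g'}(g)$ has its $g$-derivatives governed by the order of differentiability of $L_{g'}$, and a matching of indices shows that $g'\in G^{k+l}$ (so that $L_{g'}$ is $C^{k+l}$) makes the product a $C^l$ map valued in the finer manifold $G^k$; this is (G.3). The same accounting applied to $g\mapsto T_eR_g(u)$, where the $l$ extra base derivatives are now supplied by $u\in T_eG^{k+l}$, gives (G.4); applied to $g\mapsto T_eL_g(u)$, where $g\in G^{k+1}$ provides the extra derivative needed to differentiate $T_eL_g$ once, it gives the $C^1$ statement (G.5). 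For inversion I would differentiate $g\,g^{-1}=e$ to express $T(\operatorname{inv})$ through $TR$ and $T_eL_{g^{-1}}$, and bootstrap: reading off from $g\in G^{k+l}$ that $L_{g^{-1}}$ is $C^{k+l}$ yields $\operatorname{inv}:G^{k+l}\to G^k$ of class $C^l$, which is (G.2); alternatively this follows from (G.3) and the implicit function theorem, since $R_g$ is a diffeomorphism with smooth inverse $R_{g^{-1}}$.

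The main obstacle, and the step deserving the most care, is to make this derivative-loss accounting rigorous at the Banach-manifold level rather than heuristic: one must verify that each $\mathbb{B}^k$ is complete, that the $\eta$-charts genuinely restrict to $G^k$ with smooth (not merely continuous) transitions, and that the local representatives of the multiplication, inversion and infinitesimal actions carry exactly the claimed number of continuous derivatives with locally bounded differentials. All of this rests on the regularity theorems for half-Lie groups equipped with a right-invariant local addition in \cite{bauer2023regularity}; the delicate point is that a loss of $l$ derivatives must be upgraded to genuine $C^l$ --- rather than merely $l$-fold Gateaux or $C^{l-1}$ --- regularity with a continuous top derivative, which is exactly the strength those theorems, together with the local addition, are designed to provide.
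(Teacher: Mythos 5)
Your proposal is correct and takes essentially the same route as the paper: the paper's entire proof is a citation of \cite[Theorem 3.4]{bauer2023regularity} through the identification $g\mapsto L_g$ of $G^k$ with the right-invariant $C^k$-diffeomorphisms of $G$, and your right-translated $\eta$-charts, together with the deferral of all joint ``derivative-loss'' regularity to those same theorems, reconstruct exactly what that citation packages. One small correction: in your alternative IFT argument for (G.2), solving $g'g=e$ for the less regular variable $g$, the partial differential that must be a Banach isomorphism is $T_gL_{g'}$ (invertible because $g'$, and hence $(g')^{-1}$, lies in $G^{k+l}$ with $l\geq 1$), not $T_{g'}R_g$, which maps $T_{g'}G^{k+l}$ into the strictly larger space $T_{g'g}G^k$ and is not surjective; your primary bootstrap argument and the appeal to \cite{bauer2023regularity} are unaffected by this.
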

\begin{proof}
    The proof is mostly included in the work of Bauer, Harms and Michor in \cite[Theorem 3.4]{bauer2023regularity}, using the identification through the smooth diffeomorphism 
    $$
    \begin{array}{ccl}
    G^k & \longrightarrow & \operatorname{Diff}_{C^k}(G)^G \\
    g &\longmapsto &  L_g\\
  \end{array}
    $$
    where $\operatorname{Diff}_{C^k}(G)^G$ is the space of right-invariant $C^k$-diffeomorphisms on $G$, i.e. diffeomorphisms that commute with $L_g$ for all $g\in G^k$.
\end{proof}

\emph{Examples :} The authors in \cite{Marquis2018HALFLIEG} introduce a wide variety of examples of half-Lie groups, and in particular the groups of diffeomorphisms of finite regularity on a compact manifold. The specific case of diffeomorphisms group of Sobolev regularity was mainly discussed in context of fluid dynamics \cite{ebin_marsden} and in shape analysis \cite{arguillere_trelat_2017,bauer2023regularity,BaBruMichor,bruveris2017completeness}. 

In this paper, we will focus on the group of diffeomorphisms of finite regularity that are equivalent to identity at infinity :
$$
\operatorname{Diff}_{C_0^k}(\R^d) = \{\id + g, g\in C_0^k(\R^d,\R^d), \det(I_d+dg)>0\}
$$
This group is an open subset of the Banach space $C_0^k(\R^d,\R^d)$. The mappings $\varphi,\psi\mapsto\varphi\circ\psi$ and $\varphi\circ\varphi^{-1}$ are continuous (by using Faà di Bruno's formula for the computation of the derivatives of the compositions of mappings, see \cite{bauer2023regularity} for example), and for $\psi\in\operatorname{Diff}_{C_0^k}(\R^d)$, the mapping $\varphi\mapsto\varphi\circ\psi$ is linear and continuous, hence smooth. Therefore the group $\operatorname{Diff}_{C_0^k}(\R^d)$ is a Banach right half-Lie group. In this case, for $l\geq0$, the space of $C^l$ elements of $\operatorname{Diff}_{C_0^k}(\R^d)$ is simply $(\operatorname{Diff}_{C_0^k}(\R^d))^l = \operatorname{Diff}_{C_0^{k+l}}(\R^d)$ such that proposition \ref{link_halfLie} holds.

\section{Sub-Riemannian geometry on Banach manifolds induced by right-invariant metrics}
\label{sec:sub-riem}
\subsection{Definition of the shape space}
In this section we introduce a Banach manifold $\mathcal{Q}$. We want to define strong sub-Riemannian structure on $\mathcal{Q}$ induced by the action of a family of groups $G^k$ equipped with a right-invariant sub-Riemannian metric. Let $k_0\geq 0$ and $\{G^k, k\in \mathbb{N}\}$ a family of topological Banach groups satisfying the conditions (G.1-5) introduced in definition \ref{def:hypgroup}. Let $V$ a Hilbert space continuously embedded in $T_eG^{k_0+2}$. We suppose $G^{k_0}$ acts on $\mathcal{Q}$ and we denote A the action
$$A :\left|
  \begin{array}{ccl}
    G^{k_0}\times \mathcal{Q} & \longrightarrow & \mathcal{Q} \\
    (g, \ q) &\longmapsto & g \cdot q \\
  \end{array}
\right.
$$
We suppose that the following conditions are satisfied :

\begin{description}[leftmargin=*]
    \label{HypActions}
\item[(S.1)] \emph{Continuity of the action} : $A : (g,q)\mapsto g\cdot q$ is continuous
\item[(S.2)] \emph{Infinitesimal action} : For all $q \in \mathcal{Q}$, the mapping $A_q = A(\cdot,q) : g \mapsto g\cdot q$ is $C^{\infty}$, and we denote $\xi_q = \xi(\cdot,q) = \partial_g A(g,q) _{|g=e}$ its continuous differential in $e$.
\item[(S.3)] \emph{Regularity of the action} : For $l>0$, the mappings 
$$
\begin{array}{ccc}
A :\left|
  \begin{array}{ccl}
    G^{k_0+l}\times \mathcal{Q} & \longrightarrow & \mathcal{Q} \\
    (g, \ q) &\longmapsto & g \cdot q \\
  \end{array}
\right.
     & \mbox{ and } &
     \xi : \left|
  \begin{array}{ccl}
    T_eG^{k_0+l}\times \mathcal{Q} & \longrightarrow & T\mathcal{Q} \\
    (u, \ q) &\longmapsto & \xi_q(u) = u\cdot q \\
  \end{array}
\right.
\end{array}
$$
are $C^l$.
\end{description}

\begin{rem}
    Since $V\hookrightarrow T_eG^{k_0+2}$, condition (S.3)  implies that the mapping $\xi : V\times \mathcal{Q}\to T\mathcal{Q}$ is also a $C^1$-vector bundle morphism in the sense of \cite{lang2001fundamentals}, meaning that $q\in\mathcal{Q}\mapsto \xi_q \in \mathcal{L}(V,T\mathcal{Q})$ is $C^1$. Furthermore its derivative is locally lipschitz. If $G$ is a finite dimension Lie group, then $\xi$ is even a $C^2$-vector bundle morphism, but this property does not hold in infinite dimension. 
\end{rem}

In the following, we consider the general framework of a graded group structure acting on a Banach manifold that encompasses both (G.1-5) and (S.1-3) conditions :
\begin{description}
\item[(GGA)]
 $\{G^k, k\in \mathbb{N}\}$ is an admissible graded group structure satisfying the conditions (G.1-5), $\mathcal{Q}$  is a Banach manifold playing the role of a shape space and $k_0\geq 0$ is an integer such that $G^{k_0}$ acts on $\mathcal{Q}$ and satisfies conditions (S.1-3) with $V$ a Hilbert space continuously embedded in $T_eG^{k_0+2}$.
\end{description}

\begin{rem}
    \label{GGAonG}
    In particular, the multiplication of the group $G^{k_0}$ induces a left action that verifies $(S.1-3)$ conditions. Therefore, the family of groups $\{G^k, k\in\mathbb{N}\}$ endowed with its natural left action verifies the (GGA) conditions, with $\mathcal{Q}=G^{k_0}$ and $V\hookrightarrow T_eG^{k_0+2}$. In this case, the infinitesimal action is simply the derivative of the right multiplication :
    $$
    \xi_g(u) = T_eR_g(u) = u\cdot g
    $$
\end{rem}

\subsection{Horizontal curves and distance}
In this part, we will denote by $I$ the closed interval $[0,1]$, and we suppose the hypotheses of the (GGA) framework are satisfied.

\begin{defi}[Horizontal curves in $\mathcal{Q}$]
    An absolutely continuous curve $q : I \rightarrow \mathcal{Q}$ is said to be horizontal if there exists a continuous lift $t\mapsto u(t)\in V$ such that
    $$
    \forall t \in I, \ \dot{q}(t)=\xi_{q(t)}(u(t))
    $$
    We call an horizontal system such a couple $(q,u) \in AC_{L^1}(I,\mathcal{Q})\times L^1(I,V)$.
\end{defi}

\begin{rem}If $q:I\rightarrow \mathcal{Q}$ is a horizontal curve in $\mathcal{Q}$, there can exist two different controls $u_1\neq u_2$ such that 
$$
\dot{q}=\xi_q(u_1)=\xi_q(u_2)
$$
Also, if $u\in L^1(I,V)$, we can always integrate $u$ in $\mathcal{Q}$, meaning there always exists an horizontal curve $q:I\rightarrow \mathcal{Q}$ such that $\dot{q}(t)=\xi_{q(t)}(u(t))$. 
\end{rem}

Similarly to propositions \ref{reg_groups} and \ref{Evol_in_G}, we can define an evolution map in $\mathcal{Q}$ by solving the corresponding ordinary differential equation.
\begin{theo}[Evolution map in $\mathcal{Q}$ and regularity]
    Let $q_0\in \mathcal{Q}$, and $u \in L^2(I,V)$. There exists a unique $q\in AC_{L^2}(I,\mathcal{Q})$ such that $q(0)=q_0$ and
    \begin{equation}
    \label{eq_in_Q}
        \dot{q}(t) = \xi_{q(t)}\big(u(t)\big) \ \mbox{a.e.}
    \end{equation}
    Furthermore the evolution map in $\mathcal{Q}$
    $$
    \operatorname{Evol}_{\mathcal{Q}} : u\in L^2(I,V) \mapsto q^u \in AC_{L^2}(I,\mathcal{Q})
    $$
    where $q^u$ is solution of equation \eqref{eq_in_Q}, is $C^1$. For $u,\delta u \in  L^2(I,V)$, its derivative $\delta q = T_u \operatorname{Evol}_{\mathcal{Q}}\cdot \delta u$ is solution of the linear Cauchy problem

\begin{equation}
\label{linq2}
    \delta \dot{q}(t) = \partial_q \left(\xi_{q}(u(t))\right)_{|q=q^u(t)}\delta q(t) + \partial_u(\xi_{q^u(t)}(u))_{|u=u(t)}\delta u(t), \ \delta q(0)=0
\end{equation}
\end{theo}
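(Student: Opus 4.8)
The plan is to follow the two-step strategy of Propositions \ref{reg_groups} and \ref{Evol_in_G}, transporting the arguments from the group to the shape space by replacing the infinitesimal right multiplication $u\cdot g$ by the infinitesimal action $\xi_q(u)$ and the structural hypotheses (G.4) by (S.3). For existence I would first observe that the group evolution already furnishes a solution. Set $g=\operatorname{Evol}_{G^{k_0}}(u)$, which is defined on all of $I$ by Proposition \ref{reg_groups} since $u\in L^2(I,V)\hookrightarrow L^2(I,T_eG^{k_0+1})$, and put $q(t)=g_t\cdot q_0=A_{q_0}(g_t)$. Using the action property $A(g,q(t))=A(g,A(g_t,q_0))=A(gg_t,q_0)$ and differentiating at $g=e$,
$$\xi_{q(t)}(u_t)=\partial_g A(gg_t,q_0)_{|g=e}(u_t)=T_{g_t}A_{q_0}\big(T_eR_{g_t}(u_t)\big)=T_{g_t}A_{q_0}(\dot g_t)=\dot q(t),$$
so $q$ solves \eqref{eq_in_Q}; since $A_{q_0}$ is $C^\infty$ by (S.2), the composition $q=A_{q_0}\circ g$ lies in $AC_{L^2}(I,\mathcal{Q})$. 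For uniqueness I would argue locally as in Proposition \ref{reg_groups}: in a chart $(U,\varphi)$ of $\mathcal{Q}$ the equation reads $\dot y=F(t,y)$ with $F(t,y)=d_{\varphi^{-1}(y)}\varphi\big(\xi_{\varphi^{-1}(y)}(u_t)\big)$, and since $\xi$ is $C^1$ in $q$ and linear in $u$ we get $F(t,\cdot)\in\Lip$ with constant controlled by $|u_t|_V$, hence $t\mapsto F(t,\cdot)\in L^1_{\mathrm{loc}}(I,\Lip)$; the uniqueness part of \cite[Theorem C.6]{Younes2019} then forces any two $AC_{L^2}$ solutions to coincide on each chart domain, and thus on all of $I$ by connectedness.

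For the $C^1$ regularity of $\operatorname{Evol}_{\mathcal{Q}}$ I would reproduce the implicit function theorem scheme of Proposition \ref{Evol_in_G}. Fix $u_0$, set $\bar q=\operatorname{Evol}_{\mathcal{Q}}(u_0)$, and take a chart $(\mathcal{U}=AC_{L^2}(a;U),\Phi)$ of $AC_{L^2}(I,\mathcal{Q})$ around $\bar q$ as in Proposition \ref{man_acg}, with subdivision $0=a_0<\cdots<a_n=1$, charts $(U_i,\varphi_i)$ of $\mathcal{Q}$, $I_i=[a_{i-1},a_i]$, $V_i=\varphi_i(U_i)\subset\mathbb{B}$. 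The crucial regularity input is that, by (S.3) with $l=2$ together with the continuous (hence smooth) embedding $V\hookrightarrow T_eG^{k_0+2}$, the pointwise map $(y^i,u)\mapsto d_{\varphi_i^{-1}(y^i)}\varphi_i\big(\xi_{\varphi_i^{-1}(y^i)}(u)\big)$ is $C^2$ on $V_i\times V$; by \cite[Prop 2.3]{Glo} the induced superposition operator $\prod_iAC_{L^2}(I_i,V_i)\times L^2(I,V)\to\prod_iL^2(I_i,\mathbb{B})$, written $(y,u)\mapsto\xi_\cdot(u)$, is then $C^1$. This one-derivative loss for superposition operators is exactly what forces the embedding into $T_eG^{k_0+2}$ rather than $T_eG^{k_0+1}$, and I expect it to be the main technical obstacle of the proof.

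With this in hand I would introduce the $C^1$ map $C(y,u)=\big(y^1(0),\ \dot y-\xi_\cdot(u),\ \sigma(y)\big)$ on $\prod_iAC_{L^2}(I_i,V_i)\times L^2(I,V)$, where $\sigma$ encodes the matching conditions at the interior nodes $a_i$ as in Proposition \ref{man_acg}, so that $y_{\mathrm{ev}}(u)=\Phi(\operatorname{Evol}_{\mathcal{Q}}(u))$ satisfies $C(y_{\mathrm{ev}}(u),u)=(\varphi_1(q_0),0,0)$. The partial differential $\partial_yC(y,u)$ sends $\delta y$ to $\big(\delta y^1(0),\ \delta\dot y-\partial_y(\xi_\cdot(u))\delta y,\ d_y\sigma(\delta y)\big)$, and solving $\partial_yC(y,u)\delta y=(\delta q_0,\delta w,\delta\sigma)$ reduces to a linear Cauchy problem on each $I_i$ with boundary and matching data prescribed by $\delta q_0$ and $\delta\sigma$, which admits a unique global solution; hence $\partial_yC(y,u)$ is a Banach isomorphism. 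The implicit function theorem then yields a neighborhood $W_0$ of $u_0$ on which $\operatorname{Evol}_{\mathcal{Q}}$ is $C^1$. Finally, differentiating the identity $C(y_{\mathrm{ev}}(u),u)=(\varphi_1(q_0),0,0)$ gives $\partial_yC\cdot\delta y+\partial_uC\cdot\delta u=0$, that is $\delta y^1(0)=0$, $d_y\sigma(\delta y)=0$ and $\delta\dot y=\partial_y(\xi_\cdot(u))\delta y+\partial_u(\xi_\cdot(u))\delta u$; reading this back through $\Phi^{-1}$ identifies $\delta q=T_u\operatorname{Evol}_{\mathcal{Q}}\cdot\delta u$ as the unique solution of the linear variational equation \eqref{linq2}.
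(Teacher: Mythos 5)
Your proof is correct, but for the main regularity claim it takes a genuinely different route from the paper. The paper factorizes: it first gets a unique maximal solution from Picard--Lindel\"of, identifies it with $t\mapsto A(g(t),q_0)$ where $g=\operatorname{Evol}_{G^{k_0}}(u)$ (which simultaneously gives globality, exactly as in your existence step), and then writes $\operatorname{Evol}_{\mathcal{Q}}=\tilde A_{q_0}\circ\operatorname{Evol}_{G^{k_0}}$, where $\tilde A_{q_0}$ is the push-forward of the smooth orbit map $A_{q_0}$ to curve spaces; the $C^1$ regularity then follows purely by composition, from Proposition \ref{Evol_in_G} applied with $l=1$ on $L^2(I,T_eG^{k_0+2})\supset L^2(I,V)$ together with smoothness of $\tilde A_{q_0}$ via \cite[lemma 3.27]{Glo}, and the derivative is obtained by the chain rule as $\delta q(t)=\xi_{q^u(t)}\circ (T_eR_{g(t)})^{-1}(\delta g(t))$ with $\delta g$ solving \eqref{linq}; the identification of this expression with \eqref{linq2} is only sketched (``by derivation of this equation in charts''). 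You instead use the group lift only for existence and globality, and rerun the implicit-function-theorem scheme of Proposition \ref{Evol_in_G} directly on $AC_{L^2}(I,\mathcal{Q})$, with $\xi_\cdot(u)$ in place of $u\cdot y$ and with (S.3) at $l=2$ plus the embedding $V\hookrightarrow T_eG^{k_0+2}$ feeding Gl\"ockner's superposition result; this is precisely the alternative the paper mentions parenthetically (``adapting the proof of proposition \ref{Evol_in_G} and introducing the mapping $q,u\mapsto \dot q-\xi_q u$''), but which it never carries out. What the paper's composition argument buys is brevity: Proposition \ref{Evol_in_G} is reused as a black box and no new implicit-function setup is needed. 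What your direct argument buys is that the variational equation \eqref{linq2} drops out cleanly from differentiating the identity $C(y_{\mathrm{ev}}(u),u)=(\varphi_1(q_0),0,0)$, making fully rigorous the one step the paper leaves as a sketch. Both routes hinge on the same structural input --- the one-derivative loss in superposition operators on $AC_{L^2}\times L^2$, which is exactly why $V$ must embed in $T_eG^{k_0+2}$ rather than merely $T_eG^{k_0+1}$ --- an observation you correctly isolate as the technical crux.
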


\begin{proof}
    This follows from Proposition \ref{Evol_in_G}. As $u\in L^2(I,V)$, and $\xi:V\times\mathcal{Q}\rightarrow T\mathcal{Q}$ is $C^2$ (S.3), there exists by Picard-Lindelof one unique maximal (not necessarily global) solution $q$ of equation \eqref{eq_in_Q}. There also exists by proposition \ref{Evol_in_G} a unique $g = \operatorname{Evol}_{G^{k_0}}(u) \in AC_{L^2}(I,G^{k_0})$ such that 
    $$
    \dot{g}(t) = u(t)\cdot g(t)
    $$
    It is immediate to verify that $t\mapsto A(g(t),q_0)$ is also solution to equation \eqref{eq_in_Q}, and thus $q(t)=A(g(t),q_0)$. Therefore we have $\operatorname{Evol}_{\mathcal{Q}} = \tilde{A}_{q_0}\circ\operatorname{Evol}_{G^{k_0}}$, where $\tilde{A}_{q_0}$ is given by :
    $$
    \tilde{A}_{q_0} : \left\{\begin{array}{ccl}
         AC_{L^2}(I,G^{k_0}) & \longrightarrow & AC_{L^2}(I,\mathcal{Q})  \\
         g & \longmapsto & [t\mapsto A_{q_0}(g(t))] 
    \end{array}\right.
    $$
    By (S.2), the mapping $A_{q_0}$ is smooth, and therefore the induced mapping $\tilde{A}_{q_0}$ is also smooth \cite[lemma 3.27]{Glo}, with derivative $T_g\tilde{A}_{q_0} : AC_{L^2}(I,TG^{k_0})\rightarrow AC_{L^2}(I,\mathcal{Q})$, $g \in AC_{L^2}(I,G^{k_0})$, given by $$T_g\tilde{A}_{q_0} : \delta g \mapsto [t\mapsto T_{g(t)}A_{q_0}(\delta g(t)) = \xi_{g(t)\cdot q_0}\circ (T_eR_{g(t)})^{-1}(\delta g(t))]  $$
    Now as $L^2(I,V)\hookrightarrow L^2(I,T_e G^{k_0+2})$ smoothly, then the evolution mapping in $\mathcal{Q}$ is also $C^1$ by composition. For $u, \delta u \in L^2(I,V)$, and $\delta q = T_u \operatorname{Evol}_{\mathcal{Q}}(\delta u) = T_{\operatorname{Evol}_{G^{k_0}}(u)}\Tilde{A}_{q_0}\circ T_u\operatorname{Evol}_{G^{k_0}}(\delta u)$ we thus have
    $$
    \delta q(t) = \xi_{q^u(t)}\circ (T_eR_{g(t)})^{-1}(\delta g(t))
    $$ where $g=\operatorname{Evol}_{G^{k_0}}(u)$, and $\delta g = T_u\operatorname{Evol}_{G^{k_0}}(\delta u)$ satisfies equation \eqref{linq}. By derivation of this equation in charts (or adapting the proof of proposition \ref{Evol_in_G} and introducing the mapping $ q, u \mapsto  \dot{q} - \xi_qu$) we get that $\delta q$ is solution of \eqref{linq2}.
\end{proof}

In the following, we will denote 
$$\operatorname{Hor}_{q_0}(I) = \{(q,u)\in AC_{L^2}(I,\mathcal{Q})\times L^2(I,V), \ q=\operatorname{Evol}_{\mathcal{Q}}(u) \mbox{ and } q(0)=q_0\}$$ the space of all horizontal systems with starting point $q_0$. It is in general not a submanifold of $AC_{L^2}(I,\mathcal{Q})\times L^2(I,V)$ but it's in bijection with $L^2(I,V)$.
Now we define the energy and length of an horizontal system :
\begin{defi}[Length and energy]
    Let $(q,u):I\rightarrow\mathcal{Q}\times V$ a horizontal system. We define its length and energy respectively by
    $$
\begin{array}{ccc}
     L(q,u) = \int_I |u(t)|_V dt
     & \mbox{ and } &
    E(q,u)=\frac{1}{2}\int_I |u(t)|_V^2 dt
\end{array}
    $$
\end{defi}

We can therefore define the sub-Riemannian distance induced by $V$ on $\mathcal{Q}$ as the infimum of the length of horizontal curves :
\begin{defi}[Sub-Riemannian distance]
    Let $q_0,q_1\in\mathcal{Q}$. We define the sub-Riemannian distance $d_V(q_0,q_1)$ as
    $$
    d_V(q_0,q_1) = \inf_{\underset{q(0)=q_0,q(1)=q_1}{(q,u) \operatorname{horizontal}} } L(q,u)
    $$
\end{defi}


Furthermore, we have the following proposition that will help us find a minimum :
\begin{prop}
\label{dist_on_Q}
    The application $d_V$ is a true distance on $\mathcal{Q}$, and the topology induced by the sub-Riemannian distance is weaker than the intrinsic topology on $\mathcal{Q}$. Furthermore the distance $d_V$ is also equal to the infimum of the energy on horizontal curves, i.e. for $q_0, q_1 \in\mathcal{Q}$ :
    $$
    d_V(q_0,q_1) = \inf_{\underset{q(0)=q_0,q(1)=q_1}{(q,u) \operatorname{horizontal}} } \sqrt{2E(q,u)}
    $$
   where we  take the infimum over the set 
   $AC_{L^2}(I,\mathcal{Q})\times L^2(I,V)$. 
   
\end{prop}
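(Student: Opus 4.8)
The plan is to verify the three metric axioms (non-negativity being immediate), and then to read off the topology comparison from a single local estimate and the energy identity from a reparametrization argument.

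\textbf{Symmetry and triangle inequality.} Both follow from length-preserving operations on horizontal systems. For symmetry, given a horizontal $(q,u)$ on $I=[0,1]$ joining $q_0$ to $q_1$, I would set $\bar q(t)=q(1-t)$ and $\bar u(t)=-u(1-t)$. Since $\xi_q\in\mathcal L(V,T_q\mathcal Q)$ is linear in its $V$-argument, $\dot{\bar q}(t)=-\dot q(1-t)=\xi_{\bar q(t)}(\bar u(t))$, so $(\bar q,\bar u)$ is horizontal from $q_1$ to $q_0$ with $L(\bar q,\bar u)=L(q,u)$; hence $d_V(q_1,q_0)\le d_V(q_0,q_1)$ and, by the symmetric argument, equality. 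For the triangle inequality I would concatenate two horizontal systems joining $q_0$ to $q_1$ and $q_1$ to $q_2$: reparametrize them affinely onto $[0,\tfrac12]$ and $[\tfrac12,1]$ and glue. The result is in $AC_{L^2}(I,\mathcal Q)$ with control in $L^2(I,V)$, joins $q_0$ to $q_2$, and has length equal to the sum of the two lengths (length being reparametrization-invariant), giving $d_V(q_0,q_2)\le d_V(q_0,q_1)+d_V(q_1,q_2)$.

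\textbf{Non-degeneracy and the topology comparison.} This is the only substantial point, and both assertions rest on a local lower bound. I would fix $q_0$, a chart $(U,\varphi)$ at $q_0$ and $r>0$ with $\overline B(\varphi(q_0),r)\subset\varphi(U)$. By the remark following (S.3), $q\mapsto\xi_q$ is a continuous vector bundle morphism, so in the chart trivialization there is $C>0$ with $\|d_q\varphi\circ\xi_q\|_{\mathcal L(V,\mathbb B)}\le C$ on $\varphi^{-1}(\overline B(\varphi(q_0),r))$. For any horizontal $(q,u)$ starting at $q_0$, as long as $q$ stays in that closed ball the chart representative $\tilde q=\varphi\circ q$ satisfies $|\dot{\tilde q}(t)|_{\mathbb B}\le C\,|u(t)|_V$. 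Distinguishing whether $q$ leaves the ball or not, integrating this yields $L(q,u)\ge \tfrac1C\min\bigl(r,\,|\varphi(q_1)-\varphi(q_0)|\bigr)$ for any endpoint $q_1$ reached by $q$, whence $d_V(q_0,q_1)>0$ whenever $q_1\ne q_0$ (and $d_V=+\infty$ if no horizontal system connects them). Moreover the same bound gives $|\varphi(q_n)-\varphi(q_0)|\le C\,d_V(q_0,q_n)$, so $d_V(q_0,q_n)\to 0$ forces $q_n\to q_0$ in the chart, i.e. in the intrinsic topology; this is exactly the comparison of the two topologies in the statement, the $d_V$-balls being trapped inside chart balls.

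\textbf{Energy equals length.} Here I would use a reparametrization argument on $I=[0,1]$. Cauchy–Schwarz gives $L(q,u)=\int_0^1|u|_V\le\bigl(\int_0^1|u|_V^2\bigr)^{1/2}=\sqrt{2E(q,u)}$ for every horizontal system, hence $d_V(q_0,q_1)\le\inf\sqrt{2E}$. For the reverse inequality I would reparametrize an arbitrary horizontal system of length $\ell$ to nearly constant speed using the regularized arc length $s_\epsilon(t)=\int_0^t(|u|_V+\epsilon)\,d\tau$, normalized to an increasing bijection of $[0,1]$. By the chain rule and linearity of $\xi_q$ the reparametrized system is again horizontal, keeps the same endpoints and length, has control bounded by $\ell+\epsilon$ (so it lies in $L^2$), and a direct computation gives $2E=(\ell+\epsilon)\int_0^1\frac{|u|_V^2}{|u|_V+\epsilon}\,dt\to\ell^2$ as $\epsilon\to0$ by dominated convergence. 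Thus $\inf\sqrt{2E}\le\ell$ for each system, so $\inf\sqrt{2E}\le d_V(q_0,q_1)$, and combining the two inequalities yields the energy formula.

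\textbf{Main obstacle.} Everything except non-degeneracy (time-reversal, concatenation, Cauchy–Schwarz, constant-speed reparametrization) is routine; the entire difficulty is the uniform chart estimate $\|d_q\varphi\circ\xi_q\|\le C$ on a ball together with the exit-time argument turning it into $L(q,u)\ge\tfrac1C\min(r,|\varphi(q_1)-\varphi(q_0)|)$, and I would be careful that this bound holds uniformly on a genuine neighborhood so that it simultaneously delivers positivity of $d_V$ and the topology comparison.
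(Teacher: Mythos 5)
Your proof is correct and follows essentially the same route as the paper's: non-degeneracy (and hence the topology comparison) via a uniform chart bound on $\xi_q$ together with an exit-time estimate, and the energy identity via Cauchy--Schwarz plus the $\epsilon$-regularized constant-speed reparametrization. The only differences are cosmetic: you spell out symmetry, concatenation, and a dominated-convergence limit, where the paper delegates the metric axioms to \cite{arguillere2020sub} and concludes with the cruder bound $|\tilde u|_{L^2}\le |u|_{L^1}+\epsilon$.
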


\begin{proof}
    The proof is mostly contained in \cite{arguillere2020sub}, we include the proof here for sake of completeness. Let's prove first that the application $d_V$ separates points. Let $q_0, \ q_1 \in \mathcal{Q}$ distinct, a chart $U\subset \mathcal{Q}\rightarrow \mathbb{B}$ around $q_0$ with $(\mathbb{B}, |\cdot|_{\mathbb{B}})$ Banach space, and $\epsilon>0$ such that the open ball $B_{\mathbb{B}}(q_0,\epsilon)$ does not contain $q_1$. Since $\xi : U \rightarrow \mathcal{L}(V,TU) \simeq \mathcal{L}(V,\mathbb{B})$ is continuous, there exists an open ball $B' = B_{\mathbb{B}}(q_0,\epsilon')\subset B_{\mathbb{B}}(q_0,\epsilon)$, and $a>0$ such that for all $q\in B'$, we have $
    \lVert\xi_q \rVert_{\mathcal{L}(V,\mathbb{B})} \leq a
    $, i.e. for all $q\in B', u\in V$
    $$
    |\xi_q (u)|_{\mathbb{B}} \leq a |u|_V
    $$
    Now let $(q,u) : I \rightarrow \mathcal{Q}\times V$ horizontal system such that $q(0)=q_0$, and $q(1)=q_1$. As $q$ is continuous, there exists $t_0 \in I$ such that $|q(t_0)-q_0|_{\mathbb{B}}=\epsilon'$ and $q(t)\in B'$ for $t\leq t_0$. We get :
    \begin{equation*}
    \begin{split}
        L(q,u) &\geq L(q_{|[0,t_0]},u_{|[0,t_0]}) 
        \geq \int_0^{t_0}|u(t)|_V dt \\
        &\geq \frac{1}{a}\int_0^{t_0}|\xi_q (u)|_{\mathbb{B}} dt = \frac{1}{a}\int_0^{t_0}|\dot{q}(t)|_{\mathbb{B}} dt 
        \geq \frac{1}{a} |q(t_0)|_{\mathbb{B}} =\epsilon'/a
    \end{split}
    \end{equation*}
    Therefore we conclude that $d_V(q_0,q_1)>0$.

    To prove the last point, we first see by Cauchy-Schwarz inequality that for $(q,u)$ a horizontal system, we have :
    $$
    L(q,u)\leq\sqrt{2 E(q,u)}.
    $$
    with equality if and only if $(q,u)$ is constant speed (i.e. if $|u|_V$ is constant). Now for the reverse inequality, we approximate horizontal systems by constant speed reparametrization and show the length stay close. Indeed, let $(q,u)$ horizontal system with $u\in L^1(I,V)$. Then for $\epsilon>0$, we can consider the increasing absolutely continuous bijection $s_{\epsilon}(t)=\left( \int_0^t|u_t|_V + \epsilon t\right)/\left(|u|_{L^1} +\epsilon\right)$, with inverse $s\rightarrow t_{\epsilon}(s)$ such that $t'_{\epsilon}(s)=(|u|_{L^1}+\epsilon)/(|u_{t_{\epsilon}(s)}|_V+\epsilon$. Now we can define $\tilde{u}_s = t'_{\epsilon}(s)u_{t(s)}$ and see that $|\tilde{u}_s|_V\leq |u|_{L^1}+\epsilon$. Therefore $\tilde{u}$ is in $L^{\infty}(I,V)\subset L^2(I,V)$, and we also see that if $\tilde{q}_s = q_{t_\epsilon(s)}$, then :
    $$
    \dot{\tilde{q}}_s = \xi_{\tilde{q}_s} \tilde{u}_s
    $$
    so that $(\tilde{q},\tilde{u})$ is a $AC_{L^2}$ horizontal system with endpoints $q_0$ and $q_1$. We finally have that
    $$
    |\tilde{u}|_{L^2}\leq |u|_{L^1} + \epsilon
    $$
    which concludes the proof. 
  \end{proof}

We finish the section with some geodesic and metric completeness result. 
We first recall the definition of geodesics associated to  the distance $d_V$ :
\begin{defi}[Geodesics in $\mathcal{Q}$]
\label{geod_def}
Let $(q,u)\in AC_{L^2}(I,\mathcal{Q})\times L^2(I,V)$  an horizontal system. Then
\begin{itemize}
    \item  We say that the curve $(q,u)$ is a \emph{geodesic} if it minimizes locally the length, meaning for every $t_0\in I$, and $t_1$ close enough to $t_0$ :
    $$
    L((q,u)_{|[t_0,t_1]})=d_V(q(t_0),q(t_1))\,.
    $$
    \item The curve $(q,u)$ is a \emph{minimizing geodesic} if its total length is equal to the distance between the endpoints.
\end{itemize}
\end{defi}
\begin{rem}
    We already saw in proof of Proposition \ref{dist_on_Q} that if $(q,u)$ is a horizontal system that minimizes the energy, then it's immediately a minimizing geodesic, and is also parametrized with constant speed. Conversely, if $(q,u)$ is a minimizing geodesic parametrized with constant speed, then $(q,u)$ also minimizes the energy, and we have
$$
L(q,u)=\sqrt{2E(q,u)}\,.
$$
\end{rem}

We will need another asumption on the action of the groups $\{G^k,k\}$ to prove that $(\mathcal{Q},d_V)$ is a geodesic metric space, i.e. that we can join any two points of $\mathcal{Q}$ by a minimizing geodesic :
\begin{description}[leftmargin=*]
\item[(S.4)] For every $q\in\mathcal{Q}$, the endpoint mapping $\operatorname{end}_{q} : L^2(I,V)\rightarrow \mathcal{Q}, u\mapsto A_q\circ\operatorname{Evol}_{G^{k_0}}(u)(1)$ is weakly continuous where $\mathcal{Q}$ is equipped with some Hausdorff topology
\end{description}
\begin{rem}
    In most cases, the endpoint mapping $\operatorname{end} : L^2(I,V)\rightarrow G^{k_0}$ in $G^{k_0}$ is weakly continuous with regards to some Hausdorff topology in $G^{k_0}$, and thus we just need to study continuity of mapping $A_q$.
\end{rem}
\begin{theo}[Completeness]
    \label{completeness}
    \begin{enumerate}
        \item The space $\mathcal{Q}$ with distance $d_V$ is metrically complete.
        \item Moreover, if (S.4) is satisfied, then $\mathcal{Q}$ is a geodesic metric space, meaning for $q,q'\in \mathcal{Q}$ such that $d_V(q,q')<\infty$, there exists a minimizing geodesic connecting $q$ and $q'$.
    \end{enumerate}
     
\end{theo}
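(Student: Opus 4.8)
The plan is to treat the two assertions separately, both leaning crucially on the global existence of $\operatorname{Evol}_{\mathcal{Q}}$ established in the evolution theorem above, which guarantees that \emph{any} control $u\in L^2(I,V)$ integrates into a curve defined on all of $I$.

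For metric completeness, I would start from a Cauchy sequence $(q_n)$ for $d_V$ and pass to a subsequence with $d_V(q_n,q_{n+1})\le 2^{-n}$. Using Proposition \ref{dist_on_Q} (equality of the length and energy infima, together with constant-speed reparametrizations), I would pick for each $n$ a horizontal system $(q^n,u^n)$ on $[0,1]$ joining $q_n$ to $q_{n+1}$, with constant speed, so that $\|u^n\|_{L^2}=L(q^n,u^n)\le 2^{-n+1}$. The key construction is to concatenate these into a single control: setting $\tau_n=1-2^{-n}$ and $J_n=[\tau_{n-1},\tau_n]$ of length $\ell_n=2^{-n}$, define $u$ on $J_n$ by the time-reparametrization $u(t)=\ell_n^{-1}u^n\big((t-\tau_{n-1})/\ell_n\big)$. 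A change of variables gives $\int_{J_n}|u|_V^2\,dt=\ell_n^{-1}\|u^n\|_{L^2}^2\le 4\cdot 2^{-n}$, hence $u\in L^2(I,V)$. By global existence, $q=\operatorname{Evol}_{\mathcal{Q}}(u)$ with $q(0)=q_1$ lies in $AC_{L^2}(I,\mathcal{Q})$, and by uniqueness it coincides piecewise with the reparametrized pieces, so $q(\tau_n)=q_{n+1}$. Since $d_V(q(\tau_n),q(1))\le\int_{\tau_n}^1|u|_V\,dt\to 0$ as the tail of an $L^1$ function, the subsequence, and therefore the whole Cauchy sequence, converges to $q(1)\in\mathcal{Q}$.

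For the geodesic property under (S.4), I would run the direct method, exploiting that $E(q^u,u)=\tfrac12\|u\|_{L^2}^2$ depends on the control alone. Writing $d=d_V(q,q')<\infty$, the admissible set $\{u\in L^2(I,V):\operatorname{end}_q(u)=q'\}$ is nonempty and, by Proposition \ref{dist_on_Q}, $d^2=\inf\{\|u\|_{L^2}^2:\operatorname{end}_q(u)=q'\}$. Take a minimizing sequence $(u^n)$; it is bounded in the Hilbert space $L^2(I,V)$, so up to extraction $u^n\rightharpoonup u^\ast$ weakly. Weak lower semicontinuity of the norm yields $\|u^\ast\|_{L^2}\le\liminf\|u^n\|_{L^2}=d$, while (S.4) forces $\operatorname{end}_q(u^\ast)=\lim\operatorname{end}_q(u^n)=q'$, the target topology being Hausdorff and $\operatorname{end}_q(u^n)\equiv q'$. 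Hence $u^\ast$ is admissible, so $\|u^\ast\|_{L^2}\ge d$ and thus $\|u^\ast\|_{L^2}=d$: the infimum is attained. The curve $q^\ast=\operatorname{Evol}_{\mathcal{Q}}(u^\ast)$ then satisfies $L(q^\ast,u^\ast)\le\sqrt{2E(q^\ast,u^\ast)}=\|u^\ast\|_{L^2}=d=d_V(q,q')$, so it is a constant-speed minimizing geodesic from $q$ to $q'$.

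I expect the main obstacle to lie in the second part, namely the passage to the limit of the endpoints along a merely weakly convergent minimizing sequence. This is exactly where (S.4) is indispensable: weak lower semicontinuity only controls the energy from below, and without weak continuity of $\operatorname{end}_q$ one cannot guarantee that the weak limit $u^\ast$ still steers $q$ to $q'$, so the admissible set could fail to be weakly closed and the infimum need not be attained. In the first part the only delicate point is the bookkeeping in the reparametrized concatenation, ensuring the aggregate control is genuinely $L^2$ and that global existence is invoked legitimately; the remaining convergence is a routine tail estimate.
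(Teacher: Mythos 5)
Your proof is correct and follows essentially the same strategy as the paper: for part 1, dyadic concatenation of near-optimal controls into a single control combined with global existence of the evolution map, and for part 2, the direct method (weak compactness of bounded sets in $L^2(I,V)$, weak lower semicontinuity of the norm, and (S.4) to keep the endpoint constraint in the weak limit). The only cosmetic difference is in part 1, where you rescale the concatenated control so that it stays in $L^2(I,V)$ and conclude with a tail estimate, whereas the paper concatenates without changing the norm in $L^1(I,V)$ and invokes completeness of that Banach space to obtain the limiting control.
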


\begin{proof}
        We follow the proof from \cite{articleTro1995} in our more general context. We prove first $(\mathcal{Q},d_V)$ is complete metric space. We consider a Cauchy sequence $(q_n)_{n}$ in $\mathcal{Q}$, and we can suppose $\sum_n d_V(q_n,q_{n+1}) < \infty$. Thus there exists $u_n \in L^1(I,V)$, such that $q_{n+1}=\operatorname{Evol}_{G^{k_0}}(u_n)\cdot q_n$ and such that $|u_n|_{L^1(I,V)}<2d_V(q_n,q_{n+1})$. We define a new sequence $w_n$ in $L^1(I,V)$ such that 
        $$
        w_n = \left\{\begin{array}{l}
             2^{k+1} u_k(2^{k+1} (t-1)+2) \ \mbox{ for} \ t \in [\frac{2^k-1}{2^k}, \frac{2^{k+1}-1}{2^{k+1}}], k\in \{0,1,..,n \} \\
             0 \ \mbox{for} \ t \in [\frac{2^{n+1}-1}{2^{n+1}}, 1] 
        \end{array}\right.
        $$
        Intuitively, we just concatenate the paths $u_n$, so that $q_{n+1}= \operatorname{end}_{q}(w_n)$, and $|w_n|_{L^1(I,V)}=\sum_{k\leq n}|u_k|_{L^1(I,V)}<\infty$ and $|w_{n+1}-w_n| = |u_{n+1}|_{L^1(I,V)}$. Therefore $(w_n)$ is a Cauchy sequence in the Banach space $L^1(I,V)$, and thus converges to $w_{\infty}\in L^1(I,V)$. We denote $q_{\infty}=\operatorname{end}_{q_0}(w_n)\in\mathcal{Q}$. We also see that $q_{\infty}=\operatorname{end}_{q_n}(w_{\infty}-w_{n-1})$, and thus 
        $$
        d_V(q_{\infty},q_n)\leq |w_{\infty}-w_{n-1}| \rightarrow 0
        $$
        Therefore $(\mathcal{Q},d_V)$ is a complete metric space.

        Now we prove the existence of geodesic between points. Let $q,q'\in \mathcal{Q}$ such that $d_V(q,q')<\infty$, and let $(q_n,u_n)$ a minimizing sequence for the energy, with $u_n\in L^2(I,V)$. Since $(u_n)$ is bounded in $L^2$, we can suppose, up to a subsequence, that $(u_n)$ converges weakly towards $u_{\infty}\in L^2(I,V)$. We denote $q_{\infty}$ the horizontal curve such that $\dot{q}_{\infty} = \xi_{q_\infty}u_\infty$. Since the endpoint mapping $\operatorname{end}_q$ is weakly continuous, and $q_n(1)=q'$ for all $n$, then we also have $q_\infty = q'$. Finally, since $|u_\infty|_{L^2}\leq\lim\inf |u_n|_{L^2} = d_V(q,q')$, we get the result.
\end{proof}

\subsection{Sub-Riemannian geodesics and critical points of the energy}

The aim of this section is to characterise geodesics and critical points of the energy in a sub-Riemannian setting, as in \cite{arguillere_trelat_2017}.

We fix $q_0,q_1 \in \mathcal{Q}$. We first define and study the space of horizontal systems connecting $q_0$ and $q_1$ We consider the endpoint mapping 
$$
\operatorname{end}_{q_0} : \left|
\begin{array}{ccl}
   L^2(I,V) \ (\simeq  \operatorname{Hor}_{q_0}(I) )  & \rightarrow & \mathcal{Q}  \\
     u & \mapsto & q^u(1)
\end{array}
\right.
$$
The endpoint mapping is $C^1$ as the evaluation $q\in AC_{L^2}(I,\mathcal{Q})\mapsto q(1)\in \mathcal{Q}$ is smooth (Prop \ref{propA:ev}). The space of horizontal systems with endpoints $q_0$ and $q_1$ is defined as : $$\mbox{Hor}_{q_0,q_1}(I)=\operatorname{end}_{q_0}^{-1}(\{q_1\})$$ This space is not in general a submanifold of $L^2(I,V)$ since the mapping $\operatorname{end}_{q_0}$ is not necessarily a submersion.

We can now define a notion of sub-Riemannian geodesic to characterize the minimizers of the energy. This was described in particular in \cite{arguillere_trelat_2017,arguillere2020sub,https://doi.org/10.48550/arxiv.1504.01767}. Suppose $(q,u)$ is a miminum of the energy with endpoints $q_0$ and $q_1$ (i.e. the curve $(q,u)$ is a minimizing geodesic). Therefore the mapping $u\mapsto (E(u),\operatorname{end}_{q_0}(u))$ is not surjective from an open neighborhood of $u$ onto an open neighborhood of $(E(u),q_1)$. Therefore the differential $(dE(u),d\operatorname{end}_{q_0}(u))$ is also not surjective. This can lead to three different cases whether the mapping $(dE(u),d\operatorname{end}_{q_0}(u))$ has closed range stricly included in $\mathbb{R}\times T\mathcal{Q}$ or whether it is dense. We will focus only on the case of normal geodesics when we can define Lagrange multipliers, as they are in particular the critical points of the inexact matching problem.
\begin{defi}[Sub-Riemannian normal geodesic]
Let $(q,u)\in\mbox{Hor}_{q_0}(I)$ an horizontal system.

We say that $(q,u)$ is a \emph{sub-Riemannian normal geodesic} (or just normal geodesic) if there exists Lagrange multipliers $(\lambda_0,\lambda)\in \mathbb{R}\times T_{q(1)}\mathcal{Q}^*$, with $\lambda_0$ not equal to zero such that
\begin{equation}
    \lambda_0 dE(u)\delta u + \left(\lambda\,|\,d\operatorname{end}_{q_0}(u)\delta u\right)=0
\end{equation}
\end{defi}
\begin{rem} Normal geodesics are in particular geodesics in the sense of definition \ref{geod_def}, i.e. they minimize locally the energy \cite[Theorem 7]{arguillere2020sub}. They correspond to riemannian geodesics in classic riemannian geometry. In the case where there exists non zero Lagrange multipliers with $\lambda_0=0$, we call the horizontal system $(q,u)$ a singular curve (cf. \cite{arguillere2020sub}). Singular curves can also be geodesics, and even normal geodesics \cite[section 5]{montgomery2002tour}
\end{rem}


\subsection{Hamiltonian flow and normal geodesics}
In this part, we want to study the normal geodesics and determine a geodesic equation through the Hamiltonian formulation.

We introduce the Hamiltonian of the system :
$$
\mathcal{H}_{\mathcal{Q}} : 
\left|
  \begin{array}{ccl}
    T\mathcal{Q}^* \times V & \longrightarrow & \R \\
    (q, \ p, \ u) &\longmapsto & \left(p\,|\,\xi_q(u)\right) - \frac{1}{2}|u|_V^2 \\
  \end{array}
\right.
$$
By hypothesis (S.3), the mapping $\xi :  T_eG^{k_0+2}\times Q\to TQ$ given by $(q,u) \mapsto \xi_q (u) = u \cdot q$ is $C^2$.
Moreover, the application $((q,p),(q,X)) \in T\mathcal{Q}^* \oplus_Q T\mathcal{Q} \mapsto \left(p\,|\,X\right) \in \mathbb{R}$ is smooth, and therefore the Hamiltonian $\mathcal{H}_{\mathcal{Q}} : T\mathcal{Q}^* \times V \longrightarrow \R$ is $C^2$. In local coordinates, the partial derivative is given by $\partial_p \mathcal{H}_\mathcal{Q}(q,p,u)$ :
$$
\forall \delta p \in T_q\mathcal{Q}^*, \ \partial_p \mathcal{H}_{\mathcal{Q}}(q,p,u) \delta p= (\delta p\,|\,u\cdot q)
$$
so that $\partial_p \mathcal{H}_{\mathcal{Q}}(q,p,u)\simeq u\cdot q = \xi_q (u) \in T_q\mathcal{Q}$. Therefore there exists a (partial) symplectic gradient $\nabla^{\omega}\mathcal{H}_{\mathcal{Q}}(q,p,u)$ for every $u\in V$ (that we define introducing the Liouville form and its exterior derivative \cite{ARGUILLERE2015139}). In canonical charts of $T\mathcal{Q}^*$, we have :
$$
    \nabla^{\omega}\mathcal{H}_{\mathcal{Q}}(q,p,u) = (\partial_p \mathcal{H}_{\mathcal{Q}}(q,p,u), \ -\partial_q \mathcal{H}_{\mathcal{Q}}(q,p,u))
$$
We get the following result :
\begin{theo}[Hamiltonian flow] Let $q_0,q_1 \in \mathcal{Q}$.

Then an horizontal system $(q,u)\in\operatorname{Hor}_{q_0,q_1}(I)$ is a normal geodesic if and only if there exists $t\mapsto p(t)\in T_{q(t)}\mathcal{Q}^*$ in $AC_{L^1}(I,T\mathcal{Q}^*)$ such that $(q,p,u)$ satisfies the Hamiltonian equations :
\begin{equation}
    \label{Ham_eq}
    \left\{ \begin{array}{ll}
       \left(\dot{q}, \ \dot{p}\right)   =  \nabla^{\omega}\mathcal{H}_{\mathcal{Q}}(q,p,u) \\
        \partial_u\mathcal{H}_{\mathcal{Q}}(q,p,u)  =  0 
    \end{array}
    \right.
\end{equation}
\end{theo}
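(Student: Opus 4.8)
The plan is to read the normal geodesic condition as a first-order (Lagrange multiplier) condition and to convert it into the Hamiltonian system \eqref{Ham_eq} by introducing a costate $p$ transported along $q$ by the adjoint of the linearized flow. Recall that $dE(u)\delta u = \int_I \scal{u(t)}{\delta u(t)}_V\,dt$ and that, by the previous theorem, $d\operatorname{end}_{q_0}(u)\delta u = \delta q(1)$ where $\delta q = T_u\operatorname{Evol}_{\mathcal{Q}}(\delta u)$ solves the linear Cauchy problem \eqref{linq2}. Writing that equation as $\delta\dot q = A(t)\delta q + \xi_{q(t)}\delta u$ with $A(t)=\partial_q(\xi_q(u(t)))_{|q=q(t)}$, the key observation is that $\dot q = \xi_q(u) = \partial_p\mathcal{H}_{\mathcal{Q}}$ is nothing but horizontality, hence already granted; so only the adjoint equation $\dot p = -\partial_q\mathcal{H}_{\mathcal{Q}}(q,p,u)$ and the stationarity $\partial_u\mathcal{H}_{\mathcal{Q}}(q,p,u)=0$ remain to be matched against the variational condition.

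For the direct implication I would normalize $\lambda_0=1$ and define $p\in AC_{L^1}(I,T\mathcal{Q}^*)$ as the unique solution of the \emph{backward} linear Cauchy problem $\dot p = -\partial_q\mathcal{H}_{\mathcal{Q}}(q,p,u) = -A(t)^* p$ with terminal value $p(1)=-\lambda$. Existence and uniqueness follow from the $L^1$-Carathéodory theory for linear ODEs, once one checks that $A\in L^1(I)$: since $V\hookrightarrow T_eG^{k_0+2}$, condition (S.3) makes $\xi$ a $C^2$ map, so $\partial_q\xi$ is continuous and linear in $u$, whence $\|A(t)\|\lesssim|u(t)|_V\in L^2(I)$ along the compact curve $q(I)$. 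The heart of the argument is the duality identity obtained by differentiating $t\mapsto (p(t)\,|\,\delta q(t))$ almost everywhere (a product of two $AC$ curves, computed in a chart): the two $A(t)$-terms cancel and one is left, after integrating and using $\delta q(0)=0$, with $(p(1)\,|\,\delta q(1)) = \int_I (p(t)\,|\,\xi_{q(t)}(\delta u(t)))\,dt$. Substituting into the normal geodesic identity gives $\int_I\big[\scal{u(t)}{\delta u(t)}_V - (p(t)\,|\,\xi_{q(t)}(\delta u(t)))\big]dt = 0$, that is $\int_I \partial_u\mathcal{H}_{\mathcal{Q}}(q,p,u)\delta u\,dt = 0$ for every $\delta u\in L^2(I,V)$. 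Representing $\partial_u\mathcal{H}_{\mathcal{Q}}(q(t),p(t),u(t))$ by its Riesz vector $\xi_{q(t)}^*p(t)-u(t)\in L^2(I,V)$ and taking $\delta u$ equal to it, the fundamental lemma of the calculus of variations forces $\partial_u\mathcal{H}_{\mathcal{Q}}(q,p,u)=0$ a.e.; together with the adjoint equation satisfied by construction and with horizontality, this yields \eqref{Ham_eq}.

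The converse runs the same computation backward. Given $(q,p,u)$ solving \eqref{Ham_eq}, set $\lambda_0=1$ and $\lambda=-p(1)$. Stationarity $\partial_u\mathcal{H}_{\mathcal{Q}}=0$ reads $\scal{u(t)}{\delta u}_V = (p(t)\,|\,\xi_{q(t)}(\delta u))$ pointwise; integrating and invoking the same duality identity (valid because $p$ solves the adjoint equation and $\delta q$ solves \eqref{linq2}) gives $dE(u)\delta u = (p(1)\,|\,\delta q(1)) = -(\lambda\,|\,d\operatorname{end}_{q_0}(u)\delta u)$, which is exactly the normal geodesic condition with $\lambda_0\neq 0$.

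The main obstacle is not the cancellation or the variational lemma, which are routine, but the rigorous handling of the costate in the infinite-dimensional bundle $T\mathcal{Q}^*$: one must make sense of $\dot p = -\partial_q\mathcal{H}_{\mathcal{Q}}$ and of the pairing $(p(t)\,|\,\delta q(t))$ while the base point $q(t)$ moves, which I would do by covering $I$ with finitely many subintervals whose images lie in single charts (as in the very construction of $AC_{L^p}$ curves) and by working in the canonical charts of $T\mathcal{Q}^*$, where the symplectic form and the pairing are the standard ones so that $(\dot q,\dot p)=\nabla^{\omega}\mathcal{H}_{\mathcal{Q}}$ is meaningful and chart-independent. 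The second delicate point, the well-posedness of the backward linear costate ODE in the dual bundle with merely $L^1$ coefficients, is settled by the same Carathéodory results already invoked for \eqref{evol_group} and \eqref{linq}.
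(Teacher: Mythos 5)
Your proof is correct and follows essentially the same route as the paper's: introduce the costate $p$ as the solution of the backward adjoint linear Cauchy problem with terminal value determined by the Lagrange multiplier, use the integration-by-parts/duality identity (in which the $\partial_q(\xi_q u)$ terms cancel) to rewrite the multiplier condition as $\int_I \partial_u\mathcal{H}_{\mathcal{Q}}(q,p,u)\,\delta u\,dt = 0$, and conclude equivalence with the stationarity condition $\partial_u\mathcal{H}_{\mathcal{Q}}=0$. Your additional care on the pointwise conclusion (testing against the Riesz representative $K_V\xi_q^*p - u$) and on the $L^1$ well-posedness of the costate equation only makes explicit steps the paper leaves implicit.
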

\begin{proof}
Let $p_1\in T_{q_1}\mathcal{Q}^*$, and we introduce the mapping :
$$F : u\in L^2(I,V) \mapsto E(u) - \left(p_1\,|\,\operatorname{end}_{q_0}(u)\right)$$

Let $p\in AC_{L^1}(I,T\mathcal{Q}^*)$ be the solution for $u\in L^2(I,V)$ of the following linear Cauchy problem :
\begin{equation}
\label{CLdual}
    \left\{
    \begin{array}{ll}
    \dot{p}(t)=-\partial_q \mathcal{H}_{\mathcal{Q}}(q(t),p(t),u(t))=-\big(\partial_q(\xi_{q} u(t))_{q=q(t)}\big)^*p(t) \\
    p(1)=p_1
    \end{array}
    \right.
\end{equation}
    We now compute the differential of the mapping 
    $F$
    and we prove that for all $\delta u \in L^2(I,V)$,
    $$
    dF(u)\delta u = -\int_I \partial_u\mathcal{H}(q(t),p(t),u(t))\delta u dt
    $$
    Let $\delta u \in L^2(I,V)$, we have 
    \begin{align*}
        dF(u)\delta u &= dE(u)\delta u - \left(p_1\,|\,d\operatorname{end}_{q_0}(u)\delta u\right) \\ &= \int_I\langle u , \delta u \rangle_V dt - \left(p(1)\,|\,d\operatorname{end}_{q_0}(u)\delta u\right)
    \end{align*}
We recall that $\delta q = \partial_u q^u \delta u$ satisfies the linear Cauchy problem \eqref{linq2}

$$
\delta q(0) = 0, \ \delta \dot{q}(t) - \partial_q \left(\xi_{q}u(t)\right)_{|q=q^u(t)}\delta q(t) = \xi_{q^u(t)}(\delta u(t))
$$
with
$$
d\operatorname{end}_{q_0}(u)\delta u = \delta q(1)\,.
$$
Now, as $t\mapsto p(t)$ is also solution of the linear Cauchy equation \eqref{CLdual} and
    using integration by part, we find that
    \begin{align*}
    (p(1)&\,|\,d\operatorname{end}_{q_0}(u)\delta u) = \left(p(1)\,|\,\delta q(1)\right) \\
    &= \left(p(0)\,|\,\delta q(0)\right) + \int_I \left(\dot{p}(t)\,|\,\delta q(t)\right) + \left(p(t)\,|\,\delta\dot{q}(t)\right)dt\\
    &=\int_I-\bigg(\left(\partial_q(\xi_{q} u(t))_{|q=q^u(t)}\right)^*p(t)\,|\,\delta q(t)\bigg)+\bigg(p(t)\,|\,\partial_q \left(\xi_{q}u(t)\right)_{q=q^u(t)}\delta q(t) + \xi_{q^u(t)}(\delta u(t)) \bigg) dt \\
    &=\int_I \left(p(t)\,|\,\xi_{q^u(t)}(\delta u(t))\right) dt 
    \end{align*}
Finally this gives us
\begin{align*}
    dF(u)\delta u &= \int_I\langle u , \delta u \rangle_V dt - \int_I \left(p(t)\,|\,\xi_{q^u(t)}(\delta u(t))\right) dt  \\
    &= -\int_I \partial_u\mathcal{H}(q(t),p(t),u(t))\delta u dt
\end{align*}
Therefore we have the following equivalence :
\begin{equation}
    dE(u) = d\operatorname{end}_{q_0}(u)^*p_1 \iff \forall t,\  \partial_u \mathcal{H}(q(t),p(t),u(t))=0
\end{equation}
which concludes the proof by definition of the normal geodesics.
\end{proof}

\subsection{Inexact matching}

We might want to consider an inexact matching problem by minimizing on the set $\operatorname{Hor}_{q_0}(I)$ :
\begin{equation}
    J(u) = E(u) + g(q^u(1)) = E(u) + g(\operatorname{end}_{q_0}(u))
\end{equation}
where $g:\mathcal{Q}\rightarrow \R$ is a mapping that measures the distance with $q_1$. In this case we minimize on the whole vector space $\operatorname{Hor}_{q_0}(I) \simeq L^2(I,V)$ and we first start by proving the minimum of $J$ is attained :
\begin{prop}[Existence of minimizers for the inexact matching problem]
    Suppose (S.4) is satisfied and $g$ is continuous for the same Hausdorff topology. Then there exists $(q,u)\in \operatorname{Hor}_{q_0}(I)$ such that $J(u)$ is minimal.
\end{prop}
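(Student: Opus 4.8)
The plan is to use the direct method in the calculus of variations, exploiting the weak continuity assumption (S.4) to handle the endpoint term and the weak lower semicontinuity of the energy to handle the penalty. First I would take a minimizing sequence $(q_n, u_n) \in \operatorname{Hor}_{q_0}(I)$ for the functional $J(u) = E(u) + g(\operatorname{end}_{q_0}(u))$. Since the infimum is finite (test with $u = 0$, giving $J(0) = g(q_0) < \infty$), the values $J(u_n)$ are bounded above. The key point is that $g$ can be assumed bounded below after a mild argument, or at least that the energy term cannot blow up along the minimizing sequence; I would argue that $E(u_n) = \tfrac{1}{2}|u_n|_{L^2}^2$ stays bounded. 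This gives a uniform bound $|u_n|_{L^2(I,V)} \leq C$, so by reflexivity of the Hilbert space $L^2(I,V)$ we may extract a subsequence with $u_n \rightharpoonup u_\infty$ weakly in $L^2(I,V)$.

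Next I would pass to the limit in each term. For the energy, since $u \mapsto E(u) = \tfrac{1}{2}|u|_{L^2}^2$ is a continuous convex functional on the Hilbert space $L^2(I,V)$, it is weakly lower semicontinuous, so
\begin{equation}
    E(u_\infty) \leq \liminf_{n\to\infty} E(u_n).
\end{equation}
For the endpoint term, assumption (S.4) gives that $\operatorname{end}_{q_0} : L^2(I,V) \to \mathcal{Q}$ is weakly continuous when $\mathcal{Q}$ carries the prescribed Hausdorff topology; hence $\operatorname{end}_{q_0}(u_n) \to \operatorname{end}_{q_0}(u_\infty)$ in that topology. Combining this with the hypothesis that $g$ is continuous for the same Hausdorff topology yields $g(\operatorname{end}_{q_0}(u_n)) \to g(\operatorname{end}_{q_0}(u_\infty))$.

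Putting the two limits together gives
\begin{equation*}
    J(u_\infty) = E(u_\infty) + g(\operatorname{end}_{q_0}(u_\infty)) \leq \liminf_{n\to\infty} \big( E(u_n) + g(\operatorname{end}_{q_0}(u_n)) \big) = \inf_u J(u),
\end{equation*}
so $u_\infty$ realizes the minimum, and $(q^{u_\infty}, u_\infty) \in \operatorname{Hor}_{q_0}(I)$ is the desired minimizer.

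I expect the main subtlety to be the coercivity step, namely justifying rigorously that $E(u_n)$ stays bounded along the minimizing sequence. If $g$ is merely continuous it could be unbounded below, in which case boundedness of $J(u_n)$ alone does not control $E(u_n)$; one typically needs $g$ bounded below (e.g. $g \geq 0$, which holds when $g$ measures a distance to $q_1$) or a coercivity condition, and I would state this explicitly. Once coercivity is secured, everything else is a clean application of the weak compactness of bounded sets in $L^2(I,V)$, the weak lower semicontinuity of the squared norm, and the weak continuity provided by (S.4); no regularity of $\operatorname{end}_{q_0}$ beyond (S.4) is needed, and in particular the argument does not require $\operatorname{Hor}_{q_0,q_1}(I)$ to be a submanifold.
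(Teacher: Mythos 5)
Your proposal is correct and follows essentially the same route as the paper's proof: a minimizing sequence, weak compactness of bounded sets in $L^2(I,V)$, weak lower semicontinuity of the energy, and assumption (S.4) together with the continuity of $g$ to pass to the limit in the endpoint term. Your caveat about coercivity is well taken---the paper itself silently assumes the minimizing sequence is bounded in $L^2(I,V)$, which indeed requires $g$ to be bounded below (automatic here since $g$ measures a discrepancy with the target $q_1$ and is naturally nonnegative), so your explicit flagging of this point is if anything more careful than the published argument.
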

\begin{proof}
    The proof follows the proof of theorem \ref{completeness}. We introduce a minimizing sequence $(q_n,u_n)\in\operatorname{Hor}_{q_0}(I)$. The sequence $(u_n)$ converges weakly to $u_\infty\in L^2(I,v)$, and we denote $q_\infty\in AC_{L^2}(I,\mathcal{Q})$ such that $\dot q_{\infty} = \xi_{q_\infty}u_\infty$. Since $\operatorname{end}_{q_0}$ is weakly continuous, and $g$ continuous, we get that $g(q_\infty(1)) = \lim g(q_n(1))$. Moreover, the lower semi-continuity of the $L^2$ norm gives 
    \begin{align*}
        J(u_\infty) = |u_\infty|_{L^2} + g(q_\infty(1)) \leq \lim\inf |u_n|_{L^2} + \lim g(q_n(1)) \leq \lim \left( |u_n|_{L^2} + g(q_n(1)) \right)
    \end{align*}
    Hence the result.
\end{proof}

We can also obtain a characterization of the critical points of $J$ on $\operatorname{Hor}_{q_0}(I)$
\begin{theo}[Critical points of $J$] Assume that $g$ is  $C^1$.
    Let $(q,u)\in \operatorname{Hor}_{q_0}(I)$. Then if $(q,u)$ is a critical point of $J$ it is a normal geodesic.
\end{theo}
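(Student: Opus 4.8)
The plan is to differentiate $J$ at the critical point and read off Lagrange multipliers directly from the stationarity condition; no separation of cases is needed here. Since $\operatorname{Hor}_{q_0}(I)\simeq L^2(I,V)$ is a Hilbert space, the statement that $(q,u)$ is a critical point of $J$ means precisely that the Fréchet differential $dJ(u)$ vanishes, i.e. $dJ(u)\delta u=0$ for every $\delta u\in L^2(I,V)$. So I would first record that $J$ is indeed $C^1$: the energy $E(u)=\tfrac12\int_I|u(t)|_V^2\,dt$ is a smooth quadratic form with $dE(u)\delta u=\int_I\langle u(t),\delta u(t)\rangle_V\,dt$; the endpoint mapping $\operatorname{end}_{q_0}$ is $C^1$, being the composition of the $C^1$ map $\operatorname{Evol}_{\mathcal{Q}}$ with the smooth evaluation $\operatorname{ev}_1$; and $g$ is $C^1$ by hypothesis.

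Next I would apply the chain rule to $J=E+g\circ\operatorname{end}_{q_0}$, obtaining for every $\delta u\in L^2(I,V)$
\begin{equation*}
dJ(u)\delta u=dE(u)\delta u+\big(dg(q^u(1))\,\big|\,d\operatorname{end}_{q_0}(u)\delta u\big),
\end{equation*}
where $dg(q^u(1))\in T_{q(1)}\mathcal{Q}^*$ is the covector representing the differential of $g$ at the endpoint $q(1)=\operatorname{end}_{q_0}(u)$, and the bracket is the duality pairing on $T_{q(1)}\mathcal{Q}^*\times T_{q(1)}\mathcal{Q}$.

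The final step is to match this against the definition of a normal geodesic. Setting $\lambda_0=1$ and $\lambda\doteq dg(q^u(1))\in T_{q(1)}\mathcal{Q}^*$, the vanishing of $dJ(u)$ reads
\begin{equation*}
\lambda_0\,dE(u)\delta u+\big(\lambda\,\big|\,d\operatorname{end}_{q_0}(u)\delta u\big)=0\qquad\text{for all }\delta u\in L^2(I,V),
\end{equation*}
which is exactly the defining relation of a sub-Riemannian normal geodesic, with $\lambda_0=1\neq0$; this gives the result.

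I do not expect a genuine analytic obstacle, since the hard work — the $C^1$ regularity and the explicit form of the derivative of the endpoint mapping — is already in place. The only point meriting care is the verification that the multiplier $\lambda_0$ is nonzero, which is automatic here because the energy enters $J$ with the fixed coefficient $1$; this is exactly what rules out the degenerate case $\lambda_0=0$ of singular curves. If one wishes to go further and exhibit the Hamiltonian lift, I would note that $\lambda=dg(q^u(1))$ plays the role of $-p_1$ in the Hamiltonian flow theorem, so that the adjoint momentum $p\in AC_{L^1}(I,T\mathcal{Q}^*)$ solving \eqref{CLdual} with terminal value $p(1)=-dg(q^u(1))$ makes $(q,p,u)$ satisfy the Hamiltonian equations \eqref{Ham_eq}.
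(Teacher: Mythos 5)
Your proof is correct and follows essentially the same route as the paper's (much terser) proof: compute $dJ(u)=dE(u)+dg(q(1))\circ d\operatorname{end}_{q_0}(u)=0$ by the chain rule and read off the Lagrange multipliers $(\lambda_0,\lambda)=(1,dg(q^u(1)))$ against the definition of a normal geodesic. Your closing remark about the Hamiltonian lift is also consistent with the paper's Hamiltonian flow theorem, where $F(u)=E(u)-\left(p_1\,|\,\operatorname{end}_{q_0}(u)\right)$ forces $p(1)=-dg(q^u(1))$; the paper states the momentum as $p(1)=dg(q(1))$ without tracking this sign.
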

\begin{proof}
    Let $(q,u)\in \operatorname{Hor}_{q_0}(I)$ a critical point of $J$, i.e, we have :
    $$
    dJ(u) = dE(u) + dg(q(1))d\operatorname{end}_{q_0}(u) = 0
    $$
    Therefore $(q,u)$ is a normal geodesic with momentum $p(1)=dg\left(q(1)\right)$
\end{proof}

In the following, we will therefore only study the case of normal sub-Riemannian geodesics.

\section{Intermezzo: Multiscale shape space}
As a needed break along the development of the theory and an important illustrative example of the use of GGA framework in a specific setting, we address in this section the case of multiscale shape spaces for registration through the action of the product of diffeomorphisms.

\subsection{Basic framework}
  Let $\boldsymbol{\mathcal{Q}}=\prod_{1\leq l \leq L}\mathcal{Q}_l$ Banach manifold, with $L\geq 0$. We also introduce the group of diffeomorphisms
  $$
  \operatorname{Diff}_{C_0^k}(\R^d)=\left(\id + C_0^k(\R^d,\R^d)\right)\cap\operatorname{Diff}^1(\R^d)
  $$
  where $C_0^k(\R^d,\R^d)$ is the space of $C^k$ mapping whose derivatives up to order $k$ are vanishing at infinity.
  \begin{prop}
    The family of groups $\operatorname{Diff}_{C_0^k}(\R^d)$ satisfies the conditions (G.1-5) p.\pageref{HypGroup}.
\end{prop}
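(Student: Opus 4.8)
The goal is to verify conditions (G.1--5) for the family $G^k = \operatorname{Diff}_{C_0^k}(\R^d)$. The plan is to invoke Proposition \ref{link_halfLie} via the half-Lie group structure already established for $\operatorname{Diff}_{C_0^k}(\R^d)$ in the preceding discussion, where it was shown that this group is a Banach right half-Lie group whose space of $C^l$ elements satisfies $(\operatorname{Diff}_{C_0^k}(\R^d))^l = \operatorname{Diff}_{C_0^{k+l}}(\R^d)$. The essential work, however, is to check directly the regularity statements that make the half-Lie framework applicable and to confirm that the standard composition and inversion estimates on $C_0^k$-diffeomorphisms give exactly the graded regularity demanded by (G.1--5).

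First I would recall the basic functional-analytic facts. The inclusion $\operatorname{Diff}_{C_0^{k+1}}(\R^d) \hookrightarrow \operatorname{Diff}_{C_0^k}(\R^d)$ is smooth since it is the restriction of the continuous linear inclusion $C_0^{k+1}(\R^d,\R^d) \hookrightarrow C_0^k(\R^d,\R^d)$ to an open subset, giving (G.1). For (G.2) and (G.3), the main tool is the Fa\`a di Bruno formula together with the well-known ``loss of derivatives'' estimate: composition $(\varphi,\psi) \mapsto \varphi \circ \psi$ and inversion $\varphi \mapsto \varphi^{-1}$ gain regularity in the more regular argument. Concretely, when the left factor lies in $\operatorname{Diff}_{C_0^{k+l}}(\R^d)$ and the right factor in $\operatorname{Diff}_{C_0^k}(\R^d)$, the map $(g',g) \mapsto g'g$ is $C^l$ into $\operatorname{Diff}_{C_0^k}(\R^d)$ because each of the $l$ derivatives taken in the direction of $g$ costs one derivative of $g'$, which the extra regularity of $g'$ absorbs; smoothness in $g'$ alone with $g$ fixed follows since right composition $\varphi \mapsto \varphi \circ \psi$ is linear. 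An analogous argument handles the inverse map on $\operatorname{Diff}_{C_0^{k+l}}(\R^d) \to \operatorname{Diff}_{C_0^k}(\R^d)$.

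For (G.4) and (G.5) I would pass to the tangent level. The identity tangent space is $T_e G^k = C_0^k(\R^d,\R^d)$, and the right infinitesimal action is $u \cdot g = T_e R_g(u) = u \circ g$, while the left infinitesimal action is $T_e L_g(u) = (dg)\, u$. The $C^l$ regularity of $(u,g) \mapsto u \circ g$ from $C_0^{k+l}(\R^d,\R^d) \times \operatorname{Diff}_{C_0^k}(\R^d)$, smooth in $u$, is again the Fa\`a di Bruno estimate applied to $u \circ g$, giving (G.4); smoothness in $u$ is linearity. Condition (G.5), the $C^1$ regularity of $(g,u) \mapsto (dg)\,u$ on $\operatorname{Diff}_{C_0^{k+1}}(\R^d) \times C_0^k(\R^d,\R^d)$, holds because differentiating the product $(dg)\,u$ once in $g$ produces a term involving $d^2 g$, which is controlled precisely by the one extra order of regularity of $g \in \operatorname{Diff}_{C_0^{k+1}}(\R^d)$.

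The main obstacle, and the place requiring genuine care rather than routine bookkeeping, is the tameness of the composition estimates: one must check that the Fa\`a di Bruno bounds really produce continuity and differentiability \emph{into the target Banach space} $C_0^k$, not merely formal derivative formulas, and in particular that the ``vanishing at infinity'' condition defining $C_0^k$ is preserved under composition and inversion. This is exactly the content for which the half-Lie group results of Bauer--Harms--Michor \cite{bauer2023regularity} and the example discussion in \cite{Marquis2018HALFLIEG} are invoked, so the cleanest route is to cite Proposition \ref{link_halfLie} once the half-Lie structure and the identification $(\operatorname{Diff}_{C_0^k}(\R^d))^l = \operatorname{Diff}_{C_0^{k+l}}(\R^d)$ are in hand, reducing the verification of (G.1--5) to that already-established machinery.
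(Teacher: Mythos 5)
Your proposal is correct and takes essentially the same route as the paper: the paper's own proof simply invokes the Bauer--Harms--Michor half-Lie group machinery (citing \cite{bauer2023regularity} and \cite{kriegl1997convenient}), and Section 2 of the paper has already recorded that $\operatorname{Diff}_{C_0^k}(\R^d)$ is a right half-Lie group with $(\operatorname{Diff}_{C_0^k}(\R^d))^l = \operatorname{Diff}_{C_0^{k+l}}(\R^d)$, so that Proposition \ref{link_halfLie} applies --- which is exactly your main argument. One small correction to your sketch of (G.5): differentiating $(g,u)\mapsto (dg)\,u$ with respect to $g$ in a direction $\delta g$ produces $(d\delta g)\,u$, not a term in $d^2g$ (such a term would lie only in $C_0^{k-1}(\R^d)$ and would actually break the estimate); in fact $(g,u)\mapsto (dg)\,u$ is continuous and bilinear in $(g-\id,u)\in C_0^{k+1}(\R^d,\R^d)\times C_0^k(\R^d,\R^d)$, hence smooth, the one extra order of regularity of $g$ being needed already so that $dg\in C_0^k$ and the product lands in $C_0^k$.
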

\begin{proof}
The group $\operatorname{Diff}_{C^k_0}$ is an open subset of the affine Banach space $\id+C^k_0(\R^d,\R^d)$, and therefore $\bG^k$ is a Banach manifold. Each group $\operatorname{Diff}_{C_0^k}$ satisfies conditions (G.1-5) (cf. \cite{bauer2023regularity}, \cite{kriegl1997convenient} for example), and therefore the group $\operatorname{Diff}_{C^k_0}$ satisfies those conditions too.
\end{proof}
\begin{rem}The group $\operatorname{Diff}_{C_0^k}(\R^d)$ is in particular a half-Lie group.
\end{rem}
  We suppose that the group $\operatorname{Diff}_{C_0^k}(\R^d)$ acts on each layer $\mathcal{Q}_l$ verifying (S.1-3) conditions. We now want to study sub-Riemannian geometries on $\boldsymbol{\mathcal{Q}}$ induced by right-invariant metrics on diffeomorphisms groups to define a multi-scale version of Large Diffeormorphic Metric Mapping (LDDMM). Such approaches were first described in \cite{doi:10.1137/110846324,risser2011simultaneous,sommer2013sparse}. In those papers, the authors introduce a family of kernels $(K^l)_{1\leq l \leq L}$ and define the space of controls as a reproducing kernel Hilbert space (RKHS) for the associated kernel $\sum_l K^l$. The matching problem is shown to be equivalent to
\begin{equation}
    \inf_{v^1,\ldots,v^L} \frac{1}{2}\sum_{l=1}^L\int_I |v^l|^2_{V_l} dt + g(\varphi_1\cdot q^L_0)
    \label{Multiscale1}
\end{equation}
with dynamic
$$
\left\{\begin{array}{l}
     \varphi_0 = \id  \\
     \dot{\varphi}_t = (\sum_{l=1}^L v^l)\circ \varphi_t
\end{array}\right.
$$ where 
we suppose that we have a sequence of continuous embeddings $V_1 \hookrightarrow \cdots V_{L-1}\hookrightarrow  V_L\hookrightarrow  C_0^m(\R^d)$ associated to each kernel $K^l$.
We can therefore introduce the product space $\bV = \prod_l V_l$ equipped with the Hilbert norm defined by $|\bv|^2_{\bV}=\sum_{l=1}^{L}|v^l|^2_{V_{l}}$ for $\bv=(v^l)_{1\leq l <L}\in\bV$. To deal with problem \eqref{Multiscale1}, we introduce another Hilbert norm on $\bV$ given by $|\bu|_A = |A\bu|_{\bV}$ where $A : \bV \rightarrow \bV$ linear isomorphism such that $A\bu = \bv =  (u^1, u^2-u^1,\ldots ,u^{L}-u^{L-1})$. We can now introduce family of groups 
$$
\bG^k = \prod_{1\leq l\leq L} \mbox{Diff}_{C_0^k}(\R^d)
$$ and the matching problem \eqref{Multiscale1} is equivalent to study sub-Riemannian geodesics induced by the norm $|\cdot|_A$ on $\bG^k$, but with endpoint constraints only on the finest scale. We define the Hamiltonian on $\mathcal{Q}_L$ for problem $\eqref{Multiscale1}$ 
\begin{equation}
    \mathcal{H}_{\mathcal{Q}_L} \left(q, p, \bu\right) = \left(p\,|\,\xi_q(u^L)\right) - \frac{1}{2} |A\bu|_{\bV}^2
\end{equation}

We get the following 
\begin{prop}
    \label{Multiscale2}
    Let $t\mapsto (q(t),p(t), \bu(t))$ be a normal geodesic associated to the Hamiltonian $\mathcal{H}_{\mathcal{Q}_L}$, and let $\bv= A\bu$. Then we get 
    $$
    v^l = K^l\left(\xi_{q}^*p \right)
    $$
    and
    $$
    u^L = \sum_{l\leq L}K^l\left(\xi_{q}^*p\right)
    $$
    \label{control_multi1}
\end{prop}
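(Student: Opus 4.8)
The plan is to read off the two identities directly from the stationarity condition $\partial_{\bu}\mathcal{H}_{\mathcal{Q}_L}(q,p,\bu)=0$ that characterizes normal geodesics (the second Hamiltonian equation \eqref{Ham_eq}), combined with the reproducing kernel structure of the spaces $V_l$ and the telescoping definition of $A$. First I would compute the partial differential of $\mathcal{H}_{\mathcal{Q}_L}$ with respect to $\bu=(u^1,\dots,u^L)$. Since only the component $u^L$ enters the pairing term while the quadratic term is $\tfrac12|A\bu|_{\bV}^2$, one obtains for every variation $\delta\bu$
\begin{equation*}
\partial_{\bu}\mathcal{H}_{\mathcal{Q}_L}(q,p,\bu)\,\delta\bu=(\xi_q^* p\,|\,\delta u^L)-\langle A\bu,A\delta\bu\rangle_{\bV},
\end{equation*}
where $\xi_q^* p\in V_L^*$ denotes the pullback of $p$ by the infinitesimal action $\xi_q$, using that $\xi_q$ is linear in its $V$-argument.

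Next I would pass to the variables $\bv=A\bu$ and $\delta\bv=A\delta\bu$. The telescoping definition $A\bu=(u^1,u^2-u^1,\dots,u^L-u^{L-1})$ gives $u^L=\sum_{l\le L}v^l$ and $\delta u^L=\sum_{l\le L}\delta v^l$, and $\langle A\bu,A\delta\bu\rangle_{\bV}=\sum_{l}\langle v^l,\delta v^l\rangle_{V_l}$. Since $A$ is an isomorphism, $\delta\bv$ ranges over all of $\bV$ as $\delta\bu$ does, so the stationarity condition is equivalent to
\begin{equation*}
\sum_{l=1}^{L}(\xi_q^* p\,|\,\delta v^l)-\sum_{l=1}^{L}\langle v^l,\delta v^l\rangle_{V_l}=0,\qquad\text{for all }\delta\bv\in\bV.
\end{equation*}

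The key step is then to invoke the reproducing kernel property of each $V_l$. Because $V_l\hookrightarrow V_L$, the functional $\xi_q^* p\in V_L^*$ restricts to an element of $V_l^*$, and the defining identity of the kernel operator $K^l:V_l^*\to V_l$ yields $(\xi_q^* p\,|\,\delta v^l)=\langle K^l(\xi_q^* p),\delta v^l\rangle_{V_l}$. Substituting, the condition becomes $\sum_{l}\langle K^l(\xi_q^* p)-v^l,\delta v^l\rangle_{V_l}=0$ for all $\delta\bv\in\bV$; as the components $\delta v^l$ are independent and arbitrary, I conclude $v^l=K^l(\xi_q^* p)$ for each $l$. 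Summing over $l$ and using $u^L=\sum_{l\le L}v^l$ then gives $u^L=\sum_{l\le L}K^l(\xi_q^* p)$, which is the second claim.

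I expect the only genuine subtlety to lie in the cross-scale bookkeeping of the third step: the pullback momentum $\xi_q^* p$ lives naturally in the smallest dual space $V_L^*$, and one must use the chain of embeddings $V_1\hookrightarrow\cdots\hookrightarrow V_L$ to legitimately evaluate it against variations $\delta v^l$ belonging to the finer spaces and to apply the scale-$l$ kernel $K^l$ to it. Everything else is the standard extraction of the optimal control from the vanishing of $\partial_{\bu}\mathcal{H}_{\mathcal{Q}_L}$, so no further regularity input beyond the RKHS identifications is needed.
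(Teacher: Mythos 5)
Your proof is correct, and it starts from the same point as the paper's --- the stationarity condition $\partial_\bu\mathcal{H}_{\mathcal{Q}_L}(q,p,\bu)=0$ together with the Riesz/kernel identification in each $V_l$ --- but the bookkeeping is genuinely different. The paper keeps the variations $\delta u^l$ as the free parameters, expands $( L_\bV A\bu\,|\,A\delta\bu)$ and regroups by summation by parts, which produces a triangular system: $L^l v^l - L^{l+1}v^{l+1}=0$ for $1\leq l\leq L-1$ and $\xi_q^*p = L^Lv^L$ (with $L^l=(K^l)^{-1}$), from which $v^l=K^l\xi_q^*p$ follows by propagating the top equation down the chain. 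You instead substitute $\delta\bv = A\delta\bu$, using that $A$ is an isomorphism of $\bV$ so that the components $\delta v^l$ are free and independent; this diagonalizes the quadratic term into $\sum_l\langle v^l,\delta v^l\rangle_{V_l}$ and, via the telescoping identity $\delta u^L=\sum_{l\leq L}\delta v^l$, turns the pairing term into $\sum_l(\xi_q^*p\,|\,\delta v^l)$, so the componentwise identities $v^l=K^l(\xi_q^*p)$ drop out in one step with no intermediate recursion. The two routes are computationally equivalent, but yours makes transparent why all scales carry the same momentum $\xi_q^*p$ (the change of variables decouples the scales), whereas the paper's version exhibits the cross-scale coupling explicitly through the chain $L^lv^l=L^{l+1}v^{l+1}$. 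Your remark that $\xi_q^*p$ lives in $V_L^*$ and must be restricted along the embeddings $V_1\hookrightarrow\cdots\hookrightarrow V_L$ before applying each $K^l$ is exactly the right regularity point, and it is left implicit in the paper.
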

\begin{proof}
The proof is mainly included in \cite{doi:10.1137/110846324}, but we recall it for sake of completeness. We compute $\partial_{\bu}\mathcal{H}_{\mathcal{Q}_L} \left(q, p, \bu \right)$. 
We denote $L^l=(K^l)^{-1}$ for $1\leq l\leq L$, 
and $L_{\bV}:\bV\to \bV'$ the Riesz canonical isometry. Let $\delta \bu \in \bV$ we get :
\begin{align*}
    \partial_\bu\mathcal{H}_{\mathcal{Q}_L} \left(q, p, \bu \right) \delta \bu &= \left(p\,|\,\xi_q(\delta u^L)\right) - \left( L_\bV A\bu\,|\,A\delta \bu\right) \\
    &= \left(p\,|\,\xi_q(\delta u^L)\right) - \sum_{l=2}^{L}(L^l v^l\,|\,\delta u^l - \delta u^{l-1}) - (L^1 v^1\,|\,\delta u^1)\\
    &= \left(p\,|\,\xi_q(\delta u^L)\right) - \sum_{l=1}^{L-1}(L^l v^l - L^{l+1}v^{l+1}\,|\,\delta u^l) - (L^L v^L\,|\,\delta u^L) 
\end{align*}
In particular, the optimality condition $\partial_u\mathcal{H}_{\mathcal{Q}_L} (\delta \bu) = 0$ implies 
$$
\left\{\begin{array}{l}
      L^lv^l - L^{l+1}v^{l+1} = 0, \ 1\leq l\leq L-1 \\
      \xi_q^*p = L^Lv^L  
\end{array}\right.
$$ and therefore $v^l = K^l \xi_q^*p$ for all $1\leq l \leq L$. As $u^L=\sum_{l=1}^L v^l$, we finally get the result.
\end{proof}

More recently in \cite{BME}, the authors study the case where the dynamic for all scale is controlled, i.e. the term $g(\varphi_1\cdot q_0^L)$ is replaced by $g(\boldsymbol{\varphi_1}\cdot\bm{q_0})$ with $\bm{q_0}=(q_l)_l\in \boldsymbol{\mathcal{Q}}$ :

\begin{equation}
    \inf_{v^1,\ldots v^L} \frac{1}{2}\sum_{l=1}^L\int_I |v^l|^2_{V_l} dt + g(\boldsymbol{\varphi_1}\cdot\bm{q_0})
    \label{Multiscale2_eq}
\end{equation}
with dynamic.
$$
\left\{\begin{array}{ll}
     \varphi_0^1=\ldots =\varphi_0^L = \id  \\
     \dot{\varphi}^l_t = (\sum_{k=1}^l v^k)\circ \varphi^l_t & \forall l \leq L
\end{array}\right.
$$ 

This can be reformulated more simply as 
\begin{equation}
  \label{Multiscale2b_eq}
    \inf_{\bu \in \bV} \int_I|A\bu|^2_\bV + g(\boldsymbol{\varphi_1}\cdot\bm{q_0})
\end{equation}
where $\dot{\boldsymbol{\varphi}}=\bu\circ\boldsymbol{\varphi}$. We get for $\bp=(p^l)_{1\leq l\leq L}$ and $\bq=(q^l)_{1\leq l\leq L}$ the corresponding Hamiltonian

\begin{equation}
    \mathcal{H}_{\boldsymbol{\mathcal{Q}}} \left(\bq, \bp, \bu \right) = \sum_{l=1}^L\left(p^l\,|\,\xi_{q^l}(u^l)\right) - \frac{1}{2} |A\bu|_{\bV}^2\,.
\end{equation}

We get the following.
\begin{prop}
    Let $t\mapsto \left(\bq(t),\bp(t),\bu(t)\right)$ a normal geodesic associated to the Hamiltonian $\mathcal{H}_{\boldsymbol{\mathcal{Q}}}$, and let $\bv = A\bu$. Then we get 
$$
\begin{array}{ccc}
v^l = K^l\left(\sum_{m\geq l} \xi_{q^m}^*p^m \right)
     & \mbox{ and } &
     u^l = \sum_{k\leq l}K^k\left(\sum_{m\geq k} \xi_{q^m}^*p^m\right)
\end{array}
$$
    \label{control_multi2}
\end{prop}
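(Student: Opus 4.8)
The strategy mirrors the proof of Proposition~\ref{control_multi1} almost verbatim, the only change being that the momentum now couples across all scales through the sum $\sum_l (p^l\,|\,\xi_{q^l}(u^l))$ rather than touching only the finest scale $u^L$. I would compute $\partial_{\bu}\mathcal{H}_{\boldsymbol{\mathcal{Q}}}(\bq,\bp,\bu)\,\delta\bu$ for an arbitrary variation $\delta\bu=(\delta u^l)_{1\leq l\leq L}\in\bV$, set it to zero (the optimality condition for a normal geodesic), and solve the resulting linear system for $\bv=A\bu$.

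\emph{First step: differentiate the Hamiltonian.} Using the same splitting $|A\bu|_{\bV}^2=\sum_l |v^l|_{V_l}^2$ with $\bv=A\bu$ and the Riesz isometry $L_{\bV}:\bV\to\bV'$, I get
\begin{align*}
    \partial_{\bu}\mathcal{H}_{\boldsymbol{\mathcal{Q}}}(\bq,\bp,\bu)\,\delta\bu
    &= \sum_{l=1}^L \left(p^l\,\middle|\,\xi_{q^l}(\delta u^l)\right) - \left(L_{\bV}A\bu\,\middle|\,A\delta\bu\right)\\
    &= \sum_{l=1}^L \left(\xi_{q^l}^* p^l\,\middle|\,\delta u^l\right) - \sum_{l=1}^L \left(L^l v^l\,\middle|\,\delta u^l-\delta u^{l-1}\right),
\end{align*}
where I adopt the convention $\delta u^0=0$ and $L^l=(K^l)^{-1}$. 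The second step is the reindexing that was already done in Proposition~\ref{control_multi1}: collecting the coefficient of each $\delta u^l$ in the second sum via a discrete summation by parts gives a telescoping $L^l v^l - L^{l+1}v^{l+1}$ for $l<L$ and $L^L v^L$ for $l=L$. The genuinely new ingredient here is that each $\delta u^l$ now also appears in the first sum with coefficient $\xi_{q^l}^* p^l$, so the optimality condition $\partial_{\bu}\mathcal{H}_{\boldsymbol{\mathcal{Q}}}(\delta\bu)=0$ for all $\delta\bu\in\bV$ forces, scale by scale,
$$
\left\{\begin{array}{l}
    L^l v^l - L^{l+1}v^{l+1} = \xi_{q^l}^* p^l, \quad 1\leq l\leq L-1,\\
    L^L v^L = \xi_{q^L}^* p^L.
\end{array}\right.
$$

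\emph{Final step: solve the triangular system.} The system above is solved by backward recursion starting from the top scale $l=L$: one shows by downward induction that $L^l v^l = \sum_{m\geq l}\xi_{q^m}^* p^m$. Indeed the base case $l=L$ is the second equation, and if $L^{l+1}v^{l+1}=\sum_{m\geq l+1}\xi_{q^m}^* p^m$ then the first equation gives $L^l v^l = \xi_{q^l}^* p^l + L^{l+1}v^{l+1}=\sum_{m\geq l}\xi_{q^m}^* p^m$. Applying $K^l=(L^l)^{-1}$ yields $v^l = K^l\big(\sum_{m\geq l}\xi_{q^m}^* p^m\big)$. The formula for $\bu$ then follows from inverting $A$: since $A\bu=\bv$ with $(A\bu)^l=u^l-u^{l-1}$, we have $u^l=\sum_{k\leq l}v^k=\sum_{k\leq l}K^k\big(\sum_{m\geq k}\xi_{q^m}^* p^m\big)$, which is the claim. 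I do not expect any real obstacle; the only point requiring care is the bookkeeping of the two nested sums (over scales $l$ and over the telescoped indices $m$) and the correct handling of the boundary conventions $\delta u^0=0$ and $L^{L+1}v^{L+1}=0$ in the summation by parts.
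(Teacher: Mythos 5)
Your proposal is correct and follows essentially the same route as the paper: the paper's proof computes the same expression for $\partial_{\bu}\mathcal{H}_{\boldsymbol{\mathcal{Q}}}(\bq,\bp,\bu)\,\delta\bu$ and then simply refers to the argument of the single-scale proposition, whereas you spell out the remaining steps (the summation by parts collecting the coefficient of each $\delta u^l$, the resulting triangular system, and its solution by backward recursion together with the inversion of $A$). No gaps; your explicit downward induction $L^l v^l = \sum_{m\geq l}\xi_{q^m}^* p^m$ is exactly the bookkeeping the paper leaves implicit.
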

\begin{proof}
    The proof is similar to the proof of proposition \ref{control_multi1} and follows from equality $\partial_\bu\mathcal{H}_{\boldsymbol{\mathcal{Q}}} = 0$, where
$$
    \partial_\bu\mathcal{H}_{\boldsymbol{\mathcal{Q}}} \left(\bq,\bp,\bu\right)\delta\bu = \sum_{l=1}^L\left(p^l\,|\,\xi_{q^l}(\delta u^l)\right) - \sum_{l=2}^{L}(L^l v^l\,|\,\delta u^l - \delta u^{l-1}) - (L^1 v^1\,|\,\delta u^1)
$$
\end{proof}
\begin{rem}
  Originating from \cite{BME}, the aforementioned setting could be also compared to another multiscale approach coupling hierarchical multiscale image and diffeomorphism decomposition in a registration setting \cite{tadmor2004multiscale,MODIN20191009,debroux2023multiscale}. In the later framework, the approach relies on a greedy sequential decomposition of residuals from coarse to fine scales. The resulting output is a deformation achieved by the composition of a sequence of diffeomorphisms. This approach effectively integrates image and diffeomorphism decomposition within a well-defined setting specifically designed from image registration. However, it does not align with the sub-riemannian approach since the scales are considered sequentially in time ratehr than in parallel. The simultaneous action of all the scales through time is a distinctive feature of the setting \eqref{Multiscale2_eq} and \eqref{Multiscale2b_eq} that allows its integraion into the (GGA) framework and enables the production of time homogeneous trajectories at different resolutions.
\end{rem}
\def\Glie{\bm{G}_0}
\def\fG{\mathfrak{g}_0}
\def\bg{\bm{a}}
\def\bs{\bm{s}}
\def\bpg{\bm{p}_{\bm{a}}}
\def\bK{\bm{K}}
\def\bG{\bm{G}}
\def\bqS{\bm{q_S}}
\def\bqT{\bm{q_T}}
\def\bQ{\boldsymbol{\mathcal{Q}}}
\def\bvphi{\boldsymbol{\varphi}}
\def\Sod{SO(d)}
\subsection{Adding the action of a finite dimensional Lie group}

In classical LDDMM, a first step is often performed to align source and target objects through rigid motions. In the multiscale setting, one can actually add rigid motion alignment as an additional layer, a priori the coarsest layer. However, even for the simple case of rigid motion, the underlying Lie algebra is not embedded in any $C_0^k(\R^d,\R^b)$ Banach spaces since the vanishing conditions at infinity are not fulfilled. More interestingly, the pre-alignment of the target by the action of low dimensional group of symmetries comes essentially to alleviate a nuissance low dimensional group of transformations from the analysis. As a consequence, it is more natural to consider the group product
$$K^k=\bG^k\times \Glie$$
where  $\Glie$ is a finite dimensional Lie group of transformations $\bg\in \Glie$ with the component-wise action on pairs $(\bqS,\bqT)\in\bQ\times\bQ$ given by $(\bvphi,a)\mapsto (\bvphi\cdot \bqS,\bg\cdot \bqT)$  where $\bQ=\prod_{1\leq l \leq L}\mathcal{Q}_l$ . This fits in our general framework~:

\begin{prop}
  \begin{enumerate}
  \item The family of groups $\{\bm{K}^k, k\in \mathbb{N}\}$ satifies
    conditions (G.1-5) p.\pageref{HypGroup}.
  \item If the action of $\Glie$ on $\bQ$ satisfies (S.1-3), then the action of $(\bK^k)_{k\geq 1}$ on $\bQ\times \bQ$ satisties (S.1-3). 
  \end{enumerate}
\end{prop}
\begin{proof}
  (1) The result is straightforward, since $\operatorname{Diff}_{C_0^k}$ satisfies  conditions (G.1-5), and $\Glie$ is finite dimensional Lie groups so immediately satisfies conditions (G.1-5) (here the family would just consists in one group $\{\Glie\}$). The result follows immediately as $\bm{K}^k$ has component-wise law of composition.\\
 (2) follows immediatly since the $\bK^k$ has a component-wise action on $\bQ\times\bQ$.
\end{proof}

We can, considering the Lie algebra $\fG$ of $\Glie$ equipped with a dot product $\langle\,,\,.\rangle_{\fG}$ now define the matching problem including the action of finite dimensional symmetries $\bg\in\Glie$~:
\begin{equation}
    \inf_{(\bu,\bs)\in L^2(I,\bV\times \fG)} \frac{1}{2}\int_I\left( |A\bu(t)|_{\bV}^2 +\left|\bs(t)\right|_{\fG}^2\right)dt + g\left(\left(\boldsymbol{\varphi}(1)\cdot\bm{q_S},\bg(1)\cdot\bm{q_T}\right)\right)
    \label{rigid1}
\end{equation}
with dynamic given by 
$$
\left\{\begin{array}{ll}
     \dot{\bg}(t) = \bs(t)\cdot \bg(t)  \\
     \dot{\varphi}^{\ell}(t) = (\sum_{k=1}^{\ell} v^k)\circ \varphi^{\ell}(t) & \forall \ell \leq L
\end{array}\right.
$$
with $\bs(t)\in\fG$. We can therefore introduce the Hamiltonian of the optimal control problem 
\begin{align*}
    \mathcal{H}_{\boldsymbol{\mathcal{Q}}\times\boldsymbol{\mathcal{Q}}}\left(\left(\bm{q_S},\bm{q_T}\right),\left(\bm{p_S},\bm{p_T}\right),\left(\bu,\bs\right)\right) &= \left(\bm{p_S}|\bu\cdot \bm{q_S}\right) + \left(\bm{p_T}|\bs\cdot \bm{q_T}\right) - \frac{1}{2}\left( |A\bu|_{\bV}^2+ \left|\bs\right|_{\fG}^2\right) 
\end{align*}
with $\bm{q_S}=(q_S^{\ell})_{\ell}, \bm{q_T}=(q_T^{\ell})_{\ell}\in\boldsymbol{\mathcal{Q}}$, $\bm{p_S}=(p_S^{\ell})_{\ell}, \bm{p_T}=(p_T^{\ell})_{\ell}\in\boldsymbol{\mathcal{Q}}^*$ and $\bu=(u^{\ell})_{1\leq \ell \leq L}\in \prod_{1\leq \ell \leq L}T_eG^k$. Equivalently, we can assimilate $\bm{q_T}$ with finite dimensional motion $\bg$ (through $\bm{q_T}(t)=\bg(t)\cdot \bm{q_T}(0)$) and get the equivalent Hamiltonian :
\begin{equation*}
 \mathcal{H}_{\bQ\times\Glie}(\bm{q_S},\bg),\left(\bm{p_S},\bpg\right),\left(\bu,\bs\right))=\mathcal{H}_{\boldsymbol{\mathcal{Q}}}\big(\bm{q_S},\bm{p_S},\bu\big)+ \mathcal{H}_{\Glie}(\bg, \bpg, \bs)
 \end{equation*}
 with
 \begin{equation}
   \label{eq:10.12.1}
  \mathcal{H}_{\boldsymbol{\mathcal{Q}}}\left(\bm{q_S},\bm{p_S},\bu\right) = \sum_{l=1}^{L}\left(p_S^{\ell}|\xi_{q^{\ell}}(u^{\ell})\right)  - \frac{1}{2}|A\bu|_{\bV}^2  \,\text{ and }\, \mathcal{H}_{\Glie}(\bg, \bpg, \bs)=(\bpg\,|\,\bs\cdot\bg)-\frac{1}{2} \left|\bs\right|_{\fG}^2\,.
 \end{equation}
\def\SOd{SO(d)}
\def\Simd{\operatorname{Sim}^+(\R^d)}
\def\fsimd{\mathfrak{sim}(\R^d)}
\def\fsod{\mathfrak{so}(d)}
\subsubsection{The example of landmarks}
We apply the previous approach on the space of landmarks. We define the multiscale space as $\bQ=\prod_{1\leq \ell \leq L}(\R^d)^{I_{\ell}}$ with $(I_{\ell})_{\ell}$ an increasing sequence of index sets, and the following classical action :
$$
\varphi^{\ell}\cdot (q_i^{\ell}) = \left(\varphi^{\ell}(q_i^{\ell})\right)  \ \mbox{for } \ell\geq 1
$$
We consider the case $\Glie=\Simd=\R^*\times \SOd\times\R^d$ the group of orientation-preserving scaling, translation and rotations with the product $\bg\bg'=(\rho\rho',RR',\tau+\tau')$ for $\bg=(\rho,R,\tau)$ and $\bg'=(\rho',R',\tau')$ and the group action of on $(\R^d)^{I_{\ell}}$ defined by~:
$$
(\rho,R,\tau)\cdot q_i^{\ell} = \rho R(q_i^{\ell}-q_c^{\ell})+q_c^{\ell}+\tau  \ \mbox{for } i\in I_{\ell}
$$
with $q_c^{\ell}=\frac{1}{|I_{\ell}|}\sum_{i\in I_{\ell}}q_i^{\ell}$ the center of mass. We suppose $\Simd$ is equipped with the standard right-invariant metric associated with the metric on $\fG=\fsimd=\R\times\fsod\times\R^d$~:
$$
\langle (\alpha,r,\sigma),(\alpha',r',\sigma')\rangle_{\fsimd} = \alpha\alpha' +\langle r,r'\rangle +\langle\sigma, \sigma'\rangle
$$
where $\alpha,\alpha'\in \R$, $\sigma,\sigma'\in \R^d$, and $r,r'\in \fsod$ skew-symmetric matrices and $\langle r,r'\rangle=\text{Tr}(r^Tr')$.
We take a $L^2$ endpoint constraints $g(\bm{q},\bm{q'})= \frac{1}{2} \sum_{l=0}^{\ell}\sum_{i\in I_{\ell}}|q_i^{\ell}-q_i'^{\ell}|^2$. The control problem becomes 

\begin{equation}
  \begin{split} \mathcal{H}_{\bQ}\left(\bm{q},\bm{p},\bu\right) = \sum_{l=1}^{L}\sum_{i\in I_{\ell}}&\left(p_{i}^{\ell}|u^{\ell}(q_{i}^{\ell})\right) -\frac{1}{2}|A\bu|_{\bV}^2\,.\\
  \mathcal{H}_{\Simd}(\bg,\bpg,\bs)& = (p_\rho|\alpha\rho)+ \left(p_R|rR\right)- \frac{1}{2} |\bs|_{\fsimd}^2+\left(p_{\tau}|\sigma\right) 
\end{split}
\end{equation}
We recall the solutions of the optimal control problem satisfies the hamiltonian equations
$$
\left\{\begin{array}{l}
     \dot{\bm{q}} =  \partial_{\bm{p}}\mathcal{H}_{\bQ}(\bm{q},\bm{p},\bu),\ \dot{\bg}=\partial_{\bpg}\mathcal{H}_{\Simd}(\bg,\bpg,\bs)\\
         \dot{\bm{p}}=  -\partial_{\bm{q}}\mathcal{H}_{\bQ}(\bm{q},\bm{p},\bu),\ \dot{\bpg}=-\partial_{\bg}\mathcal{H}_{\Simd}(\bg,\bpg,\bs)\\
      \partial_{\bu} \mathcal{H}_{\bm{Q}}(\bm{q},\bu)=0,\ \partial_{\bs}\mathcal{H}_{\Simd}(\bg,\bpg,\bs)=0
\end{array}\right.
$$
\begin{prop}
    Let $\bm{q}_S,\bm{q}_T \in \boldsymbol{\mathcal{Q}}$ be the source and target objects and $\bm{e}=(1,0,\text{I}_d)$ be the neutral element of $\Glie$. Then the solution $t\mapsto \left( (\bm{q}(t),\bg(t)),(\bm{p}(t),\bpg(t))\right)$ of the optimal problem starting from $(\bm{q}_S,\bm{e})$ with endpoint constraints satisfies 
    $$
\left\{\begin{array}{l}
     \dot{q}_{i}^{\ell} = u_t^{\ell}(q_{i}), \ \ell \geq 1 \\
     \dot{p}_{i}^{\ell} = -\left(du_t^{\ell}(q_{i}^{\ell})\right)^*p_{i}^{\ell} \\
    \dot{\rho}=\alpha \rho, \dot{R} = r R, \dot{\tau} = \sigma \\
     (\dot{p}_{\rho}, \dot{p}_{R},\dot{p}_{\tau}) = (-\alpha p_{\rho},-r^Tp_{R},0) \\     
\end{array}\right.
$$
with $\alpha \in \R$, $r\in\fsod$, $\sigma\in\R^d$ and $\bu\in\bV$ satisfying 
$$
    u^{\ell} = \sum_{k\leq \ell}\bm{K}^k\left(\sum_{m\geq k}\sum_{i\in I_m} \delta_{q_{i}^m}^{p_i^m}\right)
    $$
    where $\delta_x^y\in C_0(\R^d,\R^d)'$ is defined by $(\delta_x^y\,|\, u)=\langle u(x),y\rangle$ for $x,y\in\R^d$.
Moreover, we have the following endpoint conditions for the coadjoint map
\begin{equation}
\label{endpoint_eq}
\left\{\begin{array}{lcl}
         p_{i}^{\ell}(1) &= &- q_{i}^{\ell}(1) + (\rho R)(1) (q_{T,i}^{\ell}-q_{T,c}^{\ell})+q_{T,c}^{\ell}+\tau(1), \ \ell \geq 1 \\
          p_{\rho}(1)&=&  -\sum_{\ell=1}^L\sum_{i\in I_{\ell}}p_{i}^{\ell}(1) (q_{T,i}^{\ell}-q_{T,c}^{\ell})^TR^T(1)\\
         p_{R}(1)& = &-\rho(1)\sum_{\ell=1}^{L}\sum_{i\in I_{\ell}}p_{i}^{\ell}(1) (q_{T,i}^{\ell}-q_{T,c}^{\ell})^T\\
    p_{\tau}(1) &=& -\sum_{\ell=1}^{L}\sum_{i\in I_{\ell}} p_{i}^{\ell}(1)
\end{array}\right.
\end{equation}
\end{prop}
\begin{rem}
  In the proposition and in the derivation of \eqref{endpoint_eq}, we encode $R$, $p_R$ and $r$ as $d\times d$ matrices and consider $H_{\Simd}$ as a function the variables $\bg\in \R^*\times\R^d\times \R^{d\times d}, \bpg= \R\times\R^d\times \R^{d\times d}$ and  $\bs\in \R\times\R^d\times \fsod$ with $\fsod\simeq \{r\in \R^{d\times d}\ |\ r+r^T=0\}$, in other words we consider the extension of the smooth action of $\Simd$ to $\R^*\times \R^{d\times d}\times \R^d$ that coincides with its restriction to $\Simd$.
\end{rem}
\begin{proof}
    The differential equations  $\dot{q}_{i,t}^{\ell} = u_t^{\ell}(q_{i,t})$ and $
     \dot{p}_{i,t}^{\ell} = -\left(du_t^{\ell}(q_{i,t}^{\ell})\right)^*p_{i,t}^{\ell}$ follow directly from the computation of the derivatives $\partial_p\mathcal{H}_{\bQ}$ and $\partial_q\mathcal{H}_{\bQ}$.  Furthermore, similarly to propositions \ref{Multiscale2} and \ref{control_multi2}, solving $\partial_\bu \mathcal{H}_{\bQ} = 0$ gives us $u^{\ell}(t) = \sum_{1\leq k\leq \ell}K^k\left(\sum_{L\geq m\geq k}\sum_{i\in I_m} \delta_{q_{i}^m(t)}^{p^m_i(t)}\right)$. 
The hamiltonian equations for the $\Simd$ part 
can be simply written as
$$
\left\{\begin{array}{l}
     (\dot{\rho},\dot{R},\dot{\tau})= (\alpha\rho, rR_T,\sigma)\\
      (\dot{p}_\rho,\dot{p_R},\dot{p_\tau}) = (-\alpha p_\rho,-r^Tp_{R}, 0) \\
      \alpha = \rho p_\rho,\ \sigma = p_{\tau}, \\
      r=(p_RR^T-Rp_R^T)/2
      \end{array}\right.
$$
One checks immediatly that  $\rho p_\rho$, $R^Tp_R$ and $p_\tau$ are conserved quantities during the dynamic (also a consequence of the right invariance of $H_{\Simd}$ and Noether's theorem) so that  $\alpha$, $\sigma$ and $R^TrR$ are conserved. In particular $\dot{R}(t)=R(t)r(0)=r(0)R(t)$ and $R(t)=\exp_{\Sod}(r(0)t)$. 
\\

We finish by proving equations \eqref{endpoint_eq}, which follows from the endpoint conditions. Indeed, for the constraint $g(\bm{q},\bg) = \frac{1}{2}\sum_{\ell=0}^{L}\sum_{i\in I_{\ell}} \left|q_i^{\ell} - \rho R(q_{T,i}^{\ell}-q_{T,c}^{\ell})-q_{T,c}^{\ell}-\tau\right|^2$, we have
\begin{equation}
    (\bm{p}(1),\bpg(1)) = - dg(\bm{q}(1),\bg(1))
\end{equation}
For $\ell\geq 1$, $i\in I_{\ell}$, and $\delta q_i^{\ell}\in \R^d$ we then have
\begin{align*}
    \langle p_{i,1}^{\ell},\delta q_i^{\ell}\rangle &= - \partial_{q_i^{\ell}}g(\bm{q}(1),\bg(1))\delta q_i^{\ell} \\
    &= - \langle q_{i}^\ell(1)- (\rho R)(1)(q_{T,i}^{\ell}-q_{T,c}^{\ell})-q_{T,c}^{\ell}-\tau(1) , \delta q_i^{\ell} \rangle
\end{align*}
and therefore $p_{i}^{\ell}(1) = - q_{i}^\ell(1) + (\rho R)(1)(q_{T,i}^{\ell}-q_{T,c}^{\ell})+q_{T,c}^{\ell}+ \tau(1)$. 
\begin{align*}
    \langle p_{R}(1),\delta R\rangle  &= - \partial_{R}g(\bm{q}_1,\tau_1,R_1)\delta R \\
    &= - \sum_{\ell=1}^L\sum_{i\in I_{\ell}} \langle  q_{i}^\ell(1)- (\rho R)(1)(q_{T,i}^{\ell}-q_{T,c}^{\ell})-q_{T,c}^{\ell}-\tau(1) , -\rho \delta R (q_{T,i}^{\ell}-q_{T,c}^{\ell}) \rangle \\
    &= -\rho(1)\sum_{\ell=1}^L\sum_{i\in I_{\ell}} \langle p_{i}^{\ell}(1),\delta R (q_{T,i}^{\ell}-q_{T,c}^{\ell})\rangle \\
    &= -\rho(1)\langle \sum_{\ell=1}^L\sum_{i\in I_{\ell}}p_{i}^{\ell}(1) (q_{T,i}^{\ell}-q_{T,c}^{\ell})^T, \delta R\rangle
\end{align*}
Similarly, we get $ p_{\rho}(1)=  -\sum_{\ell=1}^L\sum_{i\in I_{\ell}}p_{i}^{\ell}(1) (q_{T,i}^{\ell}-q_{T,c}^{\ell})^TR^T(1)$ and 
$p_\tau(1)= -\sum_{\ell=1}^L\sum_{i\in I_{\ell}} p_{i}^{\ell}(1)$
Hence the proposition.
\end{proof}

\section{Euler-Poincaré equations}
In this part, we are back to the general (GGA) framework of a graded group structure acting on a Banach manifold introduced in section \ref{sec:sub-riem}~:
We study the Hamiltonian equations for the normal sub-Riemannian geodesics. In particular, the lift of the geodesics on the groups $G^{k_0}$ satisfies an integrated version of the Euler-Poincaré equations, and we prove this equation admits a unique and global solution. This means the geodesics on the Banach space $\mathcal{Q}$ is totally determined by the geodesics in the groups. In practice, this allows us to describe shapes with different spaces while keeping the same geodesics (cf. corollary \ref{cor:19.6.1}), and we develop some examples of applications in the next section. 

\subsection{Induced metrics on Banach manifold}
We recall that $V$ is a Hilbert space continuously included in $T_eG^{k_0+2}$ and that the map $K_V : V^*\rightarrow V$ denotes the inverse of the Riesz isometry map on $V$. We consider again the $C^2$ Hamiltonian given by :
$$
\mathcal{H}_{\mathcal{Q}} : 
\left|
  \begin{array}{ccl}
    T\mathcal{Q}^* \times V & \longrightarrow & \R \\
    (q, \ p, \ u) &\longmapsto & (p\,|\,u\cdot q) - \frac{1}{2}|u|_V^2 \\
  \end{array}
\right.
$$
In this section we are interested on the normal Hamiltonian equations :
\begin{equation}
\label{normHamEq}
    \left\{
\begin{array}{ll}
 \left(\dot{q}, \ \dot{p}\right)   =  \nabla^{\omega}\mathcal{H}_{\mathcal{Q}}(q,p,u) \\
     \partial_u\mathcal{H}(q,p,u)=0
\end{array}
\right.
\end{equation}

We will need the momentum map associated to a couple $(q,p)\in T\mathcal{Q}^*$ :
$$
m_{\mathcal{Q}} : \left|
  \begin{array}{ccl}
    T\mathcal{Q}^* & \longrightarrow & (T_eG^{k_0})^* \\
    (q, \ p) &\longmapsto & m_{\mathcal{Q}}(q,p) \\
  \end{array}
\right.
$$
where for $v\in T_eG^{k_0}$, $$\left(m_{\mathcal{Q}}(q, p)\,|\,v\right) = (p\,|\,\xi_q(v)) = (p\,|\,v\cdot q)\,.$$
By (S.2), for $(q,p)\in T\mathcal{Q}^*$, the infinitesimal action $\xi_q : T_eG^{k_0}\rightarrow T\mathcal{Q}$ is continuous, so that
$$m_\mathcal{Q}(q,p)=\xi_q^*(p)$$
and for $l\geq 0$ the momentum map $m_{\mathcal{Q}}(q,p)$ restricts to a continuous form in $(T_eG^{k_0+l})^*$ and $V^*$. 
The last condition of equation \eqref{normHamEq}
$$
\partial_u\mathcal{H}(q,p,u) = 0 = (p|\xi_q(.))-\langle u,\cdot\rangle_V
$$
has a unique solution $u(q,p) = K_V \xi_q^*(p)=K_Vm_{\mathcal{Q}}(q,p)$, where to apply $K_V$ we are doing a slight abuse of notation by considering $m_{\mathcal{Q}}(q,p)$ as an element of $V^*$ through its continuous restriction to $V$.
Since $\mathcal{H}$ is strictly concave in $u$, we also have that $u(q,p) = \max_{u\in V}\mathcal{H}(q,p,u)$. As the restriction $\xi : V \times \mathcal{Q}\rightarrow T\mathcal{Q}$ is a $C^2$ mapping (S.3), the mapping $q\in\mathcal{Q}\mapsto {\xi_q}_{|V} \in \mathcal{L}(V,T_q\mathcal{Q})$ is $C^1$ with locally lipschitz derivative, and therefore $(q,p)\mapsto \xi_q^*(p)_{|V} \in V^*$ is $C^1$ with locally lipschitz derivative. Finally the mapping
$$
u : \left|
  \begin{array}{ccl}
    T\mathcal{Q}^* & \longrightarrow & V \\
    (q,p) &\longmapsto & K_V \xi_q^*(p) \\
  \end{array}
\right.
$$
is $C^1$ with locally lipschitz derivative. This allows to introduce the reduced Hamiltonian given by :
$$
h(q,p) = \mathcal{H}(q,p,u(q,p)) = \frac{1}{2} |u(q,p)|_V^2
$$ which is also $C^1$ with locally lipschitz derivative by composition. Its symplectic gradient is given in charts by 
$$
\nabla^{\omega}h(q,p) = \left(\partial_p h(q,p), -\partial_q h(q,p) \right) = \nabla^{\omega}\mathcal{H}(q,p,u(q,p)),
$$
so that equation \eqref{normHamEq} reduces to solving
\begin{equation}
\label{redHamEq}
 \left(\dot{q}, \ \dot{p}\right)   =  \nabla^{\omega}h(q,p) 
\end{equation}
From this equation we then retrieve the velocity $u_t = K_V \xi_{q_t}^*(p_t)$. We get the following theorems
under the (GGA) framework.
\begin{prop}[Existence and uniqueness of the Hamiltonian flow in $\mathcal{Q}$]
\label{prop:19.6.1}
We have global existence and uniqueness of the geodesic equation :
\begin{equation}
\label{redHamEq2}
\left\{
\begin{array}{ll}
    \dot{q} \ = \partial_{p} h(q,p) \\
     \dot{p} = -\partial_{q}h(q,p)
\end{array}
\right.
\end{equation}
with initial value $(q_0,p_0) \in T\mathcal{Q}^*$. This Hamiltonian flow $t \mapsto (q_t, p_t)$ is $C^1$ on $T\mathcal{Q}^*$.
\end{prop}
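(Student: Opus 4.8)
The plan is to prove global existence and uniqueness of the Hamiltonian flow by first establishing local existence and uniqueness via the Picard-Lindelöf theorem, and then upgrading the local solution to a global one using an energy-conservation argument combined with the metric completeness of $(\mathcal{Q},d_V)$ established in Theorem \ref{completeness}. The key structural fact we rely on, already proved in the preceding discussion, is that the reduced Hamiltonian $h(q,p)=\tfrac12|u(q,p)|_V^2$ is $C^1$ with \emph{locally Lipschitz} derivative; consequently its symplectic gradient $\nabla^\omega h$ is a locally Lipschitz vector field on $T\mathcal{Q}^*$. This is exactly the regularity required by the Banach-space Picard-Lindelöf theorem (e.g.\ \cite[Theorem C.6]{Younes2019}) to guarantee, for each initial condition $(q_0,p_0)\in T\mathcal{Q}^*$, a unique maximal $C^1$ solution $t\mapsto(q_t,p_t)$ of \eqref{redHamEq2} defined on a maximal open interval $[0,T_{\max})$.

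The main obstacle is ruling out finite-time blow-up, i.e.\ showing $T_{\max}=+\infty$ (or that the solution extends to all of $I=[0,1]$). Here I would exploit conservation of energy. Since $h$ does not depend explicitly on time, along any solution of the Hamiltonian system we have $\tfrac{d}{dt}h(q_t,p_t)=(\partial_q h\,|\,\dot q)+(\partial_p h\,|\,\dot p)=(\partial_q h\,|\,\partial_p h)-(\partial_p h\,|\,\partial_q h)=0$, so $h(q_t,p_t)\equiv h(q_0,p_0)$. Because $h(q,p)=\tfrac12|u(q,p)|_V^2$, this bounds the control $|u_t|_V=|u(q_t,p_t)|_V\equiv\sqrt{2h(q_0,p_0)}=:c$ uniformly in $t$. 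The horizontal curve $t\mapsto q_t$ then satisfies $\dot q_t=\xi_{q_t}(u_t)$ with $|u_t|_V\le c$, so $(q,u)_{|[0,t]}$ is a horizontal system of length at most $ct$; by Proposition \ref{dist_on_Q} this yields $d_V(q_0,q_t)\le ct$, and since $(\mathcal{Q},d_V)$ is complete (Theorem \ref{completeness}), $q_t$ cannot escape to infinity in the metric $d_V$ in finite time.

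The delicate point is that a $d_V$-bound alone does not immediately prevent blow-up in the \emph{intrinsic} (Banach manifold) topology, since $d_V$ induces a topology that is only \emph{weaker} than the intrinsic one (Proposition \ref{dist_on_Q}); nor does it directly control the momentum coordinate $p_t$. To close this gap I would argue as follows. First, for the $q$-component I would either pass through the group: writing $q_t=A(g_t,q_0)$ with $g_t=\Evol_{G^{k_0}}(u)(t)$ as in the Evolution theorem in $\mathcal{Q}$, the uniform bound $|u_t|_V\le c$ together with $V\hookrightarrow T_eG^{k_0+2}$ and Proposition \ref{reg_groups} gives a well-defined global lift $g\in AC_{L^2}(I,G^{k_0})$, so $q_t$ stays in a bounded, controlled region. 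For the $p$-component, along the flow $p_t$ solves the linear adjoint equation $\dot p_t=-\big(\partial_q(\xi_q u_t)_{|q=q_t}\big)^*p_t$ (cf.\ \eqref{CLdual}); since $u_t$ is bounded in $V$ and $q_t$ stays in a compact $d_V$-region, the operator norm of $\big(\partial_q(\xi_q u_t)\big)^*$ is bounded by some $M$ on the relevant region, whence Grönwall's inequality yields $|p_t|\le|p_0|e^{Mt}$, a finite bound on any compact time interval. With both coordinates thus a priori bounded on $[0,T_{\max})$ and $\nabla^\omega h$ locally Lipschitz, the standard escape-time criterion forces $T_{\max}=+\infty$, and in particular the flow is defined on all of $I$. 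Uniqueness and the $C^1$ regularity of $t\mapsto(q_t,p_t)$ are then immediate from the local Picard-Lindelöf statement and the locally Lipschitz regularity of the vector field.
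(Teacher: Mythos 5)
Your overall strategy is the same as the paper's: local existence and uniqueness from the Picard--Lindel\"of theorem applied to the locally Lipschitz symplectic gradient of $h$, conservation of $h$ along the flow giving $|u_t|_V$ constant, a lift of the trajectory to the group to control the $q$-component, a Gr\"onwall estimate on the linear adjoint equation for the $p$-component, and finally extension of the solution past a putative finite blow-up time. One remark before the main point: the appeal to metric completeness of $(\mathcal{Q},d_V)$ (Theorem \ref{completeness}) is a red herring --- as you yourself observe, the $d_V$-topology is too weak to prevent intrinsic blow-up, and indeed completeness plays no role in your closing argument, nor in the paper's proof.

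There is, however, one step whose justification is genuinely flawed: the bound on the operator norm in the Gr\"onwall argument. You write that since ``$q_t$ stays in a compact $d_V$-region, the operator norm of $\big(\partial_q(\xi_q u_t)\big)^*$ is bounded by some $M$ on the relevant region.'' This does not work, for the very reason you flagged in the previous sentence: $d_V$-balls are not compact (the space is infinite dimensional), and the $d_V$-topology is \emph{weaker} than the manifold topology (Proposition \ref{dist_on_Q}), whereas $q\mapsto \partial_q(\xi_q(u))$ is only continuous with respect to the manifold topology, read in charts. No amount of $d_V$-control can therefore bound this operator norm. The repair is available from your own group-lift step, and it is exactly what the paper does. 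Since $|u_t|_V\equiv c$ on the maximal interval $[0,b)$, the control extends to an element of $L^\infty([0,b],V)$, so Proposition \ref{reg_groups} (with $V\hookrightarrow T_eG^{k_0+2}$) yields the lift $g$ on the \emph{closed} interval $[0,b]$; hence $g_t\to g_b$ in $G^{k_0+1}$ as $t\to b$, and by (S.1) the convergence $q_t\to q_b\doteq A(g_b,q_0)$ holds in the manifold topology of $\mathcal{Q}$ (the paper obtains the same convergence by a chart argument at the identity, translated along the trajectory). One then fixes a single chart around the \emph{limit point} $q_b$: continuity of $(q,u)\mapsto \partial_q(\xi_q(u))$ in that chart, together with linearity in $u$, gives $\lVert\big(\partial_q(\xi_q(u_t))_{|q=q_t}\big)^*\rVert\leq L\,|u_0|_V/\delta$ for $t$ close to $b$, and Gr\"onwall then bounds $p_t$, hence also $\dot p_t$. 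Finally, note that in a Banach manifold mere a priori boundedness of $(q_t,p_t)$ is \emph{not} enough for your ``standard escape-time criterion'': closed bounded sets are not compact, and a locally Lipschitz field need not be uniformly Lipschitz on bounded sets. What closes the argument --- and what the bounds on $\dot q_t$ and $\dot p_t$ actually give --- is that $(q_t,p_t)$ is Lipschitz in $t$ near $b$, hence converges to a point $(q_b,p_b)\in T\mathcal{Q}^*$, at which local existence restarts the flow, contradicting maximality of $b$.
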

This Hamiltonian flow can be uniquely lifted as an Hamiltonian flow in the space $T^*G^{k_0+1}$ :
\begin{theo}[Lifted trajectory and integrated Euler-Poincaré equations]
\label{theo:int_epdiff}
Let $t\mapsto (q_t,p_t)\in C^1(I,T\mathcal{Q}^*)$ be a Hamiltonian flow of \eqref{redHamEq2} and let $t\mapsto m_t\doteq m_{\mathcal{Q}}(q_t,p_t)$ be the associated momentum trajectory in $(T_eG^{k_0})^*$. Then:
\begin{enumerate}
\item there exists a trajectory $g \in C^1(I,G^{k_0+1})$ solution of 
$\dot{g}_t=(K_V m_t)\cdot g_t$ and satisfying $q_t=g_t\cdot q_0$ and hereafter called \emph{lifted trajectory};
\item the momentum map trajectory verifies
\begin{equation}
\label{int_epdiff2}
    m_t = \operatorname{Ad}_{g_t^{-1}}^*(m_0)\,;
\end{equation}
\item the lifted trajectory $t\mapsto g_t$ is uniquely defined as the solution in $G^{k_0+1}$ starting at $e_{G^{k_0+1}}$ of the autonomous differential system
\begin{equation}
\label{int_epdiff}
    \dot{g} = G_{m_0}(g)\doteq K_V\operatorname{Ad}_{g^{-1}}^*(m_0)\cdot g
\end{equation}
with $G_{m_0}:G^{k_0+1}\to TG^{k_0+1}$ locally Lipschitz.
\end{enumerate}
\end{theo}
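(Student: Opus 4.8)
The plan is to establish the three claims in sequence, using the momentum map $m_t = \xi_{q_t}^*(p_t)$ as the bridge between the reduced flow on $T\mathcal{Q}^*$ and the lifted flow on the group. First I would construct the lifted trajectory. Since $t\mapsto(q_t,p_t)$ is $C^1$ and the control is $u_t = K_V m_t = K_V \xi_{q_t}^*(p_t)$, the remark on the regularity of $(q,p)\mapsto K_V\xi_q^*(p)$ shows $t\mapsto u_t$ is continuous, hence in $L^p(I,V)\hookrightarrow L^p(I,T_eG^{k_0+1})$. By Proposition \ref{reg_groups} the evolution equation $\dot g_t = u_t\cdot g_t$, $g_0=e$, admits a unique solution $g\in AC_{L^p}(I,G^{k_0})$; because $u_t$ takes values in $T_eG^{k_0+1}$ (as $V\hookrightarrow T_eG^{k_0+2}$, even better), the same proposition upgrades the regularity so that $g\in C^1(I,G^{k_0+1})$. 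The identity $q_t = g_t\cdot q_0$ follows exactly as in the proof of the evolution theorem in $\mathcal{Q}$: the curve $t\mapsto A(g_t,q_0)$ solves $\dot q = \xi_q(u)$ with $q(0)=q_0$, and uniqueness of that Cauchy problem forces the two curves to coincide.

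For the second claim, the strategy is to differentiate $t\mapsto \operatorname{Ad}_{g_t}^*(m_t)$ and show it is constant, which by the initial condition $g_0=e$ gives $\operatorname{Ad}_{g_t}^*(m_t)=m_0$, i.e. $m_t = \operatorname{Ad}_{g_t^{-1}}^*(m_0)$. I would test against a fixed $v\in T_eG^{k_0+1}$ and compute $\frac{d}{dt}(m_t\,|\,\operatorname{Ad}_{g_t}(v))$. Here the Hamiltonian equations enter: the $\dot p$ equation supplies $\dot m_t$ through $-\partial_q h$, while the $\dot q = \xi_q(u)$ equation, together with the defining relation $\frac{d}{dt}\operatorname{Ad}_{g_t}(v) = \operatorname{ad}_{u_t}\operatorname{Ad}_{g_t}(v)$ (the infinitesimal version of the cocycle property of $\operatorname{Ad}$, valid because $g\in C^1(I,G^{k_0+1})$ and $\operatorname{Ad}$ is defined and $C^1$ on $G^{k_0+1}$ per the adjoint section), produces a compensating term. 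The two contributions should cancel, yielding $\frac{d}{dt}(\operatorname{Ad}_{g_t}^*(m_t)\,|\,v)=0$ for all test vectors. This is the integrated Euler--Poincaré identity: it is precisely the statement that the momentum is transported by the coadjoint action of the flow.

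The third claim then follows by substitution. From \eqref{int_epdiff2} we have $m_t = \operatorname{Ad}_{g_t^{-1}}^*(m_0)$, so $u_t = K_V m_t = K_V\operatorname{Ad}_{g_t^{-1}}^*(m_0)$, and the lift equation $\dot g_t = u_t\cdot g_t$ becomes the autonomous system $\dot g = G_{m_0}(g) = K_V\operatorname{Ad}_{g^{-1}}^*(m_0)\cdot g$. It remains to verify that $G_{m_0}:G^{k_0+1}\to TG^{k_0+1}$ is locally Lipschitz, so that uniqueness of the solution starting at $e$ holds by the Picard--Lindelöf argument of Proposition \ref{reg_groups}. I would decompose $G_{m_0}$ as the composition $g\mapsto \operatorname{inv}(g)=g^{-1}\mapsto \operatorname{Ad}_{g^{-1}}^*(m_0)\mapsto K_V\operatorname{Ad}_{g^{-1}}^*(m_0)\mapsto (K_V\operatorname{Ad}_{g^{-1}}^*(m_0))\cdot g$ and control each factor: $\operatorname{inv}$ is $C^1$ hence locally Lipschitz on charts by (G.2); $g\mapsto\operatorname{Ad}_g^*(m_0)$ into $V^*$ is locally Lipschitz because $\operatorname{Ad}$ is $C^1$ on $G^{k_0+1}$ and $m_0$ restricts continuously to $V^*$; $K_V$ is a fixed bounded linear map; and the final right-action step $(u,g)\mapsto u\cdot g = T_eR_g(u)$ is controlled by the local Lipschitz property of $g\mapsto T_eR_g$ together with joint continuity from (G.4).

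The main obstacle I anticipate is the second step: making the formal differentiation of $\operatorname{Ad}_{g_t}^*(m_t)$ rigorous in this non-Lie, Banach-graded setting. The subtlety is that $\operatorname{Ad}_{g_t}(v)$ lives in $T_eG^{k_0}$ while $\operatorname{ad}_{u_t}$ need not be a bounded operator there, so the identity $\frac{d}{dt}\operatorname{Ad}_{g_t}(v)=\operatorname{ad}_{u_t}\operatorname{Ad}_{g_t}(v)$ must be justified at the level of the adjoint map's $C^1$-regularity on $G^{k_0+1}$ (established in the adjoint section) rather than by a naive structure-constant computation, and one must track carefully the loss of one derivative when passing between the grades $k_0$, $k_0+1$, and $k_0+2$. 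Pairing against $m_t$, which is only guaranteed to be continuous on $T_eG^{k_0}$ and on $V$, is what keeps the computation well-defined, so the grading has to be bookkept precisely throughout.
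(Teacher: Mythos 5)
Your construction of the lifted trajectory (part 1) and your substitution argument for part 3 follow the paper's route, but the pivotal step --- part 2 --- has a genuine gap. You derive the conservation law by differentiating $t\mapsto \operatorname{Ad}^*_{g_t}(m_t)$ via the identity $\frac{d}{dt}\operatorname{Ad}_{g_t}(v)=\operatorname{ad}_{u_t}\operatorname{Ad}_{g_t}(v)$, which you justify by ``the adjoint map's $C^1$-regularity on $G^{k_0+1}$ established in the adjoint section''. That regularity is not established there, and it is precisely what is unavailable in this setting: the adjoint section only proves that, for each \emph{fixed} $g\in G^{k_0+1}$, the conjugation $\operatorname{int}_g$ is $C^1$, so that $\operatorname{Ad}_g=T_e\operatorname{int}_g$ exists as a bounded operator on $T_eG^{k_0}$. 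No differentiability of $g\mapsto \operatorname{Ad}_g$ is proved, no bracket $\operatorname{ad}$ is ever defined (the $G^k$ are not Lie groups), and differentiating conjugation in the group variable loses a derivative, so $\frac{d}{dt}\operatorname{Ad}_{g_t}(v)$ --- if it exists at all --- lives in a space \emph{weaker} than $T_eG^{k_0}$. Your remark that pairing against $m_t$ ``keeps the computation well-defined'' then points the wrong way: $m_t$ is a continuous functional on $T_eG^{k_0}$ that extends only to the \emph{smaller} spaces $T_eG^{k_0+l}$ and $V$, not to weaker ones; likewise $\dot m_t$ exists only in $(T_eG^{k_0+2})^*$ (Lemma \ref{reg_mom}), which cannot be paired with $\operatorname{Ad}_{g_t}(v)\in T_eG^{k_0}$. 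So neither term produced by your product rule is defined.

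The paper circumvents exactly this obstruction by never differentiating $\operatorname{Ad}_{g_t}(v)$ on its own. It differentiates the scalar $t\mapsto \left(p_t\,|\,\xi_{q_t}(\operatorname{Ad}_{g_t}(v))\right)$ in canonical coordinates, and computes $\frac{d}{dt}\,\xi_{q_t}(\operatorname{Ad}_{g_t}(v))$ as the mixed partial derivative $\partial_{1,2}\tilde{A}(e,g_t)(u_t,g_tv)$ of the auxiliary $C^2$ map $\tilde{A}(g,g')=A_{q_0}(gg')$ on $G^{k_0+2}\times G^{k_0}$; Schwarz's theorem converts this into $\partial_q(\xi_q(u_t))_{|q=q_t}(\operatorname{Ad}_{g_t}(v)\cdot q_t)$, which cancels against the $\dot p_t$ term coming from the Hamiltonian equation. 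The point is that the $C^2$ action $\xi$ smooths the composite, so the time derivative exists for the full expression even though it does not exist for the adjoint factor alone --- this is the idea missing from your proposal, and without it (or a substitute, e.g.\ extra axioms guaranteeing smoothness of $g\mapsto\operatorname{Ad}_g$) part 2 does not go through. The same unproved premise reappears in your Lipschitz argument for $G_{m_0}$: the paper's Lemma \ref{lem:fin_eq} avoids it by factorizing $\operatorname{Ad}_{g^{-1}}=(T_eL_g)^{-1}\circ T_eR_g$ and invoking the local Lipschitz property of $g\mapsto T_eL_g$ (Proposition \ref{smooth_op}, which is where (G.5) enters), smoothness of inversion in the Banach Lie group $GL(T_eG^{k_0})$, and the $C^1$ regularity of $g\mapsto T_eR_g$ coming from (G.4); your part 3 can be repaired along those lines once part 2 is fixed.
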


\begin{rem}
    The equation \eqref{int_epdiff2} is well defined since $m_0\in (T_eG^{k_0})^*$, the adjoint map $\operatorname{Ad}_{g^{-1}} : T_eG^{k_0}\rightarrow T_eG^k{k_0}$ is also continuous. This equation is an integrated form of the usual Euler-Poincaré equations for right invariant metric on Lie groups.
\end{rem}

\begin{cor}[Uniqueness of the momentum map trajectory]
\label{cor:19.6.1}
    Let $\mathcal{Q}_1$ and $\mathcal{Q}_2$ be Banach manifolds such that $G^{k_0}$ acts on $\mathcal{Q}_1$ and $\mathcal{Q}_2$ within the (GGA) framework. Let $(q^1_0,p^1_0)\in T\mathcal{Q}_1^*$ and $(q^2_0,p^2_0)\in T\mathcal{Q}_2^* $ such that $m_{\mathcal{Q}_1}(q^1_0,p^1_0) = m_{\mathcal{Q}_2}(q^2_0,p^2_0)$. Then for all $t\geq 0$, we have 
    $$
    m_t\doteq m_{\mathcal{Q}_1}(q^1_t,p^1_t) = m_{\mathcal{Q}_2}(q^2_t,p^2_t)$$
    and there is a unique lifted trajectory $t\mapsto g_t$ satisfying 
     $$
    \dot{g}_t = K_Vm_t\cdot g_t
    \mbox{ and } \left\{ \begin{array}{l}
    q^1_t=g_t\cdot q^1_0\\
    \\
    q^2_t=g_t\cdot q^2_0
    \end{array}\right.$$
\end{cor}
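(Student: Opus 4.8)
The plan is to reduce everything to Theorem \ref{theo:int_epdiff} applied separately to each shape space, exploiting the crucial feature that the autonomous lifted equation \eqref{int_epdiff} depends on the data only through the initial momentum $m_0$, and not on the specific shape space, base point, or covector producing it.

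First I would set $m_0 \doteq m_{\mathcal{Q}_1}(q^1_0,p^1_0) = m_{\mathcal{Q}_2}(q^2_0,p^2_0) \in (T_eG^{k_0})^*$, which is well defined precisely by hypothesis. Applying Theorem \ref{theo:int_epdiff} to the Hamiltonian flow $t\mapsto(q^1_t,p^1_t)$ on $\mathcal{Q}_1$ yields a lifted trajectory $g^1 \in C^1(I,G^{k_0+1})$, characterized by item (3) as the solution starting at $e_{G^{k_0+1}}$ of $\dot{g} = G_{m_0}(g)$. Applying the same theorem to $t\mapsto(q^2_t,p^2_t)$ on $\mathcal{Q}_2$ yields $g^2$, characterized as the solution of the exact same autonomous system $\dot{g} = G_{m_0}(g)$ with the exact same initial condition $e_{G^{k_0+1}}$, since $G_{m_0}$ is parametrized solely by $m_0$.

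The key step is then uniqueness. Since $G_{m_0}: G^{k_0+1}\to TG^{k_0+1}$ is locally Lipschitz by Theorem \ref{theo:int_epdiff}, the Cauchy problem $\dot{g}=G_{m_0}(g)$, $g_0 = e_{G^{k_0+1}}$ admits a unique solution; hence $g^1_t = g^2_t \doteq g_t$ for all $t$. Feeding this into the integrated Euler--Poincaré relation \eqref{int_epdiff2} gives $m_{\mathcal{Q}_1}(q^1_t,p^1_t) = \operatorname{Ad}^*_{g_t^{-1}}(m_0) = m_{\mathcal{Q}_2}(q^2_t,p^2_t) \doteq m_t$, which is the first assertion. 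Finally, the single trajectory $g_t$ satisfies $\dot{g}_t = (K_V m_t)\cdot g_t$ and transports both base points, $q^1_t = g_t\cdot q^1_0$ and $q^2_t = g_t\cdot q^2_0$, by item (1) of the theorem applied to each space.

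I do not expect a genuine technical obstacle: all the analytic work — global existence and uniqueness of the lifted flow, the local Lipschitz regularity of $G_{m_0}$, and the derivation of \eqref{int_epdiff2} — is already carried out in Theorem \ref{theo:int_epdiff}. The only point deserving care is conceptual rather than computational: one must observe that $G_{m_0}$ is an \emph{intrinsic} vector field on $G^{k_0+1}$ depending on nothing but $m_0\in(T_eG^{k_0})^*$, so that two a priori distinct Hamiltonian systems on $\mathcal{Q}_1$ and $\mathcal{Q}_2$ give rise to literally the same ODE on the group once their initial momenta coincide.
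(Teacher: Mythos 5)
Your proposal is correct and follows essentially the same route as the paper: the paper likewise takes the two lifted trajectories furnished by Theorem \ref{theo:int_epdiff}, observes that both solve the autonomous system $\dot{g}=G_{m_0}(g)$ with initial condition $e_{G^{k_0+1}}$, and invokes the local Lipschitz property of $G_{m_0}$ (Lemma \ref{lem:fin_eq} in the paper, item (3) of the theorem in your write-up) to conclude $g^1_t=g^2_t$, after which the momentum identity follows from the integrated Euler--Poincar\'e relation \eqref{int_epdiff2}. Your explicit remark that $G_{m_0}$ is intrinsic to the group and depends only on $m_0$ is precisely the conceptual point the paper's proof uses implicitly.
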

\begin{rem}
    Following remark \ref{GGAonG}, we can study the case where $\mathcal{Q}_2=G^{k_0}$ so that we come back to a simple action of $G^{k_0}$ on $\mathcal{Q}_1=\mathcal{Q}$ within the (GGA) framework. The momentum map is simply $m_t = p_t^{G^{k_0}}\cdot g_t^{-1}$ and we obtain equivalence between the normal geodesics in the group $G^{k_0}$ and the normal geodesics in the shape space $\mathcal{Q}$.
\end{rem}

\subsection{Proofs of proposition \ref{prop:19.6.1}, theorem \ref{theo:int_epdiff} and corollary \ref{cor:19.6.1}}
 We first start by proving the local existence and uniqueness of solutions of Hamiltonian equations :
 
 \begin{lemma}
We have local existence and uniqueness of the geodesic equation :
\begin{equation}
\left\{
\begin{array}{ll}
    \dot{q} \ = \partial_{p} h(q,p) \\
     \dot{p} = -\partial_{q}h(q,p)
\end{array}
\right.
\end{equation}
with initial value $(q_0,p_0) \in T\mathcal{Q}^*$. This Hamiltonian flow $t \mapsto (q_t, p_t)$ is $C^1$ on $ T\mathcal{Q}^*$ and we have $u_t = K_Vm_Q(q_t,p_t)$
 \end{lemma}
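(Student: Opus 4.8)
The plan is to establish local existence and uniqueness by appealing to the Picard--Lindelöf theorem for ODEs on Banach manifolds, which is applicable as soon as the right-hand side of the flow equation is locally Lipschitz. The key structural fact, already assembled in the preceding discussion, is that the reduced Hamiltonian $h(q,p)=\tfrac12|u(q,p)|_V^2$ is $C^1$ with locally Lipschitz derivative, where $u(q,p)=K_V\xi_q^*(p)$. First I would work in a canonical chart of $T\mathcal{Q}^*$ over a chart of $\mathcal{Q}$, so that the symplectic gradient takes the explicit form $\nabla^\omega h(q,p)=(\partial_p h(q,p),-\partial_q h(q,p))$ and the geodesic equation \eqref{redHamEq2} becomes an ODE $\dot{z}=X(z)$ on an open subset of the modelling Banach space $\mathbb{B}\times\mathbb{B}^*$, with $z=(q,p)$ and $X=\nabla^\omega h$.

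The crux is then to verify that $X$ is locally Lipschitz in $z$. This is exactly where the regularity chain built in Section \ref{sec:sub-riem} is used: since $\xi:V\times\mathcal{Q}\to T\mathcal{Q}$ is $C^2$ by (S.3), the map $q\mapsto \xi_{q|V}\in\mathcal{L}(V,T_q\mathcal{Q})$ is $C^1$ with locally Lipschitz derivative, whence $(q,p)\mapsto\xi_q^*(p)_{|V}\in V^*$ inherits the same regularity, and composing with the bounded linear map $K_V$ shows $u$ is $C^1$ with locally Lipschitz derivative. Consequently $h$, and hence $\nabla^\omega h$, is $C^1$ with locally Lipschitz derivative, so in particular $X=\nabla^\omega h$ is locally Lipschitz on the chart domain. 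By the Banach-space Picard--Lindelöf theorem there is a unique maximal $C^1$ solution $t\mapsto(q_t,p_t)$ through the initial value $(q_0,p_0)$; the $C^1$ regularity of the flow follows because $X$ is $C^1$, and the chart-independent statement follows because the Hamiltonian vector field is intrinsically defined via the symplectic form. The relation $u_t=K_V\xi_{q_t}^*(p_t)=K_Vm_\mathcal{Q}(q_t,p_t)$ is then immediate from the earlier computation of the unique maximizer $u(q,p)$ of the Hamiltonian in the $u$-variable.

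The main obstacle I anticipate is purely one of bookkeeping rather than of mathematical depth: one must confirm that the local Lipschitz regularity of the \emph{derivative} of $h$ genuinely transfers to local Lipschitz regularity of the \emph{vector field} $\nabla^\omega h$ itself, and that the duality pairings and the transpose $\xi_q^*$ are handled with the correct continuity in the cotangent variable $p$ (which enters linearly, so this part is harmless) and in the base variable $q$ (where the nontrivial regularity resides). Because $V\hookrightarrow T_eG^{k_0+2}$ gives the two orders of regularity needed for $\xi$ to be $C^2$, there is exactly enough smoothness to close the argument; losing one order here would leave $\nabla^\omega h$ merely continuous and Picard--Lindelöf would fail. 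I would therefore state carefully, in the chart, that $\partial_q h$ and $\partial_p h$ are each locally Lipschitz, invoke the cited existence-uniqueness result (as in the proof of Proposition \ref{reg_groups}, e.g.\ \cite{Younes2019}) to produce the unique local $C^1$ flow, and close by reading off $u_t$.
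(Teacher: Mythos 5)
Your proposal is correct and follows essentially the same route as the paper: the regularity chain from (S.3) gives that $h$ is $C^1$ with locally Lipschitz derivative, hence $\nabla^{\omega}h$ is locally Lipschitz in canonical charts, Picard--Lindel\"of then yields the unique local $C^1$ solution, and $u_t = K_V m_{\mathcal{Q}}(q_t,p_t)$ is read off from the unique maximizer of $\mathcal{H}$ in $u$. One small slip worth fixing: you justify the $C^1$-in-time regularity by asserting that $X=\nabla^{\omega}h$ is $C^1$, but only local Lipschitz continuity of $X$ is available from the hypotheses; this is harmless because continuity of $X$ already suffices --- $\dot z = X(z)$ with $z$ continuous forces $\dot z$ to be continuous.
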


 \begin{proof}
The reduced Hamiltonian $h$ is $C^1$ with locally lipschitz derivative and thus its symplectic gradient is locally lipschitz. By Picard-Lindelof theorem, we can therefore integrate the equation$~$:
 $$
(\dot{q}, \dot{p})=\nabla^{\omega}h(q,p)
 $$
 and get for any initial data $(q_0,p_0) \in T\mathcal{Q}^*$ a unique Hamiltonian $C^1$ flow $t\mapsto(q_t, p_t)$. 
 \end{proof}

For the rest of this part, we fix $t\mapsto (q_t,p_t)$ a maximal solution of the Hamiltonian equations, with initial value $(q_0,p_0)$. We denote by $J$ the interval of definition of this maximal solution. To show that $J=\R$, we will first need to lift the flow on the group $G^{k_0}$, and study its convergence.
We first define the momentum associated to the flow 
$$
m_t = m_{\mathcal{Q}}(q_t,p_t)
$$
and we now determine the regularity of the momentum 
\begin{lemma}[Regularity of the momentum map]
\label{reg_mom}
For all $t$, $m_t\in (T_eG^{k_0})^*$, and the application $t\mapsto m_t$ is $C^1$ on $(T_eG^{k_0+2})^*$
\end{lemma}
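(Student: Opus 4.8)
The goal is to show that the momentum trajectory $t\mapsto m_t = m_{\mathcal{Q}}(q_t,p_t)$, a priori only valued in $(T_eG^{k_0})^*$, is in fact $C^1$ when viewed as a curve in the smaller space $(T_eG^{k_0+2})^*$. My plan is to exploit the factorization $m_{\mathcal{Q}}(q,p)=\xi_q^*(p)$ together with the enhanced regularity of the infinitesimal action $\xi$ provided by (S.3).

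First I would observe that the duality pairing defining $m_t$ can be tested against vectors $v\in T_eG^{k_0+2}$, and that (S.3) with $l=2$ makes $\xi : T_eG^{k_0+2}\times\mathcal{Q}\to T\mathcal{Q}$ a $C^2$ map; in particular $q\mapsto \xi_q$ is $C^2$ as a map into $\mathcal{L}(T_eG^{k_0+2},T\mathcal{Q})$, so its transpose $q\mapsto\xi_q^*$ is $C^2$ into $\mathcal{L}(T_q\mathcal{Q}^*,(T_eG^{k_0+2})^*)$, and certainly $C^1$ with locally Lipschitz derivative (this mirrors the regularity statement already invoked for the $V$-valued map $u(q,p)=K_V\xi_q^*(p)$). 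Composing with the bilinear evaluation $(q,p)\mapsto\xi_q^*(p)$, the map $(q,p)\mapsto \xi_q^*(p)\in (T_eG^{k_0+2})^*$ is $C^1$ on $T\mathcal{Q}^*$.

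Next I would simply compose this $C^1$ map with the flow. By Proposition \ref{prop:19.6.1} (or the preceding lemma), the trajectory $t\mapsto (q_t,p_t)$ is a $C^1$ curve in $T\mathcal{Q}^*$. Therefore $t\mapsto m_t = \xi_{q_t}^*(p_t)$ is the composition of a $C^1$ curve with a $C^1$ map into $(T_eG^{k_0+2})^*$, hence $C^1$ as a curve in $(T_eG^{k_0+2})^*$. The values $m_t$ restrict to continuous linear forms on each $T_eG^{k_0+l}$ (and on $V$) as already noted, which justifies regarding $m_t$ as living in $(T_eG^{k_0+2})^*$ in the first place.

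The main obstacle to be careful about is the loss of regularity across scales: in infinite dimensions one only gets that $\xi$ is $C^l$ as a joint map on $G^{k_0+l}\times\mathcal{Q}$, so to land in $(T_eG^{k_0+2})^*$ with a full $C^1$ dependence one must genuinely use $l=2$ rather than $l=1$, exactly as the remark following (S.3) warns (the derivative of $\xi$ is only locally Lipschitz, not $C^1$, in infinite dimension). I would make explicit that it is the continuous embedding together with $C^2$ regularity of $\xi$ on $T_eG^{k_0+2}$ that upgrades the target space from $(T_eG^{k_0})^*$ to $(T_eG^{k_0+2})^*$ while retaining $C^1$ smoothness in $t$; one should verify that the transposition $\xi_q\mapsto\xi_q^*$ does not degrade regularity, which is immediate since transposition is a bounded linear operation between the relevant spaces of operators.
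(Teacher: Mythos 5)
Your proof is correct and takes essentially the same route as the paper: factor $m_t=\xi_{q_t}^*(p_t)$, use (S.3) with $l=2$ to conclude that $q\mapsto\xi_q^*\in\mathcal{L}(T_q\mathcal{Q}^*,(T_eG^{k_0+2})^*)$ is $C^1$ (the paper obtains this currying step from Omori's Theorem 5.3), and compose with the $C^1$ Hamiltonian trajectory $t\mapsto(q_t,p_t)$. One inaccuracy worth fixing: your intermediate assertion that $q\mapsto\xi_q$ is $C^2$ into $\mathcal{L}(T_eG^{k_0+2},T\mathcal{Q})$ is exactly what the remark following (S.3) rules out in infinite dimension --- joint $C^2$ regularity of $\xi$ only yields a $C^1$ curried map with locally Lipschitz derivative --- but since you immediately fall back on the $C^1$ statement (and your closing paragraph correctly retracts the $C^2$ claim), the argument as actually used is unaffected.
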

\begin{proof}
By definition, we have $m_Q(q,p) = p\circ \xi_q = \xi_q^*(p)$. By hypothesis (S.2), for all $t\in \R$, the action $\xi_{q_t}:T_eG^{k_0}\rightarrow T_{q_t}\mathcal{Q}$ is continuous, and as $p_t \in T_{q_t}\mathcal{Q}^*$, the momentum $m_t = m_Q(q_t, p_t)$ induced a continuous form on $T_eG^{k_0}$.

Furthermore, by (S.3), the restriction $\xi : T_eG^{k_0+2}\times \mathcal{Q}  \rightarrow  T\mathcal{Q}$ is $C^2$. Therefore the mapping $q \in \mathcal{Q} \mapsto \xi_q \in \mathcal{L}(T_eG^{k_0+2},T_q\mathcal{Q})$ is $C^1$ \cite[Theorem 5.3]{hideki1997infinite}. Consequently the dual mapping $q \in \mathcal{Q} \mapsto \xi_q^* \in \mathcal{L}(T_q\mathcal{Q}^*,(T_eG^{k_0+2})^*)$ is also $C^1$.
As $m_t=\xi_{q_t}^*(p_t)$ and $p_t$ and $q_t$ are $C^1$, the momentum $t\mapsto m_t$ is $C^1$ in $(T_eG^{k_0+2})^*
$ (and similarly on $V^*$ since $V$ is continuously embedded in $T_eG^{k_0+2}$) \end{proof}

We also lift the Hamiltonian flow on the group. We first retrieve the eulerian velocity $u_t=K_V m_t$~:

\begin{lemma}
The equation \begin{equation}
    \dot{g}_t = u_t\cdot g_t
    \label{relev}
\end{equation} is well defined on $G^{k_0+1}$ and has a unique maximal solution $C^1$ on $G^{k_0+1}$, defined on $J$. The solution $g_t$ is such that $q_t=g_t\cdot q_0$
\end{lemma}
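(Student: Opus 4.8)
The plan is to build the lift by a direct appeal to the evolution machinery of Proposition \ref{reg_groups}, and then to recover the relation $q_t=g_t\cdot q_0$ from a uniqueness argument for the downstairs ODE. The three things to establish are: that \eqref{relev} is a bona fide (locally Lipschitz) differential equation on $G^{k_0+1}$; that it has a unique $C^1$ solution on all of $J$; and that this solution projects back to $q_t$.

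First I would pin down the regularity index. Since $u_t=K_Vm_t$, and by Lemma \ref{reg_mom} the curve $t\mapsto m_t$ is $C^1$ with values in $V^*$ while $K_V:V^*\to V$ is a bounded linear isomorphism, the curve $t\mapsto u_t$ is $C^1$ into $V$, hence continuous. Because $V\hookrightarrow T_eG^{k_0+2}=T_eG^{(k_0+1)+1}$, condition (G.4) applied with $k=k_0+1$ and $l=1$ guarantees that the infinitesimal action $T_eG^{k_0+2}\times G^{k_0+1}\to TG^{k_0+1}$, $(u,g)\mapsto u\cdot g=T_eR_g(u)$, is $C^1$. In particular the right-hand side of \eqref{relev} is a well-defined time-dependent vector field on $G^{k_0+1}$ that is locally Lipschitz in $g$, so the equation makes sense at the level $G^{k_0+1}$.

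For existence and uniqueness on $J$ I would reduce to Proposition \ref{reg_groups} with $k=k_0+1$. Continuity of $u$ shows $u\in L^p([a,b],T_eG^{k_0+2})$ for every compact $[a,b]\subset J$, so Proposition \ref{reg_groups} yields a unique solution $g\in AC_{L^p}([a,b],G^{k_0+1})$ with $g_0=e$ (fixing $0\in J$), global on $[a,b]$. As $u$ is continuous and the action $C^1$, the map $t\mapsto u_t\cdot g_t$ is continuous, so $g$ is actually $C^1$. Uniqueness forces these local solutions to agree on overlaps, and exhausting $J$ by an increasing sequence of compact subintervals produces a unique $C^1$ solution defined on all of $J$; no blow-up can occur because $u_t$ is furnished by the already-existing Hamiltonian flow on $J$ rather than by the lifted equation itself.

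Finally I would verify $q_t=g_t\cdot q_0$. Set $\tilde q_t=A_{q_0}(g_t)$. Differentiating the equivariance identity $A_{q_0}\circ R_g=A_{g\cdot q_0}$ at the identity in a direction $u$ gives the key relation $T_gA_{q_0}(u\cdot g)=\xi_{g\cdot q_0}(u)$, whence $\dot{\tilde q}_t=T_{g_t}A_{q_0}(u_t\cdot g_t)=\xi_{\tilde q_t}(u_t)$ with $\tilde q_0=q_0$. On the other hand the Hamiltonian equations \eqref{redHamEq2}, together with $u_t=K_Vm_t=u(q_t,p_t)$, give $\dot q_t=\partial_p\mathcal{H}_{\mathcal{Q}}(q_t,p_t,u_t)=\xi_{q_t}(u_t)$, again equal to $q_0$ at $t=0$. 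Both curves solve the same ODE $\dot x=\xi_x(u_t)$ with the same initial value, so by the uniqueness statement of the Evolution theorem in $\mathcal{Q}$ (Picard--Lindelöf, valid since $\xi$ is $C^2$ and $u$ is continuous) they coincide, giving $q_t=g_t\cdot q_0$. The main obstacle I anticipate is not the identification step, which is a routine differentiation of the equivariance relation followed by uniqueness, but the careful index bookkeeping—ensuring $u_t$ lands in $T_eG^{k_0+2}$ so that the lift genuinely lives one level down in $G^{k_0+1}$—and the propagation of the local solutions of Proposition \ref{reg_groups} to the entire maximal interval $J$.
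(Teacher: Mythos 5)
Your proposal is correct and follows essentially the same route as the paper: use Lemma \ref{reg_mom} to get that $t\mapsto u_t=K_Vm_t$ is $C^1$ with values in $V\hookrightarrow T_eG^{k_0+2}$, invoke Proposition \ref{reg_groups} at level $k=k_0+1$ for global existence and uniqueness of the lift, and upgrade to $C^1$ regularity via the continuity of $t\mapsto u_t$ and condition (G.4). The only difference is that you spell out the identification $q_t=g_t\cdot q_0$ (via the equivariance relation $T_gA_{q_0}(u\cdot g)=\xi_{g\cdot q_0}(u)$ and Picard--Lindel\"of uniqueness for $\dot{x}=\xi_x(u_t)$), a step the paper's proof leaves implicit since it was already carried out in the theorem on the evolution map in $\mathcal{Q}$.
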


\begin{proof}
This is a direct corollary of Proposition \ref{reg_groups} and Lemma \ref{reg_mom}.
We saw that $t\mapsto m_t = m_{\mathcal{Q}}(q_t, p_t)$ is $C^1$ in $(T_eG^{k_0+2})^*$.  Moreover, since :
$$
(T_eG^{k_0+2})^*\stackrel{i_V^*}\rightarrow V^* \stackrel{K_V}\simeq V \hookrightarrow T_eG^{k_0+2}
$$
the velocity $t\mapsto u_t=K_Vm_t$ is $C^1$ in $T_eG^{k_0+2}$. By proposition \ref{reg_groups}, there exists $g_t \in AC_{L^1}(I,G^{k_0+1})$ satisfying equation \eqref{relev}. Now, since $t\mapsto u_t$ is $C^1$ and the product $$
  \begin{array}{ccl}
    T_eG^{k_0+2}\times G^{k_0+1} & \longrightarrow & TG^{k_0+1} \\
    (u, \ g) &\longmapsto & u\cdot g \\
  \end{array}
$$ is $C^1$ (G.4), and since $g_t$ is continuous, $g$ is therefore $C^1$. 
\end{proof}

We can now prove that $J=\R$ :

\begin{lemma}
    The Hamiltonian flow is defined globally.
\end{lemma}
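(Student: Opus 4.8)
The plan is to argue by contradiction. Write the maximal interval as $J=(T_-,T_+)$ and assume $T_+<+\infty$; the case $T_->-\infty$ is entirely symmetric (run the flow backward in time). I will show that $(q_t,p_t)$ converges to a limit $(q_{T_+},p_{T_+})\in T\mathcal{Q}^*$ as $t\to T_+$, so that restarting the local flow of the previous lemma at that point extends the solution strictly beyond $T_+$, contradicting the maximality of $J$.

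First I would use conservation of energy. Since \eqref{redHamEq2} is the symplectic gradient flow of the \emph{time-independent} reduced Hamiltonian $h$, a direct computation in canonical coordinates gives $\frac{d}{dt}h(q_t,p_t)=\partial_q h\cdot\partial_p h-\partial_p h\cdot\partial_q h=0$. Hence $|u_t|_V^2=2h(q_t,p_t)=2h(q_0,p_0)$ is constant, so that $u\in L^\infty([0,T_+),V)$ and, through the continuous embeddings $V\hookrightarrow T_eG^{k_0+2}\hookrightarrow T_eG^{k_0+1}$, also $u\in L^2([0,T_+],T_eG^{k_0+1})$. Next I would control the base point $q$ through the group lift: by Proposition \ref{reg_groups} applied on the compact interval $[0,T_+]$, the equation $\dot g_t=u_t\cdot g_t$ has a solution $g\in AC_{L^2}([0,T_+],G^{k_0+1})$ defined up to the endpoint, so $g_t\to g_{T_+}$ in $G^{k_0+1}$ and hence in $G^{k_0}$. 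Since $q_t=g_t\cdot q_0$ and the action is continuous by (S.1), $q_t$ converges to $q_{T_+}\doteq g_{T_+}\cdot q_0$ in $\mathcal{Q}$; in particular $q_t$ eventually remains in a fixed chart $(U,\varphi)$ around $q_{T_+}$.

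The crux, and the step I expect to be the main obstacle, is the bound on the momentum. Working in the chart $U$ and differentiating $h(q,p)=\tfrac{1}{2}\left(\xi_q^*p\,|\,K_V\xi_q^*p\right)$ with respect to $q$ yields, using that $K_V$ is self-adjoint, $\partial_q h(q,p)\cdot\delta q=\left(\partial_q(\xi_q^*p)\delta q\,|\,u(q,p)\right)$ with $u(q,p)=K_V\xi_q^*p$. A priori this is \emph{quadratic} in $p$, which would only produce a quadratic differential inequality and would permit finite-time blow-up; the resolution is that one of the two factors is exactly $u_t$, whose norm is frozen by energy conservation. Since $q\mapsto\xi_q^*$ is $C^1$ with locally Lipschitz derivative (S.3), the norm $\|\partial_q\xi_q^*\|$ is bounded by some $C>0$ on $\varphi(U)$, so along the trajectory $|\dot p_t|=|\partial_q h(q_t,p_t)|\leq C\,|u_t|_V\,|p_t|=C\sqrt{2h(q_0,p_0)}\,|p_t|$.

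Grönwall's inequality then keeps $|p_t|$ bounded on $[t_0,T_+)$, whence $\dot p_t$ is bounded, $p_t$ is Lipschitz on the finite interval $[t_0,T_+)$ and therefore converges to some $p_{T_+}\in T_{q_{T_+}}\mathcal{Q}^*$. This produces the limit $(q_{T_+},p_{T_+})\in T\mathcal{Q}^*$, and restarting the local Hamiltonian flow there extends the solution past $T_+$, contradicting the maximality of $J$. Therefore $T_+=+\infty$ and, by the symmetric argument, $T_-=-\infty$, so $J=\mathbb{R}$.
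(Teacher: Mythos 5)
Your proof is correct, and its overall skeleton is the same as the paper's: argue by contradiction on a finite endpoint, use conservation of the reduced Hamiltonian to freeze $|u_t|_V$, get convergence of $q_t$ through the group lift together with (S.1), get convergence of $p_t$ from the momentum equation (linear in $p$ once $|u_t|_V$ is frozen) plus Gr\"onwall, and restart the local flow at the limit point.

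The one step where you genuinely diverge is the convergence of the lift $g_t$. The paper proves it by hand: it right-translates the curve, setting $\tilde g_t = g_t g_{t_0}^{-1}$, works in a fixed chart $(U,\phi)$ around $e$ on which $\|d_g\phi\circ T_eR_g\|$ is bounded, extracts an existence time $\epsilon$ independent of $t_0$, and concludes that the curve is Lipschitz in the chart near the endpoint, hence convergent. You instead observe that the constancy of $|u_t|_V$, together with $V\hookrightarrow T_eG^{k_0+2}$, puts $u$ in $L^2$ on the \emph{closed} interval $[0,T_+]$, so that Proposition \ref{reg_groups} already furnishes a global solution of $\dot g_t = u_t\cdot g_t$ on $[0,T_+]$, which by uniqueness coincides with the lift on $[0,T_+)$; continuity at $T_+$ is then free. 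This is a real economy: it reuses the global existence statement rather than re-proving a uniform local estimate. One index should be fixed: to land in $AC_{L^2}([0,T_+],G^{k_0+1})$ you need $u\in L^2([0,T_+],T_eG^{k_0+2})$ (Proposition \ref{reg_groups} applied with $k=k_0+1$), not merely $u\in L^2([0,T_+],T_eG^{k_0+1})$ as written; this is harmless since $V\hookrightarrow T_eG^{k_0+2}$, and in any case convergence in $G^{k_0}$ alone suffices for (S.1). Finally, your momentum estimate is the paper's in disguise: differentiating $h$ and pairing one factor with $u(q,p)$ gives exactly the bound on the coefficient of the linear equation $\dot p_t = -\bigl(\partial_q(\xi_q(u_t))_{|q=q_t}\bigr)^*p_t$ by a constant times $|u_t|_V$, which is how the paper sets up Gr\"onwall; your remark that the a priori quadratic dependence in $p$ becomes linear along the flow is the right way to phrase why no finite-time blow-up can occur.
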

\begin{proof}
\emph{Step 1 : $|u_t|$ is constant. }
To prove that $g_t$ is defined on $J$, we first show that $|u_t|$ is constant. For $t\in J$, we have
\begin{align*}
    \frac{d}{dt} h(q_t,p_t) &= \partial_q h(q_t,p_t)(\dot{q}_t) + \partial_p h(q_t,p_t)(\dot{p}_t) \\
    &= \partial_q h(q_t,p_t)(\partial_p h(q_t,p_t)) + \partial_p h(q_t,p_t)(-\partial_q h(q_t,p_t)) \\
    &= 0\,.
\end{align*}
But we also have $h(q_t,p_t)= \mathcal{H}_{\mathcal{Q}}(q_t,p_t,u_t) = \frac{1}{2}|u_t|_V^2$, therefore $u_t$ is constant.
    
\emph{Step 2 : $J=\R$}

Let $b=\sup J$, and suppose $b<+\infty$. We are going to prove that in that case, the Hamiltonian flow $(q_t,p_t)$ converges and thus a contradiction. 

We recall that by (G.4), $(g,u)\in G^{k_0+1}\times T_eG^{k_0+1}\mapsto T_eR_g u \in TG^{k_0+1}$ is continuous. Then by \cite[Theorem 5.3]{hideki1997infinite}, the mapping $g\in G^{k_0+1}\rightarrow T_e R_g \in \mathcal{L}(T_eG^{k_0+1},TG^{k_0+1})$ is also locally bounded. Let $(U,\phi)$ a local chart of $G^{k_0+1}$ around $e$, and suppose that there exists $K>0$, such that for all $g\in U$
$$
\lVert d_g\phi\circ T_e R_g \rVert < K\,.
$$
Such an open set U always exists since, if we identify $T_eG^{k_0+1}$ and the Banach space $\mathbb{B}^{k_0+1}$, the map $g\in G^{k_0+1}\mapsto d_g\phi\circ T_e R_g \in \mathcal{L}(T_eG^{k_0+1},T_eG^{k_0+1})$ is locally bounded. Let $t_0\in J$, and by translating the curve $g_t$ we define $\tilde{g}_t\in G^{k_0+1}$ :
$$
\tilde{g}_t = g_tg_{t_0}^{-1} = R_{g_{t_0}^{-1}}g_t\,.
$$
The curve $\tilde{g_t}$ verifies the Cauchy problem :
\begin{equation}
    \left\{\begin{array}{l}
        \dot{\tilde{g}}_t=u_t \cdot \tilde{g}_t=T_eR_{\tilde{g}_t}(u_t)\,,   \\
         \tilde{g}_0=e \,.
    \end{array}
    \right.
\end{equation}
On the open subset $\tilde{J}=\{t \in J,\tilde{g}_t\in U \}$, we can therefore look at the curve $\alpha_t=\phi(\tilde{g}_t)$ on the local chart $U$, whose derivative is given by 
$\dot{\alpha}_t = d_{\tilde{g}_t}\phi\circ T_e R_{\tilde{g}_t}(u_t)$. Therefore, for all $t\in\tilde{J}$, 
\begin{align*}
    |\dot{\alpha}_t|_{T_eG^{k_0+1}} &= |d_{\tilde{g}_t}\phi\circ T_e R_{\tilde{g}_t}(u_t)|_{T_eG^{k_0+1}}\\
    &\leq K |u_t|_{T_eG^{k_0+1}} \\
    &\leq K' |u_t|_V \ \ \mbox{as }V\hookrightarrow T_eG^{k_0+1}
\end{align*}
and as $u_t$ is bounded, by \cite{lang2001fundamentals} (proposition 1.1, p.68), there exists $a,\epsilon>0$ independent of $t_0$ so that the flow $\alpha_t$ can be defined on $]t_0-\epsilon,t_0+\epsilon[\cap J$ and $\alpha_t\in\phi(U)$. Therefore, we can take $t_0\in]b-\epsilon,b[$, the curve $\alpha_t$ will also be defined on $]b-\epsilon,b[$, is Lipschitz and hence converges towards $\alpha_b\in\phi(U)$. Thus $g_t$ converges towards $g_b\in G^{k_0+1}$ when $t\rightarrow b$. By (S.1), the action $A$ of $G^{k_0+1}$ on $\mathcal{Q}$ is continuous, and thus $q_t = A(g_t,q_0)$ converges towards $q_b=A(g_b,q_0)\in \mathcal{Q}$.

 We also have that $p_t$ is solution of the linear differential equation :
 $$
 \dot{p}_t = -\partial_q h(q_t,p_t) = -\left(\partial_q (\xi_{q}(u_t))_{q=q_t}\right)^*p_t
 $$
The mapping $(q,u) \mapsto (\partial_q\xi_q u)^*$ is continuous and linear with regards to $u$, therefore there exists $L>0$ and $\delta>0$ such that in a local chart around $q_b$, and for $|u|<\delta$, we have :
$$
|(\partial_q\xi_q u)^*|_{\mathcal{L}(\mathbb{Q}^*,\mathbb{Q}^*)} \leq L
$$
where $\mathbb{Q}$ is a Banach model space for $\mathcal{Q}$, and where we identify $((\partial_q\xi_q u)^*)$ with its induced mapping in the chart around $q_b$.
Therefore, for $t$ close to $b$, we also have :
$$
|(\partial_{q}\xi_{q} u_t)_{q=q_t}^*|_{\mathcal{L}(\mathbb{Q}^*,\mathbb{Q}^*)} \leq L|u_t|/\delta=L|u_0|/\delta
$$
Therefore by Gronwall lemma, $p$ is bounded when $t \in ]b-\epsilon',b[$ with $\epsilon'>0$, and  thus $\dot{p}$ is also bounded. This implies again that $p_t$ converges in $T\mathcal{Q}^*$ towards $p_b$. 

Therefore the Hamiltonian flow $(q_t,p_t)$ converges when $t\rightarrow b$, and then $b=+\infty$.
A similar proof would show that we also have $\inf J = -\infty$, hence $J=\R$
\end{proof}

\begin{lemma}
    The momentum $m_t$ is solution of an integral version of EPdiff, i.e., for $v\in T_eG^{k_0}$~:
    \begin{equation}
        \frac{d}{dt}\left(m_t\,|\,\operatorname{Ad}_{g_t}(v)\right) = 0
    \end{equation}
\end{lemma}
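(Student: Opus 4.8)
The plan is to avoid differentiating $t\mapsto\operatorname{Ad}_{g_t}(v)$ directly, since turning it into an $\operatorname{ad}$-bracket is delicate in this Banach setting; instead I recast the scalar $\left(m_t\,|\,\operatorname{Ad}_{g_t}(v)\right)$ as the pairing of $p_t$ with a \emph{linearized flow}, for which the Hamiltonian equations produce an exact cancellation. By definition of the momentum map, $\left(m_t\,|\,\operatorname{Ad}_{g_t}(v)\right)=\left(p_t\,|\,\xi_{q_t}(\operatorname{Ad}_{g_t}(v))\right)$, so everything reduces to understanding the vector $\xi_{q_t}(\operatorname{Ad}_{g_t}(v))\in T_{q_t}\mathcal{Q}$.

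First I would establish the equivariance identity
$$\xi_{q_t}\!\left(\operatorname{Ad}_{g_t}(v)\right)=T_{q_0}A_{g_t}\!\left(\xi_{q_0}(v)\right).$$
This follows from the action axiom together with $g_t^{-1}\cdot q_t=q_0$: writing $\operatorname{Ad}_{g_t}(v)=\partial_h(g_t h g_t^{-1})_{|h=e}(v)$ and using $A(g_t h g_t^{-1},q_t)=A(g_t,A(h,A(g_t^{-1},q_t)))=A(g_t,A(h,q_0))$, differentiation at $h=e$ gives the claim (here $\operatorname{Ad}_{g_t}(v)\in T_eG^{k_0}$ is well defined since $g_t\in G^{k_0+1}$, and $\xi_{q_t}$ pairs with it by (S.2)). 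Setting the constant vector $\mu_0\doteq\xi_{q_0}(v)\in T_{q_0}\mathcal{Q}$ and $X_t\doteq T_{q_0}A_{g_t}(\mu_0)$, we thus obtain $\left(m_t\,|\,\operatorname{Ad}_{g_t}(v)\right)=\left(p_t\,|\,X_t\right)$.

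Second, I would identify $X_t$ as the solution of the homogeneous linearized equation. Since $q_t^s\doteq A(g_t,q_0^s)$ solves $\dot q_t^s=\xi_{q_t^s}(u_t)$ for every variation $q_0^s$ of $q_0$ with $\partial_s q_0^s|_{s=0}=\mu_0$ (this is the identity $A_{q_0^s}\circ R_{g_t}=A_{q_t^s}$ already used in the evolution theorem, differentiated at $e$), differentiating in $s$ — justified by the $C^1$-dependence on initial conditions underlying \eqref{linq2} — yields $\dot X_t=\partial_q(\xi_q(u_t))_{|q=q_t}(X_t)$ with $X_0=\mu_0$. Recalling from \eqref{CLdual} that $\dot p_t=-\big(\partial_q(\xi_q(u_t))_{|q=q_t}\big)^*p_t$, the product rule then gives
\begin{align*}
\frac{d}{dt}\left(p_t\,|\,X_t\right)&=\left(\dot p_t\,|\,X_t\right)+\left(p_t\,|\,\dot X_t\right)\\
&=-\left(p_t\,|\,\partial_q(\xi_q(u_t))_{|q=q_t}X_t\right)+\left(p_t\,|\,\partial_q(\xi_q(u_t))_{|q=q_t}X_t\right)=0,
\end{align*}
which is the assertion.

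I expect the cancellation itself to be essentially free once the two linear equations — for $p_t$ via \eqref{CLdual} and for $X_t$ via the variation equation — are exhibited as mutual adjoints along the same operator $\partial_q(\xi_q(u_t))_{|q=q_t}$. The main obstacle is therefore the setup rather than the conclusion: rigorously proving the equivariance identity in the Banach-manifold category with the correct regularity bookkeeping (the mismatch $v\in T_eG^{k_0}$ against $g_t\in G^{k_0+1}$, handled by (G.4)--(G.5) and (S.2)--(S.3)), and justifying that $X_t$ genuinely solves the linearized Cauchy problem, i.e. legitimizing the interchange of the $s$- and $t$-derivatives through the already-established $C^1$ smoothness of the evolution map and its derivative \eqref{linq2}.
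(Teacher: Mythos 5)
Your proposal is correct in its overall architecture and it ends with exactly the same cancellation as the paper: both proofs reduce the lemma to showing that the vector $X_t=\xi_{q_t}\left(\operatorname{Ad}_{g_t}(v)\right)\in T_{q_t}\mathcal{Q}$ solves the homogeneous linearized equation $\dot X_t=\partial_q\left(\xi_q(u_t)\right)_{|q=q_t}X_t$, and then pair it against the costate equation $\dot p_t=-\left(\partial_q(\xi_q(u_t))_{|q=q_t}\right)^*p_t$. Where you genuinely diverge is in how this variational identity is established. The paper never differentiates a flow with respect to its initial condition in $\mathcal{Q}$: it introduces the map $\tilde A(g,g')=A_{q_0}(gg')$ on $G^{k_0+2}\times G^{k_0}$, which is $C^2$ thanks to (S.2) and (G.3), and obtains the identity by Schwarz's theorem applied to the mixed partials $\partial_{1,2}\tilde A(e,g_t)(u_t,g_tv)=\partial_{2,1}\tilde A(e,g_t)(u_t,g_tv)$. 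The point of this detour through the group is that all the differentiation in the ``shape'' direction is rerouted to the $g'$ slot, where $A_{q_0}$ is $C^\infty$, whereas the action in the $q$ variable for fixed $g\in G^{k_0+1}$ is only $C^1$. Your route instead keeps the variation in $\mathcal{Q}$: the equivariance identity $\xi_{q_t}\left(\operatorname{Ad}_{g_t}(v)\right)=T_{q_0}A_{g_t}\left(\xi_{q_0}(v)\right)$ (which is correct, and rigorous under (S.2)--(S.3) since $g_t\in G^{k_0+1}$) identifies $X_t$ as the derivative of the flow map $A_{g_t}$ with respect to the initial point, and you then invoke smooth dependence on initial conditions. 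This is more conceptual and shorter modulo the ODE black box, at the price of needing precisely the regularity statement the paper's trick was designed to avoid.

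That price is where your justification is miscalibrated: equation \eqref{linq2} and the $C^1$-ness of $\operatorname{Evol}_{\mathcal{Q}}$ concern the derivative with respect to the \emph{control} $u$ at fixed $q_0$, not the derivative with respect to the \emph{initial condition}, so they cannot legitimize your interchange of the $s$- and $t$-derivatives as cited. The gap is fixable within the paper's toolkit: since $\xi:V\times\mathcal{Q}\to T\mathcal{Q}$ is $C^2$ by (S.3) and $u\in L^2(I,V)$, the time-dependent vector field $q\mapsto\xi_q(u_t)$ satisfies the hypotheses of the $C^1$-dependence-on-initial-data theorems of \cite{Younes2019} (Theorems C.15 and C.18), which are exactly the results invoked in the proof of Proposition \ref{reg_groups} for the group flow; combined with the uniqueness argument of the evolution theorem, which shows that $A_{g_t}$ \emph{is} the flow map of $\dot q=\xi_q(u_t)$, this gives that $T_{q_0}A_{g_t}(\mu_0)$ exists and solves the variational Cauchy problem, after which your cancellation is exact. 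So the proposal stands, but the appeal to \eqref{linq2} should be replaced by this dependence-on-initial-conditions argument.
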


\begin{proof}
Let $v \in T_eG^{k_0}$, we have $\left(m_t\,|\,\operatorname{Ad}_{g_t}(v)\right) = \left(p_t\,|\,\mbox{Ad}_{g_t}(v)\cdot q_t\right)$. We will make all computations in canonical coordinates on $T^*\mathcal{Q}$ near $(q_t,p_t)$ (i.e. we locally identify $TQ^*$ with the trivial bundle $E\times E^*$ where $\varphi:U\to E$ is a local chart around $(q_t,p_t)$ for $t=t_0$), and therefore we have $$\frac{d}{dt}\left(p_t\,|\,\mbox{Ad}_{g_t}(v)\cdot q_t\right) =\left(p_t\,|\,\frac{d}{dt}\left(\mbox{Ad}_{g_t}(v)\cdot q_t\right)\right) + \left(\dot{p}_t\,|\,\mbox{Ad}_{g_t}(v)\cdot q_t\right)$$

where $\frac{d}{dt}\left(\mbox{Ad}_{g_t}(v)\cdot q_t\right) \in T_q\mathcal{Q}$ in the charts.
\def\tA{\tilde{A}}
\def\inte{\operatorname{int}}
\def\Ad{\operatorname{Ad}}
We will consider in this proof the mapping
$$
\tilde{A} : \left|
  \begin{array}{ccl}
    G^{k_0+2} \times G^{k_0} & \longrightarrow & \mathcal{Q} \\
    (g, \ g') &\longmapsto & A_{q_0}(gg')=gg'\cdot q_0 \\
  \end{array}
\right.
$$
The mapping $\tilde{A}$ is $C^2$ as $A_{q_0}$ is $C^{\infty}$ (S.2) and by (G.3).
Let us first notice that we have the three equalities
\begin{equation*}
     \tA(g,g') =\left\{
     \begin{array}[h]{ll}
       A_{q_0}\circ L_{gg_t}\circ L_{g_t^{-1}}(g') &(Ea)\\
      A(\inte_{gg_t}\circ L_{g_t^{-1}}(g'),g\cdot q_t)) &(Eb)\\
     A(g,A(\inte_{g_t}\circ L_{g_t^{-1}}(g'),q_t))&(Ec)\\
     \end{array}\right.
\end{equation*}
  We start first by computing the derivative $\partial_1 \partial_2 \tilde{A}(e,g_t) \big(u_t, g_tv\big)$. We have, for $g\in G^{k_0+2}$:
  \begin{align*}
   \partial_2 \tilde{A}(g,g_t) T_eL_{g_t}(v)
    &\overset{(Eb)}{=} \partial_1 A(e,g\cdot q_t)\Ad_{gg_t}(v)\\
    &= \xi_{g\cdot q_t}(\Ad_{gg_t}(v))  
  \end{align*}
  and using $(Ea)$, we get also that $ \partial_2 \tilde{A}(e,g_t) T_eL_{g_t}(v)=T_{gg_t}A_{q_0}\circ T_eL_{gg_t}(v)$.  As $A_{q_0}$ is $C^{\infty}$ on $G^{k_0}$, and $g\in G^{k_0+1}\mapsto T_e L_{gg_t}(v)\in TG^{k_0}$ \ref{smooth_op} is also $C^{\infty}$, then the application $g\in G^{k_0+1}\mapsto \partial_2 \tilde{A}(g,g_t) T_eL_{g_t}(v)$ is $C^1$ on $G^{k_0+1}$. Furthermore, since the embedding $G^{k_0+2}\hookrightarrow G^{k_0+1}$ is smooth (G.1) and since $s\mapsto g_{s+t}g_{t}^{-1}$ is 
  $C^1$ in $G^{k_0+1}$ with derivative $u_t$ at $s=0$
  \begin{equation}
    \label{eq:12.12.4}
    \begin{split}
      \partial_{1,2}\tA(e,g_t) (u_t,g_tv)& = \partial_g(\xi_{g\cdot q_t}(\Ad_{gg_t}(v)))_{|g=e}(u_t)=\frac{d}{ds}(\xi_{q_{t+s}}(\Ad_{g_{t+s}}(v)))_{|s=0}\\
      &=\frac{d}{dt}(\xi_{q_{t}}(\Ad_{g_{t}}(v)))
  \end{split}
  \end{equation}



By Schwarz theorem, we also have
\begin{align*}
    \partial_{1,2} \tilde{A}(e,g_t) (u_t, g_tv) 
    &= \partial_{2,1}\tilde{A}(e,g_t) (u_t, g_tv) \\
    &\overset{(Ec)}{=} \partial_{2,1}A(e,q_t) \bigg(u_t,\partial_1A(e,q_t)   \Ad_{g_t}\big(T_{g_t}L_{g_t^{-1}}\circ T_eL_{g_t}(v)\big)\bigg)\\
  &= \partial_{2,1}A(e,q_t) (u_t,\Ad_{g_t}(v)\cdot q_t)\\
    &= \partial_{q} (\xi_{q}(u_t))_{|q=q_t} (\Ad_{g_t}(v)\cdot q_t)
\end{align*}

Therefore we have
\begin{equation}
        \label{term1}
        \left(p_t\,|\,\frac{d}{dt}\left(\Ad_{g_t}(v)\cdot q_t\right)\right) = \left(p_t\,|\,\partial_{q} (\xi_{q}(u_t))_{|q=q_t} (\Ad_{g_t}(v)\cdot q_t) \right)
      \end{equation}
      Since $p_t$ is solution of Hamiltonian equation, we get for $\delta q \in T_{q_t}\mathcal{Q}$ :
      $$\left(\dot{p}_t\,|\,\delta q\right) = - \partial_q\mathcal{H}(q_t,p_t,u_t) \delta q = \left(p_t\,|\,\partial_q (\xi_{q}(u_t))_{|q=q_t} \delta q\right)\,.$$

      Now for $\delta q= \mbox{Ad}_{g_t}(v)\cdot q_t$ :
\begin{equation}
    \label{term2}
    \left(\dot{p}_t\,|\,\mbox{Ad}_{g_t}(v)\cdot q_t\right) = -\left(p_t\,|\,\partial_q (\xi_{q}(u_t))_{|q=q_t}(\mbox{Ad}_{g_t}(v)\cdot q_t)\right)
\end{equation}

By summing \ref{term1} and \ref{term2} we therefore have $\frac{d}{dt}\left(p_t\,|\,\mbox{Ad}_{g_t}(v)\cdot q_t\right) = 0$ which concludes the proof of the lemma.
\end{proof}

Now we can complete the proof of the proposition. We have proved that for all $v\in V$, $t\mapsto(m_t\,|\,\operatorname{Ad}_{g_t}(v))$ is constant, i.e. :
\begin{equation*}
m_t=\operatorname{Ad}^*_{g^{-1}_t}(m_0)
\end{equation*}
We define 
$$
G_{m_0}: \left|
  \begin{array}{ccl}
    G^{k_0+1} & \longrightarrow & TG^{k_0+1} \\
    g &\longmapsto & K_V\operatorname{Ad}^*_{g^{-1}}(m_0)\cdot g=T_eR_g(K_V\operatorname{Ad}^*_{g^{-1}}(m_0)) \\
  \end{array}
\right.
$$
Since $\dot{g}_t=u_t\cdot g_t = K_Vm_t\cdot g_t$, the flow $g_t$ is therefore solution of the ordinary equation 

\begin{equation}
    \label{fin_eq}
        \dot{g}_t=G_{m_0}(g_t)
    \end{equation}
 on Banach space $G^{k_0+1}$, and therefore theorem \ref{theo:int_epdiff} is proved. To complete the proof of corollary \ref{cor:19.6.1}, we prove this equation admits one unique solution. We have the following lemma

\begin{lemma} 
\label{lem:fin_eq}
Let $m_0 \in (T_eG^{k_0})^*$. Then the mapping $G_{m_0}$ is locally-lipschitz from $G^{k_0+1}$ to $TG^{k_0+1}$ and the equation \ref{fin_eq} admits a unique maximal solution
\end{lemma}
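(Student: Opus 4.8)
The plan is to reduce the existence and uniqueness of a maximal solution to the local Lipschitz continuity of $G_{m_0}$, and then to establish the latter by decomposing $G_{m_0}$ while carefully tracking the single loss of derivative that the embedding $V\hookrightarrow T_eG^{k_0+2}$ is there to absorb. Indeed, once $G_{m_0}:G^{k_0+1}\to TG^{k_0+1}$ is shown to be locally Lipschitz in charts, the classical Cauchy--Lipschitz (Picard--Lindel\"of) theorem on Banach spaces, applied in a chart of $G^{k_0+1}$ exactly as in the proof of Proposition \ref{reg_groups} (see \cite[Theorem C.6]{Younes2019} or \cite{lang2001fundamentals}), yields for any initial condition a unique local solution; the union of all such local solutions is then the unique maximal solution of \eqref{fin_eq}. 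So the whole content lies in the Lipschitz estimate.

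To prove it, I would write $G_{m_0}(g)=T_eR_g\big(u(g)\big)$ with $u(g)\doteq K_V\operatorname{Ad}^*_{g^{-1}}(m_0)\in V\subset T_eG^{k_0+2}$. The right infinitesimal action $(u,g)\in T_eG^{k_0+2}\times G^{k_0+1}\to TG^{k_0+1}$, $(u,g)\mapsto T_eR_g(u)$, is $C^1$ by (G.4) (with $k=k_0+1$, $l=1$) and hence locally Lipschitz; since $g\mapsto(u(g),g)$ is locally Lipschitz as soon as $g\mapsto u(g)$ is, it suffices to prove that $g\mapsto u(g)$ is locally Lipschitz into $T_eG^{k_0+2}$. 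As $K_V:V^*\to V$ is a linear isometry, this reduces in turn to showing that the coadjoint map $g\in G^{k_0+1}\mapsto\operatorname{Ad}^*_{g^{-1}}(m_0)\in V^*$ is locally Lipschitz.

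For this last step I would pair against $v\in V\hookrightarrow T_eG^{k_0}$ and use the identity $\operatorname{Ad}_{g^{-1}}v=T_gL_{g^{-1}}\big(T_eR_g(v)\big)=(T_eL_g)^{-1}(v\cdot g)$, the last equality coming from $L_{g^{-1}}\circ L_g=\operatorname{id}$. Working in a chart around $e$ that trivializes $TG^{k_0}$, the map $(v,g)\mapsto v\cdot g$ is $C^1$, hence locally Lipschitz, by (G.4), while $g\mapsto T_eL_g$ is locally Lipschitz by Proposition \ref{smooth_op}(3). Moreover $T_eL_g$ is invertible, $L_g$ being a $C^1$ diffeomorphism for $g\in G^{k_0+1}$, and its inverse is locally bounded: near any fixed $g_*$ the chart representation of $g\mapsto T_eL_g$ is continuous and $T_eL_{g_*}$ is invertible, so by openness of the invertible operators and continuity of operator inversion (Neumann series) the inverse stays bounded on a neighborhood. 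The resolvent identity $A^{-1}-B^{-1}=A^{-1}(B-A)B^{-1}$ then promotes this to local Lipschitz continuity of $g\mapsto(T_eL_g)^{-1}$, and taking the product of the two locally Lipschitz and locally bounded factors $(T_eL_g)^{-1}$ and $v\mapsto v\cdot g$ shows that $g\mapsto\big(v\mapsto\operatorname{Ad}_{g^{-1}}v\big)\in\mathcal{L}(V,T_eG^{k_0})$ is locally Lipschitz. Since $m_0\in(T_eG^{k_0})^*$ is fixed and continuous, dualizing yields the desired local Lipschitz continuity of $g\mapsto\operatorname{Ad}^*_{g^{-1}}(m_0)\in V^*$, and hence of $G_{m_0}$.

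I expect the third step to be the genuine obstacle. The delicate point is that $\operatorname{Ad}$ is itself only $C^1$ and that the inversion $g\mapsto g^{-1}$ loses a derivative, so one cannot simply differentiate; the estimate must instead be built from the merely locally Lipschitz behaviour of $g\mapsto T_eL_g$ (Proposition \ref{smooth_op}(3)), routed through the inverse $(T_eL_g)^{-1}=T_gL_{g^{-1}}$ by the resolvent identity, with the one lost derivative supplied once and for all by the embedding $V\hookrightarrow T_eG^{k_0+2}$ that makes both $v\cdot g$ and $T_eR_g(u(g))$ genuinely $C^1$ through (G.4).
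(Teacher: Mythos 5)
Your proposal is correct and takes essentially the same route as the paper's own proof: the same decomposition $\operatorname{Ad}_{g^{-1}}=(T_eL_g)^{-1}\circ T_eR_g$, the same use of Proposition \ref{smooth_op}(3) for $g\mapsto T_eL_g$, the same appeal to (G.4) for $v\cdot g$ and for the final composition with the right infinitesimal action, dualization against the fixed $m_0$, and Picard--Lindel\"of to conclude. The only cosmetic difference is that you establish local Lipschitz continuity of $g\mapsto (T_eL_g)^{-1}$ by hand via the Neumann series and the resolvent identity, where the paper invokes smoothness of inversion in the Banach Lie group $GL(T_eG^{k_0})$, which is the same underlying fact.
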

\begin{proof}We start with the first point.
    We prove that $G_{m_0}$ is a locally-Lipschitz mapping. We have that for $g\in G^{k_0+1}$, $$\operatorname{Ad}_{g^{-1}} =  T_gL_{g^{-1}} \circ T_e R_g = (T_eL_g)^{-1} \circ T_e R_g$$ By proposition \ref{smooth_op}, the mapping $g \in G^{k_0+1}\mapsto T_eL_g\in \mathcal{L}(T_eG^{k_0},T_gG^{k_0})$ is locally-Lipschitz. In a local chart of the bundle $TG^k_0\rightarrow G^{k_0+1}$ around $g\in G^{k_0+1}$, $g\mapsto T_eL_g$ is the mapping $g\mapsto (g, \widehat{T_eL_g}) \in V\times GL(T_eG^{k_0})$. As $GL(T_eG^{k_0})$ is a Banach Lie group, the inverse mapping $\operatorname{inv} : GL(T_eG^{k_0}) \rightarrow GL(T_eG^{k_0})$ is smooth. Thus, the mapping $g \in G^{k_0+1}\mapsto (T_eL_g)^{-1}\in \mathcal{L}(T_gG^{k_0},T_eG^{k_0})$ is locally-Lipschitz. Furthermore, by (G.4), the mapping $(g,u)\in G^{k_0+1}\times T_eG^{k_0+2}\mapsto T_eR_g u \in TG^{k_0}$ is $C^2$, and thus by \cite[Theorem 5.3]{hideki1997infinite}, $g\in G^{k_0+1} \mapsto T_eR_g \in \mathcal{L}(T_eG^{k_0+2},T_gG^k_0)$ is $C^1$ and thus locally-Lipschitz. By bilinearity of the composition on 
    the product $\mathcal{L}(T_gG^k_0,T_eG^k_0)\times \mathcal{L}(T_eG^{k_0+2},T_gG^k_0)$, the mapping $g\in G^{k_0+1} \mapsto \operatorname{Ad}_{g^{-1}} \in \mathcal{L}(T_eG^{k_0+2},T_eG^k)$ and then dual mapping $g\in G^{k_0+1} \mapsto \operatorname{Ad}^*_{g^{-1}} \in L\left((T_eG^{k_0})^*,(T_eG^{k_0+2})^*\right)$ are also locally Lipschitz. Therefore  
    $$
  \begin{array}{ccl}
    G^{k_0+1} & \longrightarrow & T_eG^{k_0+2} \\
    g &\longmapsto & K_V\operatorname{Ad}^*_{g^{-1}}(m_0) \\
  \end{array}
    $$
    is locally Lipschitz. Now, since $\xi : T_eG^{k_0+2} \times G^{k+1}\mapsto TG^{k_0+1}$ is $C^1$, then $G_{m_0}$ is locally Lipschitz, and by Picard-Lindelof, the equation \eqref{fin_eq} has a unique solution.
\end{proof}

We can now finish the proof of corollary \ref{cor:19.6.1}. Let $((q^1_t,p^1_t),u^1_t) \in T\mathcal{Q}_1^*\times V$ and $((q^2_t,p^2_t),u^2_t) \in T\mathcal{Q}_2^*\times V$ satisfying the conditions of corollary \ref{cor:19.6.1}, and let $g^1_t$ and $g^2_t$ be their lifts in $G^{k_0+1}$. Then both $g^1_t$ and $g^2_t$ satisfies the differential equation in $G^{k_0+1}$ 
$$
\dot{g}_t = G_{m_0}(g_t)
$$
with $g^1_0=g^2_0=e_{G^{k_0+1}}$. By (G.5) and lemma \ref{lem:fin_eq}, this equation thus admits a unique solution, and therefore $g^1_t=g^2_t$

\section{Extended diffeomorphisms groups and regularity of the coadjoint map}
The usual path on riemannian shape spaces defined from the induced metric by the action of diffeomorphisms has been following the \emph{reduction} route: one computes the hamiltonian on the shape space $\mathcal{Q}$, derives there shooting algorithms encoded from a given initial momenta in the cotangent space and when needed the lifted trajectory. This happens to be quite effective when the space space $\mathcal{Q}$ supports some sort of discretisation into a finite dimensional space. A common setting is the case of landmarks where spaces are described as point clouds or various triangulations. If the reduction route can be beneficial from a memory point of view since providing finite dimensional encoding of the lifted geodesic in the group of diffeomorphisms, interestingly over-parametrization, or the \emph{unreduction} route, can be also helpful since it may allow a gain of regularity in the representation of the momentum compare to the reduced case. We consider in the following section an illustration of this phenomenon for the group of diffeomorphisms arising in a common situation of inexact matching for transport of atlases.
\subsection{Some extensions of groups of diffeomorphisms}
In this part, we define extensions of groups of diffeomorphisms that satisfy the conditions (G.1-5).\\

We first recall the definition of the diffeomorphisms group $\operatorname{Diff}_{C_0^k}(\R^d)$  for $k\geq 1$:
$$
G^k = \operatorname{Diff}_{C_0^k}(\R^d)=\left(\id + C_0^k(\R^d,\R^d)\right)\cap\operatorname{Diff}^1(\R^d)\,.
$$

Now we introduce two semi-direct products, $H^k$ and $N^k$: 
$$
H^k = \operatorname{Diff}_{C_0^k} \ltimes C_0^{k-1}(\R^d,\R^{+}_*)
$$
with group operation given for all $(\varphi,\omega),(\varphi',\omega')\in H^k$ by 
$$
(\varphi,\omega)\cdot (\varphi',\omega') = (\varphi\circ\varphi',\omega\circ\varphi' \omega')
$$
and  
$$
N^k = \operatorname{Diff}_{C_0^k} \ltimes C_0^{k-1}(\R^d,GL_d)
$$
with group operation given for all $(\varphi,A),(\varphi',A')\in N^k$  by 
$$
(\varphi,A)\cdot (\varphi',A') = (\varphi\circ\varphi',A\circ\varphi' A')
$$
where $GL_d$ is the general linear group on $\mathbb{R}$.
\begin{rem}
Clearly, $H^k$ and $N^k$ are (normal) extensions of the group $G^k$  so that $G^k$ can be seen as a subgroup of bigger groups (providing a natural setting for over-parametrization).
\end{rem}
\begin{rem}
  Similarly to $G^k$, we can consider the multiscale versions :
$$
\bH^k = \prod_{0\leq l< L} H^k, \ \bN^k = \prod_{0\leq l< L} N^k\,.
$$  
\end{rem}
In the following, for $\varphi\in G^k$, $J\varphi(x)\in GL_d$ is its Jacobian matrix at $x\in \R^q$ and $|J\varphi|(x)$ its Jacobian determinant.
\begin{prop} We have the following properties :
    \begin{enumerate}
        \item The families of groups $(H^k)_{k\geq 1}$, $(N^k)_{k\geq 1}$ satisfy the conditions (G.1-5).
        \item The group $G^k$ is embedded in both $H^k$ and $N^k$ through the \textbf{smooth} mappings
$$
\begin{array}{ccc}
i_H : \left|
  \begin{array}{ccl}
    G^k & \longrightarrow & H^k \\
    \varphi &\longmapsto & \left(\varphi,|J\varphi|\right) \\
  \end{array}\right.
     & \mbox{ and } &
     
  i_N:\left|
  \begin{array}{ccl}
    G^k & \longrightarrow & N^k \\
    \varphi &\longmapsto & (\varphi,J\varphi) \\
  \end{array}\right.
\end{array}
$$
and the differentials of this mappings are given by :
$$
\begin{array}{l}
  Ti_H : \left|\begin{array}{ccl}
    TG^k & \longrightarrow & TH^k \\
    (\varphi,\delta\varphi) &\longmapsto & \left((\varphi,|J\varphi|),(\delta\varphi,\ \operatorname{div}\left(\delta\varphi\circ\varphi^{-1}\right)\circ\varphi\left|J\varphi\right|)\right) \\ 
  \end{array}\right. \\ \\
       Ti_N : \left| \begin{array}{ccl}
    TG^k & \longrightarrow & TN^k \\
    (\varphi,\delta\varphi) &\longmapsto & \left((\varphi,|J\varphi|),(\delta\varphi,J\delta\varphi)\right) \\
  \end{array}\right.
\end{array}\,.
$$
 \item The mapping  $i_H$ is a morphism of groups ($i_H(\varphi'\circ\varphi)=i_H(\varphi')\cdot i_H(\varphi)$)
inducing an action of $G^k$ on $H^k$ by left multiplication ($\varphi\cdot h\doteq i_H(\varphi)\cdot h$) that satisfy (S.1-3) and such that we have for any $u\in T_eG^k$
$$T_{\varphi}i_H(u.\varphi)=T_{\id}i_H(u)\cdot i_H(\varphi)\,.$$ 
The same is true for $i_N$ and $N^k$. 
    \end{enumerate}
\end{prop}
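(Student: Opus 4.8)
The plan is to handle the three items in order, reducing everything to the regularity of $G^k=\operatorname{Diff}_{C_0^k}(\R^d)$ already recorded in the paper together with two elementary facts about functions of finite differentiability. Model $H^k$ near its identity $(\id,1)$ on $C_0^k(\R^d,\R^d)\times C_0^{k-1}(\R^d,\R)$ and $N^k$ near $(\id,I_d)$ on $C_0^k(\R^d,\R^d)\times C_0^{k-1}(\R^d,M_d)$ (with $M_d$ the $d\times d$ real matrices), the charts away from the identity being transported by the smooth right translations. In these coordinates the semidirect law $(\varphi',\omega')\cdot(\varphi,\omega)=(\varphi'\circ\varphi,\,(\omega'\circ\varphi)\,\omega)$ factors, coordinate by coordinate, through: (a) the composition $\varphi'\circ\varphi$, whose regularity is exactly (G.1--5) for $G^k$; (b) the \emph{right-composition with loss of derivatives}, $(\omega,\varphi)\in C_0^{k-1+l}\times\operatorname{Diff}_{C_0^k}\mapsto\omega\circ\varphi\in C_0^{k-1}$, which by the Faà di Bruno estimates (as in \cite{bauer2023regularity}) is $C^l$ and, being linear in $\omega$, is $C^\infty$ in that slot; and (c) pointwise multiplication, a bounded bilinear hence smooth map on $C_0^{k-1}$ and on $C_0^{k-1}(\R^d,M_d)$. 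Conditions (G.1) and (G.3) then follow by checking each coordinate, the $l$ extra derivatives carried by the more regular first factor $g'\in H^{k+l}$ exactly absorbing the loss in (b); (G.2) follows the same way from $(\varphi,\omega)^{-1}=(\varphi^{-1},1/(\omega\circ\varphi^{-1}))$ (resp. $(\varphi^{-1},(A\circ\varphi^{-1})^{-1})$) together with smoothness of $t\mapsto 1/t$ on $\R^+_*$ (resp. of matrix inversion on $GL_d$).

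For (G.4) and (G.5) I would differentiate the product law at the identity. The left infinitesimal action sends $((\delta\varphi_0,\delta\omega_0),(\varphi,\omega))$ to $(\delta\varphi_0\circ\varphi,\,(\delta\omega_0\circ\varphi)\,\omega)$, again governed by (b)--(c) and linear in the first argument, so it is $C^l$ and $C^\infty$ in that argument; while the right infinitesimal action produces the term $\langle\nabla\omega,\delta\varphi_0\rangle+\omega\,\delta\omega_0$ in which the single derivative $\nabla\colon C_0^{k}\to C_0^{k-1}$ is exactly paid for by the shift from $H^k$ to $H^{k+1}$ demanded in (G.5). The one genuinely delicate point, and the crux of this item, is fact (b): the grading $k-1$ on the fibre is chosen precisely so that each derivative taken in the $\varphi$-slot consumes one derivative of $\omega$, and it is this index bookkeeping, rather than any new idea, that must be carried through to obtain the asserted $C^l$ and $C^1$ regularities. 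The argument for $N^k$ is identical with the $M_d$- and $GL_d$-valued fibre replacing the scalar one.

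For item (2), $i_N(\varphi)=(\varphi,J\varphi)$ is smooth because $\varphi\mapsto J\varphi=I_d+d(\varphi-\id)$ is affine and bounded from $C_0^k$ into $C_0^{k-1}(\R^d,M_d)$, and $i_H(\varphi)=(\varphi,|J\varphi|)$ is its composition with the pointwise (polynomial) determinant; the differential $Ti_N$ is then immediate from the linearity of $J$, and $Ti_H$ follows from Jacobi's formula $D\det(M)[\dot M]=(\det M)\operatorname{tr}(M^{-1}\dot M)$ applied to $M=J\varphi$, $\dot M=J\delta\varphi$, combined with the chain rule $J\delta\varphi=\big(J(\delta\varphi\circ\varphi^{-1})\circ\varphi\big)\,J\varphi$ and conjugation-invariance of the trace, which turns $\operatorname{tr}(M^{-1}\dot M)$ into $\operatorname{div}(\delta\varphi\circ\varphi^{-1})\circ\varphi$. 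Finally, for item (3) the multiplicativity of the determinant and the chain rule $J(\varphi'\circ\varphi)=(J\varphi'\circ\varphi)\,J\varphi$ show that $i_H$ and $i_N$ are group morphisms; the induced map $\varphi\cdot h=i_H(\varphi)\cdot h=R_h(i_H(\varphi))$ is then a left action by the morphism property and satisfies (S.1--3) as the composition of the smooth embedding $i_H$ with the multiplication of $H^k$ whose regularity is item (1); and the equivariance $T_\varphi i_H(u\cdot\varphi)=T_{\id}i_H(u)\cdot i_H(\varphi)$ is obtained by differentiating the morphism relation $i_H\circ R_\varphi=R_{i_H(\varphi)}\circ i_H$ at $\id$ and evaluating on $u\in T_eG^k$.
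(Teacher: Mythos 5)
Your proof is correct and follows essentially the same route as the paper: both rest on the regularity of the composition map $(a,\varphi)\mapsto a\circ\varphi$ with loss of derivatives together with the Lie group structure of the fibres and the explicit inverse formulas for item (1), on Jacobi's formula combined with the chain rule $J\delta\varphi=\bigl(J(\delta\varphi\circ\varphi^{-1})\circ\varphi\bigr)J\varphi$ for the differential $Ti_H$ in item (2), and on the Jacobian chain rule $|J(\varphi'\circ\varphi)|=|J\varphi'|\circ\varphi\,|J\varphi|$ for the morphism property in item (3). If anything, you are more explicit than the paper on the verification of (G.4)--(G.5) via the infinitesimal actions and on the equivariance identity obtained by differentiating $i_H\circ R_\varphi=R_{i_H(\varphi)}\circ i_H$ at the identity, points the paper's proof leaves implicit.
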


\begin{proof}
    We already saw $\operatorname{Diff}_{C_0^k}$ satisfies the conditions (G.1-5). Furthermore, $C_0^{k-1}(\R^d,\R^{+}_*)$ and $C_0^{k-1}(\R^d,GL_d)$ are Lie groups. Furthermore, for $p,l\in \mathbb{N}$, the mapping $$ \begin{array}{ccl}
     C_0^{k-1+l}(\R^d,\R^p) \times G^k & \longrightarrow & C_0^{k-1}(\R^d,\R^p) \\
    (a,\varphi) &\longmapsto & a\circ\varphi \\
  \end{array}$$ is $C^l$ and $C^\infty$ with regards to the first variable. For $(\varphi,\omega)\in H^k$ (resp. $(\varphi,A)\in N^k$), its inverse is given by $(\varphi,\omega)^{-1} = (\varphi^{-1},\frac{1}{\omega\circ\varphi^{-1}})$ (resp. $(\varphi,A)^{-1} = (\varphi^{-1},A^{-1}\circ\varphi^{-1})$ and (G.2) is verified. Therefore $H^k$ and $N^k$ also satisfy (G.1-5).

  We directly see $i_H$ and $i_N$ are smooth morphisms of groups, and are embeddings. Let's compute their derivatives. For $(\varphi,\delta\varphi)\in TG^k$ we get
    \begin{align*}
        \partial_{\varphi}\left(|J\varphi|\right)\delta\varphi &=|\operatorname{det}(d\varphi)|\operatorname{tr}(d\varphi^{-1}d\delta\varphi) \\
        &= |J\varphi| \operatorname{tr}\left(d\left(\delta\varphi\circ\varphi^{-1}\right)\circ\varphi\right) \\
        &= |J\varphi| \operatorname{div}(\delta\varphi\circ\varphi^{-1})\circ\varphi
    \end{align*}
    Thus $T_{\varphi}i_H(\delta\varphi) = \left(\delta\varphi,\ \operatorname{div}\left(\delta\varphi\circ\varphi^{-1}\right)\circ\varphi\left|J\varphi\right|\right)$ and $T_{\varphi}i_N(\delta\varphi) = (\delta\varphi,J\delta\varphi)$. 

    The last point follows from the chain rule $|J(\varphi'\circ\varphi)| = |J\varphi'|\circ\varphi |J\varphi|$ which implies $i_H(\varphi'\circ\varphi)=i_H(\varphi')\cdot i_H(\varphi)$
\end{proof}

Since the family of groups $G^k$ act on $H^k$ and $N^k$ satisfying (S.1-3), we can apply proposition \ref{prop:19.6.1} and corollary \ref{cor:19.6.1}, and we have the following result :
\begin{prop}
    Let $k_0\geq 1$, and $V\hookrightarrow T_eG^{k_0+2}$ such that the hypotheses for the (GGA) framework are satisfied. Let $\varphi_0 \in G^{k_0}$ and $p^H_0=(p_0^{\varphi},p_0^{\omega})\in T_{i^H(\varphi_0)}^*H^{k_0}$ (resp. $p^N_0=(p^{\varphi}_0,p^{A}_0)\in T_{i^N(\varphi_0)}^*N^{k_0}$) such that $p_0^G=(Ti_H)^*p_0^H$ (resp. $p^G_0=(Ti_N)^*p_0^N$). Then we have:
\begin{enumerate}
    \item The normal geodesic in $H^{k_0}$ (resp. $N^{k_0}$) associated to the Hamiltonian equations with initial value $(i_{H}(\varphi_0),p^H_0)$ (resp. $(i_{N}(\varphi_0), p^{N}_0$)) stays in $i_H(G^{k_0})$ (resp. in $i_G(G^{k_0})$) and is a normal geodesic in the group $G^{k_0}$ starting from $(\varphi_0,p^G_0)$.     
    \item Furthermore, we have the following relation
    \begin{equation}
        p_t^G=(Ti_H)^*p_t^H\ \text{ (resp.  }
        p_t^G=(Ti_N)^*p_t^N)
    \end{equation}
\end{enumerate}
\end{prop}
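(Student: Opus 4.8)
The plan is to recognize both sides of the statement as instances of the correspondence principle of Corollary \ref{cor:19.6.1}, taking $\mathcal{Q}_1=G^{k_0}$ acting on itself by left multiplication (a valid (GGA) by Remark \ref{GGAonG}) and $\mathcal{Q}_2=H^{k_0}$ with the action $\varphi\cdot h\doteq i_H(\varphi)\cdot h$ just shown to satisfy (S.1-3). For $\mathcal{Q}_1$ the infinitesimal action is $\xi^{G}_{\varphi}(u)=u\cdot\varphi=T_eR_{\varphi}(u)$, so the momentum map reads $m_{G}(\varphi,p)=(T_eR_{\varphi})^{*}p$; for $\mathcal{Q}_2$, since $i_H(e)=e_{H^{k_0}}$, the infinitesimal action is $\xi^{H}_{h}(u)=T_eR_{h}\bigl(T_{\id}i_H(u)\bigr)$, whence $m_{H}(h,p)=(T_{\id}i_H)^{*}(T_eR_{h})^{*}p$ (here $R_h$ is right translation in $H^{k_0}$).

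First I would check that the two \emph{initial} momentum maps coincide. Using the equivariance relation $T_{\varphi}i_H(u\cdot\varphi)=T_{\id}i_H(u)\cdot i_H(\varphi)$ established above, i.e. $T_{\varphi}i_H\circ T_eR_{\varphi}=T_eR_{i_H(\varphi)}\circ T_{\id}i_H$, together with the hypothesis $p_0^{G}=(Ti_H)^{*}p_0^{H}=(T_{\varphi_0}i_H)^{*}p_0^{H}$, one computes
\begin{align*}
m_{G}(\varphi_0,p_0^{G})
&=(T_eR_{\varphi_0})^{*}(T_{\varphi_0}i_H)^{*}p_0^{H}
=\bigl(T_{\varphi_0}i_H\circ T_eR_{\varphi_0}\bigr)^{*}p_0^{H}\\
&=\bigl(T_eR_{i_H(\varphi_0)}\circ T_{\id}i_H\bigr)^{*}p_0^{H}
=m_{H}\bigl(i_H(\varphi_0),p_0^{H}\bigr).
\end{align*}
Denoting this common covector $m_0\in(T_eG^{k_0})^{*}$, Corollary \ref{cor:19.6.1} applies: writing $(\varphi_t,p_t^{G})$ for the Hamiltonian flow in $T^{*}G^{k_0}$ from $(\varphi_0,p_0^{G})$ and $(h_t,p_t^{H})$ for the one in $T^{*}H^{k_0}$ from $(i_H(\varphi_0),p_0^{H})$ (both global by Proposition \ref{prop:19.6.1}), the momentum trajectories agree, $m_t^{G}=m_t^{H}=:m_t$, and there is a single lifted trajectory $g_t$ with $\varphi_t=g_t\cdot\varphi_0$ and $h_t=g_t\cdot i_H(\varphi_0)$.

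Next, point (1) follows because $i_H$ is a group morphism: $h_t=g_t\cdot i_H(\varphi_0)=i_H(g_t)\,i_H(\varphi_0)=i_H(g_t\,\varphi_0)=i_H(\varphi_t)$, so the $H^{k_0}$-geodesic stays in $i_H(G^{k_0})$ and its preimage is exactly the $G^{k_0}$-geodesic $\varphi_t$ issued from $(\varphi_0,p_0^G)$. For point (2), I would repeat the momentum computation at time $t$: the same equivariance relation yields $m_{G}\bigl(\varphi_t,(T_{\varphi_t}i_H)^{*}p_t^{H}\bigr)=m_{H}\bigl(i_H(\varphi_t),p_t^{H}\bigr)=m_t=m_{G}(\varphi_t,p_t^{G})$. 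Since $R_{\varphi_t}$ is a diffeomorphism (Proposition \ref{smooth_op}), $T_eR_{\varphi_t}$ is a linear isomorphism, so $q\mapsto m_{G}(\varphi_t,q)=(T_eR_{\varphi_t})^{*}q$ is injective on $T^{*}_{\varphi_t}G^{k_0}$; hence $p_t^{G}=(T_{\varphi_t}i_H)^{*}p_t^{H}=(Ti_H)^{*}p_t^{H}$. The case of $N^{k_0}$ is identical, replacing $i_H$ by $i_N$.

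The main obstacle is the momentum-map identity of the second paragraph: everything hinges on the equivariance of the embedding, $T_{\varphi}i_H\circ T_eR_{\varphi}=T_eR_{i_H(\varphi)}\circ T_{\id}i_H$, which transfers the right-action structure from $G^{k_0}$ to $H^{k_0}$ and makes the two momentum maps pull back to the same element of $(T_eG^{k_0})^{*}$. The only other delicate point is the \emph{injectivity} of $q\mapsto m_{G}(\varphi,q)$ used to recover the cotangent momentum $p_t^{G}$ from $m_t$; this is precisely where the invertibility of the right translation $T_eR_{\varphi}$ enters, and is what makes the reduction to the momentum map lossless for the self-action of $G^{k_0}$.
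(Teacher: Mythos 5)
Your proof is correct and follows essentially the same route as the paper: both reduce the statement to Corollary \ref{cor:19.6.1} combined with the equivariance relation $T_{\varphi}i_H(u\cdot\varphi)=T_{\id}i_H(u)\cdot i_H(\varphi)$, and your operator-form derivation of $p_t^G=(Ti_H)^*p_t^H$ (via injectivity of $(T_eR_{\varphi_t})^*$) is the same computation the paper carries out by pairing against test vectors $\delta\varphi\in T_{\varphi_t}G^{k_0}$. The only difference is that you explicitly check that $p_0^G=(Ti_H)^*p_0^H$ forces equality of the \emph{initial} momentum maps before invoking the corollary -- a step the paper leaves implicit -- which is a gain in completeness rather than a different method.
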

\begin{proof}
    This is an immediate consequence of Corollary \ref{cor:19.6.1}. We just need to prove the relations between the co-state variables. Let $(\varphi_t,p_t^G)$ and $(h_t,p_t^H)$ the corresponding Hamiltonian geodesics such that $$m_G(\varphi_t,p_t^G) = m_H(h_t,p_t^H)$$ and $h_t=\varphi_t\cdot (\varphi_0,|J\varphi_0|) = i_H(\varphi_t)$, $p_t^H=(p_t^\varphi,p_t^\omega)$. Then for $\delta\varphi \in T_{\varphi_t}G^{k_0}$, we get 
    \begin{align*}
        \left(p_t^G\,|\,\delta\varphi\right) &= \left(m_G(\varphi_t,p_t^G)\,|\,\delta\varphi\circ\varphi_t^{-1}\right) 
        = \left(m_H(h_t,p_t^H)\,|\,\delta\varphi\circ\varphi_t^{-1}\right)\\
        &=\left(p_t^H\,|\,\delta\varphi\circ\varphi_t^{-1} \cdot i_H(\varphi_t) \right) 
        = \left(p_t^H\,|\,Ti_H(\delta\varphi)\right)
    \end{align*} 
    which gives the equality
    $
    p_t^G=(Ti_H)^*p_t^H
    $
\end{proof} 

\subsection{Regularity of the momentum}
In this section we arguments the introduction of the groups $H^{k_0}$ and $N^{k_0}$ by proving results on the regularity of the momentum. We first start with this general statement where we consider the Hamiltonian dual variable $p_t$ on $\mathcal{Q}$ where $\mathcal{Q}=G^{k_0}$ or $H^{k_0}$, or $N^{k_0}$ 
\begin{prop}
    If the momentum $p_t$ is $L^1$ at time $t=1$, then it stays $L^1$ for all previous times $t\leq 1$ and given by
    \begin{equation}
        p_t = p_1-\int_t^1 \partial_q(\xi_{q_s}(u_s))^*p_s ds
    \end{equation}
\end{prop}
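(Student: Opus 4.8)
The plan is to recognize $p_t$ as the solution of the linear co-state equation already isolated in \eqref{CLdual}, and then to upgrade the regularity of this solution from the full cotangent fibre to the subspace of $L^1$ densities by running a fixed-point argument directly in $L^1$. Along the normal Hamiltonian flow the dual variable obeys $\dot p_s = -\big(\partial_q(\xi_{q_s}(u_s))\big)^* p_s$, so integrating between $t$ and $1$ gives exactly the asserted integral identity; the only real content is therefore to show that this identity can be read as an equation \emph{within} $L^1$ once $p_1\in L^1$, using the continuous inclusion of densities $L^1\hookrightarrow (T_{q}\mathcal{Q})^*$ into the cotangent model.

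First I would make the operator $B_s\doteq\big(\partial_q(\xi_{q_s}(u_s))\big)^*$ explicit. For $\mathcal{Q}=G^{k_0}$ with $\xi_\varphi(u)=u\circ\varphi$ one has $\partial_\varphi(\xi_\varphi(u))\,\delta\varphi=(Du\circ\varphi)\,\delta\varphi$, so that, pairing against a density, $B_s p=(Du_s\circ q_s)^{\!\top}p$ is simply multiplication by a bounded matrix field; the same computation for the semidirect products $H^{k_0}$ and $N^{k_0}$ yields multiplication operators on each factor. Hence $B_s$ preserves $L^1$ with $\|B_s\|_{\mathcal{L}(L^1)}\le\|Du_s\|_\infty\le C\,|u_s|_V$, using $V\hookrightarrow T_eG^{k_0+2}=C_0^{k_0+2}(\R^d,\R^d)$. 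Since $|u_s|_V$ is constant along the flow (conservation of the reduced Hamiltonian $h$), we obtain $M\doteq\sup_{s\in I}\|B_s\|_{\mathcal{L}(L^1)}<\infty$.

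Next I would solve the terminal-value problem inside $L^1$. The affine map $\Phi(p)_t=p_1-\int_t^1 B_s p_s\,ds$ is a contraction on $C([t_0,1],L^1)$ as soon as $M(1-t_0)<1$; patching finitely many such intervals over $I$ produces a unique $\tilde p\in C(I,L^1)$ with $\tilde p_1=p_1$ solving the integral equation. Both $\tilde p$ and the genuine momentum $p$ solve the same linear Cauchy problem \eqref{CLdual} in the larger model dual, with identical value at $t=1$; Picard--Lindelöf uniqueness there forces $p_t=\tilde p_t$, so $p_t\in L^1$ for all $t\le 1$ and the stated formula holds.

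The step I expect to be the main obstacle is the explicit identification and $L^1$-boundedness of $B_s$: one must check that $\big(\partial_q(\xi_q u)\big)^*$ genuinely maps the $L^1$ subspace into itself (and not merely the full distributional dual) for each of $G^{k_0}$, $H^{k_0}$ and $N^{k_0}$, including the contributions of the extra semidirect factors, and that the resulting operator norm is controlled by $|u_s|_V$ through the embedding $V\hookrightarrow T_eG^{k_0+2}$. Everything downstream is then a routine linear contraction-and-uniqueness argument.
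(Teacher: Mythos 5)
Your proposal is correct and follows essentially the same route as the paper: both identify $\big(\partial_q(\xi_{q_s}(u_s))\big)^*$ acting on densities as multiplication by $(du_s\circ q_s)^{\top}$, bound its $\mathcal{L}(L^1)$-operator norm by a constant times $|u_s|_V$ through the embedding $V\hookrightarrow C_0^{k_0+2}(\mathbb{R}^d,\mathbb{R}^d)$, and then solve the backward linear Cauchy problem within $L^1$, identifying the result with $p_t$ by uniqueness. Your explicit contraction-and-patching argument and the final uniqueness identification simply spell out the step the paper compresses into its concluding sentence.
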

\begin{proof}
The proof is done in the case where $\mathcal{Q} = G^{k_0} = \operatorname{Diff}_{C_0^{k_0}}(\R^d)$, the other cases are similar.
    We recall $p_t$ verifies the Hamiltonian equation with endpoint $p_1\in L^1(\R^d,\R^d)$, i.e. $p_t$ is solution of the Linear Cauchy problem : 
    $$
    \left\{
    \begin{array}{l}
        \dot{p}(t) = - \partial_q(\xi_{q_t}(u_t))^*p(t)  \\
        p(1) = p_1  
    \end{array}
    \right.
    $$
    which immediately implies we have :
    $$
    p_t =  p_1-\int_t^1 \partial_q(\xi_{q_s}(u_s))^*p_s ds
    $$
    Let $q\in G^k$, $u \in T_eG^{k_0+2} = C_0^{k_0+2}(\R^d,\R^d)$, the linear operator $p\in L^1 \mapsto \partial_q(\xi_{q}(u))^*p$ is bounded with image in $L^1$. Indeed, for $\delta q \in T_{q}\mathcal{Q}= C_0^{k_0}(\R^d,\R^d)$, we have :
    \begin{align*}
        \left(\partial_q(\xi_{q}(u))^*p\,|\,\delta q\right) &= \left(p\,|\,\partial_q(\xi_{q}(u)) \delta q \right) \\
        &= \int_{\R^d} \langle p(x), \partial_q(\xi_{q}(u)) \delta q \rangle_{\R^d} dx \\
        &= \int_{\R^d} \langle p(x), du(q(x)) \delta q(x) \rangle_{\R^d} dx \\
        &= \int_{\R^d} \langle du(q(x))^Tp(x), \delta q(x) \rangle_{\R^d} dx
    \end{align*}
    Since $du \in C_0^{k_0+1}(\R^d,\mathcal{M}_d(\R))$, therefore $\left(du\circ q\right)^T p$ is still in $L^1$, we have :
    \begin{align*}  
    |\partial_q(\xi_{q}(u))^*p|_{L^1}&\leq |du|_{C_0^{k+1}} |p|_{L^1} \\
    &\leq \operatorname{Cte} |u|_V|p|_{L^1}
    \end{align*}
    and the mapping $(u,q) \mapsto \partial_q(\xi_{q}(u))^*=(du\circ q)^T \in \operatorname{end}(L^1)$ is continuous, and therefore the momentum $p_t$ is $L^1$ for all $t\in [0,1]$.
\end{proof}
\begin{rem}
    We can adapt the previous proof and find that if $p_1$ is in $L^p$, for $p\geq 1$ (resp. in $C_0^l(\R^d,\R^d)$ for $l\leq k$), then $p_t$ stays in $L^p$ (resp. in $C_0^l(\R^d,\R^d)$) for all time.
\end{rem}
In the following, we will only consider the groups $G^{k_0}$ and $H^{k_0}$. We saw that when we solve the Hamiltonian equations in both $G^{k_0}$ and $H^{k_0}$, we get the following equality between the momenta 
\begin{equation}
    p_t^G = (Ti_H)^* p_t^H
\end{equation}
The embedding $i_H:G^{k_0}\rightarrow H^{k_0}$ is highly non surjective (the image in $H^{k_0}$ is not even dense), and therefore the dual mapping $(Ti_H)^* : T^*H^{k_0}\rightarrow T^*G^{k_0}$ is not injective. 

Let's compute the dual mapping. Let $p^G\in T^*G^{k_0}$, $p^H=(p^{\varphi},p^w) \in T^*H^{k_0}$, and $\delta\varphi\in TG^{k_0}$ we have $\left((Ti_H)^*p^H\,|\,\delta \varphi \right)=(p^\varphi\,|\,\delta\varphi) + (p^\omega\,|\,\operatorname{div}(\delta\varphi\circ\varphi^{-1})\circ\varphi |J\varphi|)$, i.e. we have the following (in the weak sense) : 
\begin{equation}
    (Ti_H)^*p^H = p^{\varphi} - \left|J\varphi\right| \left(d\varphi^*\right)^{-1}\nabla p^{\omega}  
\end{equation}
In particular, if we have $p^G = (Ti_H)^*p^H$, then we get 
\begin{equation}
    \label{rel_mom}
    p^G =  p^{\varphi} - \left|J\varphi\right| \left(d\varphi^*\right)^{-1}\nabla p^{\omega} 
\end{equation}
Obviously, the momentum $(p^{G},0)\in T^*H^{k_0}$ satisfies equation \eqref{rel_mom}. The non-injectivity of $(Ti_H)^*$ allows us to also consider $(p^\varphi,p^\omega)$ to gain regularity. For example, we know that if $\Omega$ is a bounded regular openset of $\mathbb{R}^d$ and $p\in L^2(\Omega)$ with $\operatorname{curl}(p)=0$, generalisation of the Poincaré's lemma \cite{kesavan2005poincare,girault2012finite} gives existence of $h\in H^1_0(\Omega)$ such that $\nabla h=p$. This suggest a corresponding gain of regularity in the case $\varphi=e_G$, between $p^G$ and $p^\omega$ solutions of \eqref{rel_mom}. 

In the inexact matching setting, we recall that the regularity of the momentum is given by the endpoint contraint. Suppose that the endpoint constraint is given by the $C^1$ mapping $U : (\varphi,\omega) \in H^{k_0} \mapsto U(\varphi,\omega)\in\R$, and let $\varphi_1\in G^{k_0}$ be the endpoint for the geodesic equation. Then the geodesics momenta for Hamiltonian in both $G^{k_0}$ and $H^{k_0}$ have endpoints given by 
$$
\left\{
\begin{array}{l}
     p_1^\varphi = \partial_\varphi U(\varphi_1,d\varphi_1)  \\
     p_1^\omega = \partial_\omega U(\varphi_1,d\varphi_1) \\
     p_1^G = (Ti_H)^*d_{i_H(\varphi_1)}U = d_{(\varphi_1,|J\varphi_1|)}U\circ T_{\varphi_1}i_H = p_1^{\varphi} - \left|J\varphi_1\right| \left(d\varphi_1^{*}\right)^{-1}\nabla p_1^{\omega}
\end{array}
\right.
$$

Therefore if $d U(\varphi_1,\omega_1)\in T^*H^{k_0}$ is $L^1$, the couple $(p_1^\varphi,p_1^\omega)$ is also $L^1$. However we will see in next subsection a case where $p_1^G$ (and therefore $p_t^G$) has singular parts coming from singularities from the target. This means the topological and geometric properties of targets and sources leads to the use of either groups $G^{k_0}$ or $H^{k_0}$.

\subsection{An application to transport on atlases}
We here give a motivation for using groups $H^{k_0}$ and $N^{k_0}$ to compute geodesics induced by the diffeomorphism actions. This will have particular applications in the special case of atlas (at a tissue scale for example), where the date is given as an image defined on a set of regions with discontinuities at the boundaries. In the following, we will consider a metric induced by a Hilbert space $V\hookrightarrow C_0^{k_0+2}(\R^d,\R^d)$.

Let $X_1, X_2,\ldots ,X_n \subset \R^d$ open connected pairwise disjoint subsets of $\R^d$ with piecewise $C^1$ boundaries such that $\R^d=\cup_{i=1}^n \overline{X}_i$. We denote $\Sigma = \cup_i \partial X_i$ the boundary. We consider a template image $I_0 : \R^d \rightarrow \R$ such that $I_{0|X_i}$ is smooth for each $X_i$, and the sets $I_0(X_i)$ are pairwise disjoint, and a smooth target image $I_1:\R^d\rightarrow \R$. We assume that both $I_0$ and $I_1$ are square integrable.

We derive the Hamiltonian equations in the space $G^{k_0}$ : 
\begin{equation}
    \left\{
    \begin{array}{ll}
        \dot{\varphi}_t = u_t \circ \varphi_t \\
        \dot{p}_t = - du_t^*\circ\varphi_t\,p_t \\
        \langle u_t, v \rangle_V = (p_t|v\circ\varphi_t ) 
    \end{array}
    \right.
\end{equation}

Now we also need to add an endpoint condition to perform inexact matching 
We define, for $I$ an image, the endpoint
$$
U(I) = \int_{\R^d} \left(I_1-I\right)^2 (y)dy = |I_1-I|^2_{L^2}
$$
or equivalently $U(\varphi) = U(\varphi \cdot I_0) = \int_{\R^d} \left(I_1-I_0\circ\varphi^{-1}\right)^2(y) dy = \int_{\R^d} \left(I_1\circ\varphi-I_0\right)^2(x) \left|J\varphi(x)\right| dx$ with $\varphi \in G^{k_0}$. 

\begin{prop}
    The endpoint constraint $U$ is $C^1$ and the momentum $p_1^{G^{k_0}}$ at time $1$ is given as a mixture
    \begin{equation}
        p_1 = -2\left(I_0-I_1\circ\varphi\right)\nabla (\varphi_1\cdot I_0) \circ\varphi_1|J\varphi_1|\lambda_d + \mathcal{J}\circ\varphi_1 \left|J\varphi_{1|\Sigma}\right| \mathcal{H}^{d-1}_{\Sigma}
    \end{equation}

    where $\mathcal{H}_{|\Sigma}^{d-1}$ the $d-1$ Hausdorff measure on the boundary, and, for $x\in\varphi(\Sigma)$, $|J\varphi_{1|\Sigma}|$ is the Jacobian of $\varphi_1$ restricted to the tangent bundle of $\Sigma$ and
    $$
    \mathcal{J}(x) = \lim_{\epsilon\rightarrow 0} \left(\left(I_1(x)-\varphi\cdot I_0(x+\epsilon\overset{\rightarrow}{n}(x))\right)^2 - \left(I_1(x)-\varphi \cdot I_0(x-\epsilon\overset{\rightarrow}{n}(x))\right)^2\right) \overset{\rightarrow}{n}(x)
    $$
    with $\overset{\rightarrow}{n}(x)$ unit normal vector on $\varphi(\Sigma)$.
\end{prop}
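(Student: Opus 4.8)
The plan is to recognize that, since the endpoint cost in the inexact matching problem contributes its differential to the geodesic momentum (the critical-point characterization gives $p_1 = d_\varphi U(\varphi_1)$), it suffices to establish that $U$ is $C^1$ on $G^{k_0}$ and to compute $d_\varphi U(\varphi_1)$ explicitly. I would work from the change-of-variables form
\[
U(\varphi) = \int_{\R^d}\big(I_1\circ\varphi - I_0\big)^2\,|J\varphi|\,dx,
\]
in which $I_0$ appears only as a fixed square-integrable weight. The map $\varphi\mapsto I_1\circ\varphi$ is $C^1$ into bounded continuous functions thanks to the smoothness of $I_1$, $\varphi\mapsto|J\varphi|$ is smooth on $G^{k_0}$, and $I_0\in L^2$; combining these through the integral gives the $C^1$ regularity of $U$, with a Gateaux derivative depending continuously on $\varphi$.

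For the differential, I would differentiate in a direction $\delta\varphi\in T_\varphi G^{k_0} = C_0^{k_0}(\R^d,\R^d)$. Two terms arise: the variation of $I_1\circ\varphi$, namely $\partial_\varphi(I_1\circ\varphi)\delta\varphi = (\nabla I_1\circ\varphi)\cdot\delta\varphi$, and the variation of the Jacobian, $\partial_\varphi|J\varphi|\,\delta\varphi = |J\varphi|\,\operatorname{div}(\delta\varphi\circ\varphi^{-1})\circ\varphi$ (computed exactly as for $Ti_H$ above). Pushing everything to the Eulerian frame by $y=\varphi(x)$ and setting $w := \delta\varphi\circ\varphi^{-1}$, $R := I_1 - \varphi\cdot I_0$, the derivative becomes
\[
d_\varphi U(\varphi)\,\delta\varphi = \int_{\R^d} 2R\,(\nabla I_1\cdot w)\,dy + \int_{\R^d} R^2\,\operatorname{div}(w)\,dy.
\]

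The crux, and the main obstacle, is the integration by parts of the divergence term, since $\varphi\cdot I_0$, and hence $R^2$, is only piecewise smooth, jumping across the deformed interface $\Gamma := \varphi(\Sigma)$. I would apply the divergence theorem on each deformed region $\varphi(X_i)$ separately (legitimate because these are open sets with piecewise $C^1$ boundaries and $R$ extends smoothly up to each boundary piece), turning the volume term into $-\int 2R(\nabla I_1 - \nabla(\varphi\cdot I_0))\cdot w\,dy$ via $\nabla(R^2)=2R\nabla R$ a.e., plus a sum of boundary integrals. Because each facet of $\Gamma$ bounds two adjacent regions with opposite outward normals, those boundary integrals telescope into a single interfacial term $\int_\Gamma [R^2]\,(w\cdot\vec n)\,d\mathcal{H}^{d-1}$, where $[R^2]$ is the normal jump of $(I_1-\varphi\cdot I_0)^2$; getting the orientation of $\vec n$ and the sign of the jump consistent is precisely the delicate bookkeeping, and it is what produces the vector $\mathcal{J} = [R^2]\,\vec n$.

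Combining the two contributions, the smooth piece $2R\,\nabla I_1\cdot w$ cancels against the matching part of $\nabla(R^2)$, leaving the absolutely continuous Eulerian density $2R\,\nabla(\varphi\cdot I_0)$ together with the interfacial term. A final change of variables back to the source frame finishes the computation: the volume term acquires the factor $|J\varphi|$ against $\lambda_d$ and yields $-2(I_0 - I_1\circ\varphi)\,\nabla(\varphi\cdot I_0)\circ\varphi\,|J\varphi|$, while the surface integral transforms by the area formula for $\varphi|_\Sigma$, contributing the tangential Jacobian $|J\varphi_{|\Sigma}|$ against $\mathcal{H}^{d-1}_\Sigma$ and the term $\mathcal{J}\circ\varphi$. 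This reproduces exactly the claimed decomposition of $p_1$ into its absolutely continuous and singular parts.
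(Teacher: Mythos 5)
Your proof is correct and follows essentially the same route as the paper's: differentiate $U$ under the integral sign in its change-of-variables form, pass to the Eulerian frame via $y=\varphi(x)$, apply the divergence theorem on each deformed region $\varphi(X_i)$ so that the boundary contributions telescope into the jump term $\mathcal{J}$ on $\varphi(\Sigma)$, cancel the $\nabla I_1$ contributions against the volume part of the integration by parts, and change variables back to obtain the absolutely continuous and singular parts. The only discrepancy is the sign convention relating $p_1$ to $dU(\varphi_1)$ (the paper's proof takes $p_1=-dU(\varphi_1)$ and accordingly ends with the opposite sign on the Lebesgue part, while your choice $p_1=dU(\varphi_1)$ reproduces the proposition exactly as stated), a purely conventional point on which the paper itself is not internally consistent.
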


\begin{proof}
    We first start by proving $U$ is $C^1$ and we compute its derivative $dU$. Let $(\varphi, \delta\varphi)\in TG^{k_0} \simeq \operatorname{Diff}_{C_0^{k_0}}(\R^d)\times C_0^{k_0}(\R^d)$. 
    Since $I_1$ is smooth, we can differentiate under the integral sign and we have 
    \begin{align*}
         dU(\varphi)\delta\varphi &= \int_{\R^d}2 \left(I_1\circ\varphi-I_0\right) (x)\left\langle\nabla I_1\circ \varphi, \delta\varphi\right\rangle(x)  \left|J{\varphi}(x)\right| dx \\ &+ \int_{\R^d}\left(I_1\circ\varphi-I_0\right)^2 (x)\left|J\varphi\right| (x)\operatorname{div}(\delta\varphi\circ\varphi^{-1})\circ\varphi (x)dx \\
        &= \int_{\R^d}2 \left(I_1-\varphi\cdot I_0\right)(y)\left\langle\nabla I_1, \delta\varphi\circ\varphi^{-1} \right\rangle(y) dy + \int_{\R^d}\left|I_1-\varphi\cdot I_0\right|^2(y) \operatorname{div}(\delta\varphi\circ\varphi^{-1})(y) dy
    \end{align*}
Let's compute the second integral using Stokes theorem. We have :
\begin{align*}
    \int_{\R^d} &\left|I_1-\varphi\cdot I_0\right|^2(y) \operatorname{div}(\delta\varphi\circ\varphi^{-1})(y) dy = \sum_{i=1}^n \int_{\varphi(X_i)}\left|I_1-\varphi \cdot I_0\right|^2 (y)\operatorname{div}(\delta\varphi\circ\varphi^{-1})(y) dy \\
    & = \sum_{i=1}^n \left(-2 \int_{\varphi(X_i)} \left( I_1-\varphi\cdot I_0 \right) (y)\left\langle\nabla \left(I_1-\varphi\cdot I_0\right),\delta\varphi\circ\varphi^{-1}  \right\rangle(y) dy\right. \\ &  + \left. \int_{\partial \varphi(X_i)} \left|I_1(y)-\lim_{\epsilon\rightarrow 0}\varphi\cdot I_0(y-\epsilon \overset{\rightarrow}{n_i}(y))\right|^2 \left\langle \delta\varphi\circ\varphi^{-1}, \overset{\rightarrow}{n_i} \right\rangle (y)d\mathcal{H}^{d-1}_{\partial \varphi(X_i)}(y) \right) \\
    &= -2\int_{\R^d} \left( I_1-\varphi\cdot I_0 \right)\left\langle\nabla \left(I_1-\varphi\cdot I_0\right),\delta\varphi\circ\varphi^{-1}  \right\rangle(y) dy -\int_{\varphi(\Sigma)} \left\langle \mathcal{J}, \delta\varphi\circ\varphi^{-1} \right\rangle (y)d\mathcal{H}^{d-1}_{\varphi(\Sigma)}(y)
\end{align*}

Now as $p_1= - dU(\varphi_1)$, we get that :
\begin{equation}
    p_1 = -2\left(I_1\circ\varphi- I_0\right)\nabla (\varphi\cdot  I_0 )\circ\varphi|J\varphi|\lambda_d + \mathcal{J}\circ\varphi \left|J\varphi_{|\Sigma}\right| \mathcal{H}^{d-1}_{\Sigma}\,.
\end{equation}

\end{proof}

We see that the endpoint constraint can be written as $U(\varphi) = \tilde{U}(\varphi,|J\varphi|)$ where 
$$
\tilde{U}(\varphi,\omega) = \int_{\R^d}\left(I_1\circ\varphi(x) - I_0(x) \right)^2\omega(x) dx
$$
where $(\varphi,\omega)\in H^k$, and we can also compute the corresponding momentum $(p^{\varphi}_1,p^{\omega}_1)$ at time 1 which is $L^1$:
$$
\left\{\begin{array}{l}
     p^{\varphi}_1 = -\partial_{\varphi}\tilde{U}(\varphi_1,|J\varphi_1|) = 2\left(I_0-I_1\circ\varphi\right)\left|J\varphi_1 \right| \nabla I_1\circ\varphi \\
     p^{\omega}_1 = -\partial_{\omega}\tilde{U}(\varphi_1,|J\varphi_1|) = -|I_0-I_1\circ\varphi|^2
\end{array}\right.
$$


\section{Conclusion}
In this paper, we generalized the previous work on right-invariant sub-Riemannian structures on shape spaces induced by the group of diffeomorphisms \cite{arguillere2020sub,arguillere_trelat_2017,https://doi.org/10.48550/arxiv.1504.01767,ARGUILLERE2015139} to metrics induced by more general groups. In the (GGA) framework, we provided very general assumptions to study new group actions and capture more sophisticated deformations. \\

We also showed that the unreduction route, i.e. the over-parametrization through the use of extensions of the diffeomorphisms group, allows to gain some regularity on the momenta variables. Numerically, this disentanglement allows more degrees of freedom and we hope this can stabilize optimization and yield better solutions. The counterpart is that we need more variables and hence more memory for the new momenta variables. This gives new perspectives regarding optimization which are left for future investigations.

\appendix
\def\ACp{AC_{L^p}}
\def\AUi{\ACp(I_i,U_i)}
\def\AVi{\ACp(I_i,V_i)}
\def\B{\mathbb{B}}
\def\Bi{\B}
\def\Bip{\B}
\def\UBn{\mathcal{U}_{\mathcal{B}_n}}
\def\A{\mathcal{A}}
\def\An{\A_n}
\def\Ad{\A_2}
\def\UAn{\mathcal{U}_{\An}}
\def\teta{\tilde{\eta}}
\def\tetai{\teta_i}
\def\tetau{\teta_1}
\def\tetaip{\teta_{i+1}}
\def\trho{\tilde{\rho}}
\def\Ua{\mathcal{U}^\alpha}
\def\Ub{\mathcal{U}^\beta}
\def\Uc{\mathcal{U}^\gamma}
\def\Va{\mathcal{V}^\alpha}
\def\Vab{\mathcal{V}^{\alpha,\beta}}
\def\Vba{\mathcal{V}^{\beta,\alpha}}
\def\Ea{E^\alpha}
\def\Eb{E^\beta}
\def\Eab{E^{\alpha,\beta}}
\def\Eba{E^{\beta,\alpha}}
\def\Na{\mathcal{N}^\alpha}
\def\Nb{\mathcal{N}^\beta}
\def\Nab{\mathcal{N}^{\alpha,\beta}}
\def\jb{j^\beta}
\newtheorem{propA}{Proposition}[section]
\newtheorem{remA}{Remark}[section]
\section{Manifold structure on the set of absolutely continuous curves}
\label{app:A}
    In this part, we give a more detailed proof of proposition \ref{man_acg}, and put a Banach differentiable structure on the space of absolutely curves $AC_{L^p}([a,b],\mathcal{M})$, where $\mathcal{M}$ is differentiable manifold modeled on a Banach space $\mathbb{B}$. The proof is actually contained in \cite{Krikorian-1972}, in a more general setting, we include it here for the case of absolutely continuous curves for sake of completeness.

Let $\A$ be the set of all the $\alpha=(a^\alpha,U^\alpha,\varphi^\alpha)$ such that there exist a positive integer $n=n^\alpha$, a family $a^\alpha=(a^\alpha_i)_{0\leq i\leq n}$ such that $0=a_0<\ldots<a_n=1$ and two families $U^\alpha=(U^\alpha_i)_{1\leq i\leq n}$ and $\varphi^\alpha=(\varphi^\alpha_i)_{1\leq i\leq n}$ such that  $(U^\alpha_i,\varphi^\alpha_i)$ is a chart on $\mathcal{M}$ with $\varphi_i^\alpha:U^\alpha_i\to V^\alpha_i=\varphi^\alpha_i(U^\alpha_i)\subset \B$ for $1\leq i\leq n$. We will denote $I^\alpha=(I^\alpha_i)_{1\leq i\leq n}$ where $I^\alpha_i=[a^\alpha_{i-1},a^\alpha_i]$.
\\

For any $\alpha\in \A$ and $n=n^\alpha$, we denote $\Ua=\ACp(a^\alpha;U^\alpha)$ where
$$\ACp(a^\alpha;U^\alpha)=\{\ \eta\in C([0,1],\mathcal{M})\ |\ \eta(I^\alpha_i)\subset U^\alpha_i \text{ and }\varphi^\alpha_i\circ\eta_{|I^\alpha_i}\in \ACp(I^\alpha_i,\B),\ 1\leq i\leq n\ \}\,,$$
$\Ea=\prod_{i=1}^{n}\ACp(I^\alpha_i;\B)$ and $\Phi^\alpha:\Ua\to \Ea$ the one-to-one mapping defined by $$\Phi^\alpha(\eta)=(\varphi^\alpha_i\circ \eta_{|I^\alpha_i})_{1\leq i\leq n}\,.$$ 
We denote $\Na=\Phi^\alpha(\Ua)$.

\def\Uab{\mathcal{U}^{\alpha,\beta}}
\def\Uba{\mathcal{U}^{beta,\alpha}}
\def\Uap{\mathcal{U}^\alpha_\psi}
\def\Pap{\Phi^\alpha_\psi}

\begin{propA}
\label{man_ac}
    We can create a differentiable structure on $\ACp([0,1],\mathcal{M})$ by showing the two following facts:
\begin{enumerate}
\item \label{enum:21.9.1} For any $\alpha\in\A$, $\Na$ is a submanifold of $\Ea$.
\item \label{enum:21.9.2} For any $\alpha,\beta\in\A$ such that $\Uab=\Ua\cap\Ub\neq\emptyset$, then $\Phi^\alpha(\Uab)$ (resp.  $\Phi^\beta(\Uab)$) is an open subset of $\Na$ (resp. $\Nb$) and  $\Phi^{\alpha,\beta}=\Phi^\beta\circ(\Phi^\alpha)^{-1}:\Phi^\alpha(\Uab)\to \Phi^\beta(\Uab)$  is a smooth diffeomorphism.
\end{enumerate}
A smooth atlas is then given by the family of charts $(\Uap,\Pap)$, for any $\alpha\in\A$ and any chart $(\mathcal{V}_\psi,\psi)$ on $\Na$ where $\Uap=(\Phi^{\alpha})^{-1}(\mathcal{V}_\psi)$, and $\Pap=\psi\circ\Phi^\alpha$.
\end{propA}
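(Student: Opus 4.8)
The plan is to prove the two assertions separately: for \eqref{enum:21.9.1} I realize $\Na$ as the zero set of an explicit submersion encoding the gluing of the pieces, and for \eqref{enum:21.9.2} I reduce an arbitrary change of presentation to two elementary moves (refinement at fixed charts, and chart change at fixed nodes) by passing to a common refinement. Write $n=n^\alpha$ and let $W^\alpha\subset\Ea=\prod_{i=1}^n\ACp(I^\alpha_i,\B)$ be the set of tuples $\eta=(\eta_i)_i$ with $\eta_i(I^\alpha_i)\subset V^\alpha_i$ for all $i$ and $\eta_i(a^\alpha_i)\in\varphi^\alpha_i(U^\alpha_i\cap U^\alpha_{i+1})$ for $1\leq i<n$. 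Since $\ACp(I^\alpha_i,\B)\hookrightarrow C(I^\alpha_i,\B)$ continuously and each $I^\alpha_i$ is compact, the image conditions are open, and since each evaluation $\operatorname{ev}_{a^\alpha_i}$ is continuous linear the endpoint conditions are open as well, so $W^\alpha$ is open in $\Ea$. On $W^\alpha$ I define the gluing map
$$
\sigma^\alpha(\eta)=\big(\tau_i(\eta_i(a^\alpha_i))-\eta_{i+1}(a^\alpha_i)\big)_{1\leq i<n}\in\B^{\,n-1},\qquad \tau_i=\varphi^\alpha_{i+1}\circ(\varphi^\alpha_i)^{-1},
$$
which is smooth as a composition of continuous linear evaluations with the smooth transition diffeomorphisms $\tau_i$. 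By construction $\Na=(\sigma^\alpha)^{-1}(0)$ inside $W^\alpha$: a tuple with $\sigma^\alpha(\eta)=0$ glues into a single continuous curve $\eta$ with $\varphi^\alpha_i\circ\eta_{|I^\alpha_i}=\eta_i$, and conversely.

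To conclude \eqref{enum:21.9.1} I exhibit a bounded linear right inverse of $d\sigma^\alpha(\eta)$. For $w=(w_i)_{1\leq i<n}$ set $S(w)=(\zeta_j)_{1\leq j\leq n}$ with $\zeta_1=0$ and, for $2\leq j\leq n$, $\zeta_j$ the affine curve on $I^\alpha_j$ with $\zeta_j(a^\alpha_{j-1})=-w_{j-1}$ and $\zeta_j(a^\alpha_j)=0$; these interpolants lie in $\ACp$ and depend linearly and boundedly on $w$. Using $d\sigma^\alpha(\eta)(\zeta)_i=d\tau_i(\eta_i(a^\alpha_i))(\zeta_i(a^\alpha_i))-\zeta_{i+1}(a^\alpha_i)$ together with $\zeta_i(a^\alpha_i)=0$ and $\zeta_{i+1}(a^\alpha_i)=-w_i$, one gets $d\sigma^\alpha(\eta)S(w)=w$. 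Hence $d\sigma^\alpha(\eta)$ is onto with complemented kernel, so by the implicit function theorem $\Na$ is a split Banach submanifold of $\Ea$.

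For \eqref{enum:21.9.2}, openness of $\Phi^\alpha(\Uab)$ in $\Na$ holds because the extra requirement $\eta(I^\beta_j)\subset U^\beta_j$ defining $\Ub$ is, read in the $\alpha$-coordinates, an open condition of the same kind, so $\Phi^\alpha(\Uab)=\Na\cap(\text{open in }\Ea)$; symmetrically for $\beta$. To show $\Phi^{\alpha,\beta}$ is a smooth diffeomorphism I factor it through the common refinement $\gamma$ with nodes $a^\alpha\cup a^\beta$, writing $\Phi^{\alpha,\beta}=\Phi^{\gamma_\beta,\beta}\circ\Phi^{\gamma_\alpha,\gamma_\beta}\circ\Phi^{\alpha,\gamma_\alpha}$, where $\gamma_\alpha$ (resp. $\gamma_\beta$) carries $\gamma$'s nodes with $\alpha$'s (resp. $\beta$'s) charts. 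Here $\Phi^{\alpha,\gamma_\alpha}$ is a pure refinement at fixed charts: it restricts each $\eta^\alpha_i$ to the subintervals $I^\gamma_k\subset I^\alpha_{i(k)}$. Restrictions $\ACp(I,\B)\to\ACp(J,\B)$ are continuous linear, and their inverse, \emph{concatenation} of matching $\ACp$ pieces, is again continuous linear (the concatenated derivative lies in $L^p$ with controlled norm), so refinement is a diffeomorphism onto an open subset; its inverse $\Phi^{\gamma_\beta,\beta}$ is of the same type. The middle factor $\Phi^{\gamma_\alpha,\gamma_\beta}$ is a chart change at fixed nodes, post-composing each piece with a smooth transition map, hence smooth by Glöckner's $\Omega$-lemma \cite[Prop.~2.3, Lemma~3.27]{Glo}. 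Since each move preserves the corresponding gluing constraints, it carries the submanifolds of type \eqref{enum:21.9.1} into one another, and the composite is a smooth diffeomorphism onto an open subset, with inverse $\Phi^{\beta,\alpha}$.

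The main obstacle I anticipate is not any single computation but the Banach-space bookkeeping that makes the two constructions rigorous: verifying that $d\sigma^\alpha$ admits a \emph{bounded} linear right inverse (so that $\Na$ is genuinely split, not merely a level set), and that concatenation of $\ACp$ pieces is bounded linear (so that refinement inverts smoothly). Both rest on the elementary facts that evaluation and affine/constant extension are continuous between the relevant $\ACp$ spaces. The only nontrivial analytic input is the smoothness of post-composition by a smooth map on $\ACp$, which is available from \cite{Glo}; everything else reduces to continuity of explicit linear maps and the chain rule.
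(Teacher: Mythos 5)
Your proposal is correct and takes essentially the same route as the paper's own proof: part (1) is established there too by writing $\mathcal{N}^\alpha=(\sigma^\alpha)^{-1}(0)$ for the same gluing map and splitting its differential (the paper uses the closed subspace of tuples of \emph{constant} curves as a complement of the kernel where you build an affine-interpolant right inverse, which is equivalent), and part (2) is handled by the same common-refinement device, the paper writing $\Phi^{\alpha,\beta}=(j^\beta)^{-1}\circ\Psi^{\alpha,\beta}\circ j^\alpha$ where $j^\alpha,j^\beta$ are exactly your restriction/refinement maps (shown to be split linear embeddings, i.e.\ with continuous concatenation inverse) and $\Psi^{\alpha,\beta}$ is your chart change at fixed nodes, smooth by post-composition. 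The differences are purely cosmetic repackagings of the same two ideas.
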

\begin{remA}
    In this construction, the family of sets $(\mathcal{U}^{\alpha}_\psi)$ defines the manifold topology of $AC_{L^p}(I,\mathcal{M})$, and the sets $\mathcal{U}^{\alpha}$ are open submanifolds of $AC_{L^p}(I,\mathcal{M})$.
\end{remA}

\begin{proof}
    
We first show \eqref{enum:21.9.1}: For $n=1$, the result is trivial. For $n>1$, if we introduce the open set  $\Va=\{\ \eta\in\prod_{i=1}^n \ACp(I^\alpha_i;V^\alpha_i)\  |\ \eta_i(a_i)\in \varphi^\alpha_i(U^\alpha_{i+1})\ \forall 1\leq i\leq n-1\}$ of $\Ea$, we have $\Na\subset \Va$. Moreover, we have $\Na=(\sigma^\alpha)^{-1}(0)$ for $\sigma^\alpha:\Va\to \prod_{i=2}^{n}\B$ such that $\sigma^\alpha(\eta)=(\varphi^{\alpha}_{i+1}\circ(\varphi^\alpha_i)^{-1}\circ\eta_i(a^\alpha_i)-\eta_{i+1}(a^\alpha_i))_{1\leq i <n}$. Since $\sigma^\alpha$ is smooth,  it is enough to conclude to show that at any point $\eta\in \Na$, $\sigma^\alpha$ is a submersion at $\eta$. However, considering the closed subspace $H_n=\{\ (0,\delta \eta_2,\cdots,\delta \eta_{n})\ |\ \delta \eta_i:I^\alpha_i\to\B \text{ is contant}\ \}$ of $ T_{\eta} \Va\simeq \prod_{i=1}^{n} \ACp(I^\alpha_i,\B)$, we get immediatly that $T_{\eta} \sigma^\alpha $ is surjective and $\text{Ker}\,T_{\eta}\sigma^\alpha\oplus H_n= T_\eta\Va$ so that its kernel splits (see \cite{lang2001fundamentals} prop 2.2)\\
\def\ja{j^\alpha}

\begin{center}
  \begin{figure}[h]
    \includegraphics[scale=0.25]{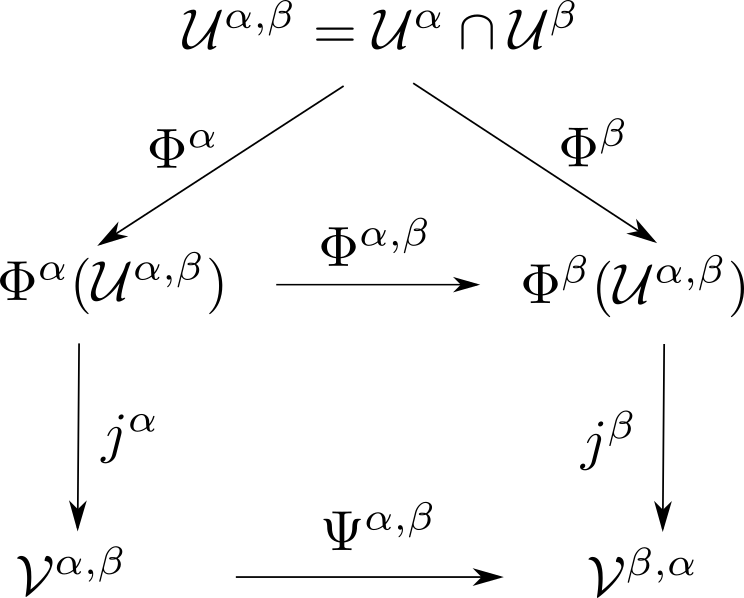}
  \end{figure}
\end{center}
Let us prove \eqref{enum:21.9.2}: Let first remark that there exists a unique $\gamma=(a^\gamma,U^\gamma)$ such that $\text{range}(a^\gamma)=\text{range}(a^\alpha)\cup \text{range}(a^\beta)$, $\Uab=\Uc$ with $U^{\gamma}_k=U^\alpha_{i_k}\cap U^\beta_{j_k}$ for $(i_k,j_k)$ the unique pair $(i,j)$ for which $I^\gamma_k=I^\alpha_i\cap I^\beta_j$. We check that $\Phi^\alpha(\Uab)=\Na\cap \prod_{i=1}^{n^\alpha}\mathcal{V}^{\alpha}_i$ with $\mathcal{V}^{\alpha}_i=\{\ \eta_i\in\ACp(I^\alpha_i,\B)\ |\ \eta_i(I^\gamma_k)\subset \phi^\alpha_i(U^\gamma_k),\ k\text{ s.t. }i=i_k\ \}$ open in $\ACp(I^\alpha_i,\B)$ so that $\Phi(\Uab)$ is an open set of $\Na$.

We consider the continuous linear injectif mapping  $\ja:\Ea\to \Eab=\prod_{1\leq k\leq n^\gamma}\ACp(I^\gamma_k,\B)$ defined by $\ja(\eta)=({\eta_{i_k}}_{|I^\gamma_k})_{1\leq k\leq n^\gamma}$. We check that $\ja$ is an isomorphism onto its image since $\Eab=\ja(\Ea)\oplus \prod_{k=1}^{n^\gamma}\{\ \delta\eta_k\in \ACp(I^\gamma_k,\B)\ |\ \delta\eta_k\equiv c\bm{1}_{i_k=0}\text{ for }c\in\R\ \}$ so that $\ja(\Ea)$ splits (see \cite{lang2001fundamentals} prop 2.2). Similarly $\jb:\Eb\to \Eba$ is an isomorphism onto its image. Moreover, we have a smooth diffeomorphic mapping $\Psi_{\alpha,\beta}:\Vab\to\Vba$ between the open set $\Vab=\prod_{k=1}^{n^\gamma}\ACp(I^\gamma_k,\varphi_{i_k}^\alpha(U^\gamma_k))\subset E^{\alpha,\beta}$ and the open set $\Vba=\prod_{k=1}^{n^\gamma}\ACp(I^\gamma_k,\varphi_{j_k}^\beta(U^\gamma_k))\subset\Eba$ given by $\Psi^{\alpha,\beta}(\eta=(\eta_k)_{1\leq k\leq n^\gamma})=(\varphi^\beta_{j_k}\circ(\varphi^{\alpha}_{i_k})^{-1}\circ\eta_k)_{1\leq k\leq n^\gamma}$. Since $\ja(\Phi^\alpha(\Uab))\subset \Vab$ and  for any $\eta\in \Uab$ we have $\Psi^{\alpha,\beta}\circ\ja\circ\Phi^\alpha(\eta)= \jb\circ\Phi^{\beta}(\eta)$ we get that $\Phi^{\alpha,\beta}=(\jb)^{-1}\circ\Psi^{\alpha,\beta}\circ\ja_{|\Phi^\alpha(\Uab)}$ so that $\Phi^{\alpha,\beta}$ is a smooth diffeomorphism. 
\end{proof}

We now want to characterize the tangent bundle of $AC_{L^p}(I,\mathcal{M})$. We first start by proving the following result
\begin{propA}
\label{propA:ev}
   The evalutation map $\operatorname{ev}_{t_0} : AC_{L^p}(I,\mathcal{M})\rightarrow \mathcal{M}$, where $t_0\in I$ is smooth.
\end{propA}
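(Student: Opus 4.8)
The plan is to prove smoothness locally, working in the charts $(\mathcal{U}^\alpha,\Phi^\alpha)$ constructed in Proposition \ref{man_ac}. Fix $\eta_0\in AC_{L^p}(I,\mathcal{M})$ and set $m_0=\eta_0(t_0)$. Choose $\alpha\in\mathcal{A}$ with $\eta_0\in\mathcal{U}^\alpha$, and let $i$ be an index for which $t_0\in I^\alpha_i=[a^\alpha_{i-1},a^\alpha_i]$ (if $t_0$ is a partition point either adjacent interval works, and we may always refine the partition to guarantee $m_0\in U^\alpha_i$). Since $\Phi^\alpha:\mathcal{U}^\alpha\to\mathcal{N}^\alpha$ is a diffeomorphism onto the submanifold $\mathcal{N}^\alpha\subset E^\alpha=\prod_{j=1}^{n}AC_{L^p}(I^\alpha_j,\mathbb{B})$, it suffices to show that the composite $\operatorname{ev}_{t_0}\circ(\Phi^\alpha)^{-1}:\mathcal{N}^\alpha\to\mathcal{M}$ is smooth. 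As $\mathcal{N}^\alpha$ is a (split) submanifold of the Banach space $E^\alpha$, I would establish this by exhibiting a smooth extension to an ambient open neighborhood.

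Concretely, I would consider the open set $\mathcal{W}=\{(\eta_j)_j\in E^\alpha : \eta_i(t_0)\in V^\alpha_i\}$, which contains $\Phi^\alpha(\eta_0)$, and define $F:\mathcal{W}\to\mathcal{M}$ by $F\big((\eta_j)_j\big)=(\varphi^\alpha_i)^{-1}\big(\eta_i(t_0)\big)$. For $(\eta_j)_j=\Phi^\alpha(\eta)\in\mathcal{N}^\alpha$ one has $\eta_i=\varphi^\alpha_i\circ\eta_{|I^\alpha_i}$, hence $\eta_i(t_0)=\varphi^\alpha_i(\eta(t_0))$ and $F(\Phi^\alpha(\eta))=\eta(t_0)=\operatorname{ev}_{t_0}(\eta)$, so $F$ extends $\operatorname{ev}_{t_0}\circ(\Phi^\alpha)^{-1}$. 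Now $F$ factors as the coordinate projection $\pi_i:E^\alpha\to AC_{L^p}(I^\alpha_i,\mathbb{B})$, followed by the Banach-space evaluation $\operatorname{ev}^{\mathbb{B}}_{t_0}:\gamma\mapsto\gamma(t_0)$, followed by the chart inverse $(\varphi^\alpha_i)^{-1}$; the first is continuous linear and the third is a smooth chart map on $\mathcal{M}$.

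The only point requiring verification is the smoothness of $\operatorname{ev}^{\mathbb{B}}_{t_0}$, which is also where I expect the (mild) technical content to sit. This map is continuous and linear: using the continuous inclusion $AC_{L^p}([a,b],\mathbb{B})\hookrightarrow C([a,b],\mathbb{B})\times L^p([a,b],\mathbb{B})$ recalled in Section 2, or directly from
$$
|\gamma(t_0)|_{\mathbb{B}}\leq |\gamma(a^\alpha_{i-1})|_{\mathbb{B}}+|t_0-a^\alpha_{i-1}|^{1-1/p}\,|\gamma'|_{L^p},
$$
evaluation at a fixed time is bounded linear, and a continuous linear map between Banach spaces is $C^\infty$. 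Hence $F$ is smooth as a composition of smooth maps, its restriction to $\mathcal{N}^\alpha$ is smooth, and therefore $\operatorname{ev}_{t_0}$ is smooth on $\mathcal{U}^\alpha$. Since $\eta_0$ was arbitrary, $\operatorname{ev}_{t_0}$ is smooth on all of $AC_{L^p}(I,\mathcal{M})$.
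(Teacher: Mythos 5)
Your proof is correct and follows essentially the same route as the paper's: localize in the charts $(\mathcal{U}^\alpha,\Phi^\alpha)$ of Proposition \ref{man_ac} and reduce to the fact that evaluation at a fixed time on $AC_{L^p}(I^\alpha_i,\mathbb{B})$ is a continuous linear (hence smooth) map, the only cosmetic difference being that you conjugate by the chart inverse $(\varphi^\alpha_i)^{-1}$ and exhibit an explicit ambient extension on $\mathcal{W}\subset E^\alpha$, while the paper composes with $\varphi^\alpha_i$ on the target and invokes the restriction-to-submanifold argument implicitly. Your explicit H\"older bound replaces the paper's citation of Gl\"ockner for continuity of the evaluation, but this is the same argument in slightly more detail.
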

\begin{proof}
    We prove this result using the previous construction. Let $t_0\in I$, let $\eta \in AC_{L^p}(I,\mathcal{M)}$, and let $\mathcal{U}^{\alpha} = AC_{L^p}(a^{\alpha};U^{\alpha})$ such that $\eta$ is in $\mathcal{U^{\alpha}}$. Let also $i\in\{1,..,n\}$ such that $t_0\in I_i^\alpha$, and thus
    $$
\operatorname{ev}_{t_0}(\mathcal{U}^\alpha)\subset U_i^\alpha
    $$
    It is equivalent to prove the smoothness of the induced evolution mapping $\tilde{\operatorname{ev}}_{t_0} = \varphi_i^\alpha \circ \operatorname{ev}_{t_0}\circ (\Phi^\alpha)^{-1} : \mathcal{N}^\alpha\rightarrow V_i^\alpha$, and given by :
    $$\tilde{\operatorname{ev}}_{t_0} : \left\{
    \begin{array}{ccl}
        \mathcal{N}^\alpha & \longrightarrow & V_i^\alpha\\
         (\eta_1,\ldots,\eta_n) & \longmapsto & \eta_i(t_0)
    \end{array}\right.
    $$
    This mapping is the restriction of a linear mapping $\tilde{\operatorname{ev}}_{t_0}:E^\alpha\rightarrow \mathbb{B}$, which is continuous since the evaluation is continuous in $AC_{L^p}(I_i^\alpha,\mathbb{B})$ \cite{Glo}, and thus $\tilde{\operatorname{ev}}:\mathcal{N}^\alpha\rightarrow V_i^\alpha$ is smooth. 
\end{proof}

Now we can finally characterize the tangent bundle of $AC_{L^p}(I,\mathcal{M})$, and we prove it is isomorph to the bundle $AC_{L^p}(I,T\mathcal{M})\rightarrow AC_{L^p}(I,T\mathcal{M})$

\begin{propA}
\label{propA:tan}
    Let $\pi_* : AC_{L^p}(I,T\mathcal{M}) \rightarrow AC_{L^p}(I,\mathcal{M})$ given by
    $$
\pi_* :\left\{
\begin{array}{ccl}
     AC_{L^p}(I,T\mathcal{M}) & \longrightarrow & AC_{L^p}(I,\mathcal{M})  \\
     \gamma & \longmapsto & \pi\circ \gamma
\end{array}
\right.
    $$ where $\pi=\pi_{T\mathcal{M}} : T\mathcal{M} \rightarrow \mathcal{M}$ is the tangent bundle of $\mathcal{M}$. Then $\pi_* : AC_{L^p}(I,T\mathcal{M}) \rightarrow AC_{L^p}(I,\mathcal{M})$ can be taken as the tangent bundle of $AC_{L^p}(I,\mathcal{M})$ through the isomorphism
    \begin{equation}
        \zeta : \left\{ 
        \begin{array}{ccl}
             TAC_{L^p}(I,\mathcal{M}) & \longrightarrow & AC_{L^p}(I,T\mathcal{M})  \\
             w & \longmapsto & [t\mapsto T_{\pi(w)}\operatorname{ev}_t w ]
        \end{array}
        \right.
    \end{equation}
\end{propA}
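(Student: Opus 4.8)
The plan is to show that $\zeta$ is a smooth vector-bundle isomorphism covering the identity of $AC_{L^p}(I,\mathcal{M})$, by computing it in the charts of Proposition \ref{man_ac} together with a companion atlas on $AC_{L^p}(I,T\mathcal{M})$. First I would apply the construction of Proposition \ref{man_ac} to the Banach manifold $T\mathcal{M}$, which is modelled on $\mathbb{B}\times\mathbb{B}$: for each $\alpha\in\A$ I use the same subdivision $a^\alpha$ and the charts $(TU^\alpha_i,T\varphi^\alpha_i)$ of $T\mathcal{M}$, obtaining charts $\Phi^\alpha_T:\mathcal{U}^\alpha_{T\mathcal{M}}\to E^\alpha_{T\mathcal{M}}=\prod_{i=1}^{n}AC_{L^p}(I^\alpha_i,\mathbb{B}\times\mathbb{B})$ whose image is the submanifold $\mathcal{N}^\alpha_{T\mathcal{M}}=(\sigma^\alpha_{T\mathcal{M}})^{-1}(0)$, where $\sigma^\alpha_{T\mathcal{M}}$ enforces matching at the interior nodes $a^\alpha_i$ through the transition maps of $T\mathcal{M}$. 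Since any two subdivisions admit a common refinement, these cover $AC_{L^p}(I,T\mathcal{M})$.

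Next I would check that $\zeta$ is well defined. Fix $w\in T_\eta AC_{L^p}(I,\mathcal{M})$ with $\eta=\pi(w)$. By Proposition \ref{propA:ev} each $\operatorname{ev}_t$ is smooth, so $t\mapsto T_\eta\operatorname{ev}_t(w)\in T_{\eta(t)}\mathcal{M}$ is a well-defined family of tangent vectors lying over $\eta$. Reading it in the chart $T\varphi^\alpha_i$ on the piece $I^\alpha_i$, and using that the local expression of $\operatorname{ev}_t$ is the restriction of the continuous linear map $(\eta_1,\dots,\eta_n)\mapsto \eta_i(t)$ (proof of Proposition \ref{propA:ev}), one gets $\zeta(w)(t)=(\eta_i(t),\delta\eta_i(t))$ where $(\eta_i)_i=\Phi^\alpha(\eta)$ and $(\delta\eta_i)_i$ are the chart components of $w$. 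Both $\eta_i$ and $\delta\eta_i$ lie in $AC_{L^p}(I^\alpha_i,\mathbb{B})$, so $\zeta(w)\in AC_{L^p}(I,T\mathcal{M})$, and $\pi_*(\zeta(w))=\eta=\pi(w)$; hence $\zeta$ is a bundle morphism over the identity, clearly linear on each fibre.

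The heart of the argument is then the local computation. Writing $TE^\alpha=E^\alpha\times E^\alpha$ and letting $j:TE^\alpha\to E^\alpha_{T\mathcal{M}}$ be the canonical topological isomorphism $((\eta_i),(\delta\eta_i))\mapsto\big(t\mapsto(\eta_i(t),\delta\eta_i(t))\big)_i$, the computation above shows that $\Phi^\alpha_T\circ\zeta\circ(T\Phi^\alpha)^{-1}$ is exactly the restriction of $j$ to $T\mathcal{N}^\alpha$. It remains to prove the key identity $j(T\mathcal{N}^\alpha)=\mathcal{N}^\alpha_{T\mathcal{M}}$. This follows by comparing defining equations: $\mathcal{N}^\alpha=(\sigma^\alpha)^{-1}(0)$ with $\sigma^\alpha(\eta)=(\tau_i(\eta_i(a^\alpha_i))-\eta_{i+1}(a^\alpha_i))_i$ and $\tau_i=\varphi^\alpha_{i+1}\circ(\varphi^\alpha_i)^{-1}$, so that $(\eta,\delta\eta)\in T\mathcal{N}^\alpha$ means $\sigma^\alpha(\eta)=0$ together with $D\tau_i(\eta_i(a^\alpha_i))\,\delta\eta_i(a^\alpha_i)=\delta\eta_{i+1}(a^\alpha_i)$ for all $i$. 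Since the transition maps of $T\mathcal{M}$ are $(y,v)\mapsto(\tau_i(y),D\tau_i(y)v)$, these are precisely the two scalar components of the node condition cutting out $\mathcal{N}^\alpha_{T\mathcal{M}}$. Thus $j$ carries $T\mathcal{N}^\alpha$ bijectively and smoothly onto $\mathcal{N}^\alpha_{T\mathcal{M}}$, so $\zeta$ is a chartwise diffeomorphism, linear and bijective on fibres, whence a vector-bundle isomorphism.

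I expect the main obstacle to be the bookkeeping behind this last identity: making the companion atlas on $T\mathcal{M}$ genuinely compatible with the one on $\mathcal{M}$ (handled by common refinement of subdivisions), and verifying that the gluing maps $\sigma^\alpha_{T\mathcal{M}}$ are exactly the linearizations of $\sigma^\alpha$, so that the \emph{naturality of the tangent functor} (the chain rule applied to the transitions $\tau_i$) makes the two submanifolds coincide under $j$. Once this alignment is established, smoothness, fibrewise linearity and the covering of the identity are immediate from the chart formula, and we conclude that $\pi_*:AC_{L^p}(I,T\mathcal{M})\to AC_{L^p}(I,\mathcal{M})$ may be taken as the tangent bundle via $\zeta$.
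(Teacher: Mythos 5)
Your proposal is correct and follows essentially the same route as the paper: both construct the companion atlas on $AC_{L^p}(I,T\mathcal{M})$ from the tangent charts $(TU_i^\alpha, T\varphi_i^\alpha)$ over the same subdivision, read $\zeta$ in these charts as the canonical identification $TE^\alpha \simeq \prod_i AC_{L^p}(I_i^\alpha,\mathbb{B}\times\mathbb{B})$, and conclude via the identification $T\mathcal{N}^{\alpha,\mathcal{M}}\simeq \mathcal{N}^{\alpha,T\mathcal{M}}$. In fact your node-by-node verification that $j(T\mathcal{N}^\alpha)=\mathcal{N}^\alpha_{T\mathcal{M}}$ (matching the linearized gluing conditions with the transition maps $(y,v)\mapsto(\tau_i(y),D\tau_i(y)v)$ of $T\mathcal{M}$) makes explicit precisely the step the paper dispatches with ``we can verify''.
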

\begin{proof}
    Since the evaluation mapping is smooth, the curve $\gamma : t\mapsto T_{\pi(w)}\operatorname{ev}_t w$ is well defined in $T\mathcal{M}$. Let's first show that it is an absolutely continuous curve. We denote $\eta = \pi(w)$, and we have $\eta(t) = \operatorname{ev}_t(\eta) = \pi(\gamma(t))$. Like previously in proposition \ref{man_ac}, we can define an open submanifold $(\mathcal{U}^{\alpha,\mathcal{M}},\Phi^{\alpha,\mathcal{M}})$ of $AC_{L^p}(I,\mathcal{M})$, where $\mathcal{U}^{\alpha,\mathcal{M}}=AC_{L^p}(a^\alpha,U^{\alpha,\mathcal{M}})$ for some open charts $(U_i^{\alpha,\mathcal{M}},\varphi_i^{\alpha,\mathcal{M}})$ of $\mathcal{M}$. We suppose $\eta \in \mathcal{U}^{\alpha,\mathcal{M}}$. We can now also define an induced submanifold $(\mathcal{U}^{\alpha,T\mathcal{M}},\Phi^{\alpha,T\mathcal{M}})$ of $AC_{L^p}(I,T\mathcal{M})$, where $\mathcal{U}^{\alpha,T\mathcal{M}}=AC_{L^p}(a^\alpha,U^{\alpha,T\mathcal{M}})$, $U_i^{\alpha,T\mathcal{M}} = TU_i^{\alpha,\mathcal{M}}$, and $\varphi_i^{\alpha,T\mathcal{M}} = T\varphi_i^{\alpha,\mathcal{M}}$. The diffeomorphism $\Phi^{\alpha,T\mathcal{M}} : \mathcal{U}^{\alpha,T\mathcal{M}}\rightarrow \mathcal{N}^{\alpha,T\mathcal{M}}$ is defined as previously with the diffeomorphisms $\varphi_i^{\alpha,T\mathcal{M}}$. This gives us the following commutative diagram 
    $$
\xymatrix{
     \mathcal{U}^{\alpha,\mathcal{M}} \ar[d]^-{\Phi^{\alpha,\mathcal{M}}} & \ar[l]_{\pi_*} \mathcal{U}^{\alpha,T\mathcal{M}}  \ar[d]^-{\Phi^{\alpha,T\mathcal{M}}} \\
    \mathcal{N}^{\alpha,\mathcal{M}} & \mathcal{N}^{\alpha,T\mathcal{M}}
  }
    $$
Since $\eta\in \mathcal{U}^{\alpha,\mathcal{M}}$, and $\eta(t)=\pi(\gamma(t))$, therefore for all $1\leq i \leq n$, $\gamma(I_i^\alpha)\subset U_i^{\alpha,T\mathcal{M}} = TU_i^{\alpha,\mathcal{M}}$, such that $T\varphi_i^\alpha\circ \gamma_{|I_i^\alpha}$ is in $TV_i^\alpha$ and for $t\in I_i^\alpha$ 
\begin{align*}
    T\varphi_i^\alpha\circ \gamma_{|I_i^\alpha}(t) &= T (\varphi_i^\alpha\circ\operatorname{ev}_t) w \\
    &= T (\varphi_i^\alpha\circ\operatorname{ev}_t\circ(\Phi^{\alpha,\mathcal{M}})^{-1}) T\Phi^{\alpha,\mathcal{M}}w
\end{align*}
Since $\tilde{w}=T\Phi^{\alpha,\mathcal{M}}w\in T\mathcal{N}^\alpha \subset \prod_{i=1}^nAC_{L^p}(I_i^\alpha,V_i^\alpha)\times AC_{L^p}(I_i^\alpha,\mathbb{B})$, we thus have $T\varphi_i^\alpha\circ \gamma_{|I_i^\alpha}(t)=\tilde{w}(t)$, and then $\gamma_{|I_i^\alpha}\in AC_{L^p}(I_i^\alpha,U_i^{\alpha,T\mathcal{M}})$ and $\gamma$ is continuous. Therefore $\gamma$ is in $\mathcal{U}^{\alpha,T\mathcal{M}}\subset AC_{L^p}(I,T\mathcal{M})$, and $\zeta(T\mathcal{U}^{\alpha,\mathcal{M}})\subset \mathcal{U}^{\alpha,T\mathcal{M}}$. \\

Let's prove now $\zeta : T\mathcal{U}^{\alpha,\mathcal{M}}\rightarrow \mathcal{U}^{\alpha,T\mathcal{M}}$ is smooth. According to what precedes, we get the following diagram
$$
\xymatrix{
     T\mathcal{U}^{\alpha,\mathcal{M}} \ar[dd]^-{T\Phi^{\alpha,\mathcal{M}}}  \ar[r]^{\zeta} & \mathcal{U}^{\alpha,T\mathcal{M}}  \ar[dd]^-{\Phi^{\alpha,T\mathcal{M}}} \\ \\
    T\mathcal{N}^{\alpha,\mathcal{M}} \ar@{^{(}->}[d] & \mathcal{N}^{\alpha,T\mathcal{M}} \ar@{^{(}->}[d] \\
    TE^\alpha  \ar[r]^-{\sim} & \prod_{i=1}^n AC_{L^p}(I_i^\alpha,\mathbb{B}\times\mathbb{B})
  }
    $$
And we can verify that under the identification $TE^{\alpha}\simeq\prod_{i=1}^n AC_{L^p}(I_i^\alpha,\mathbb{B}\times\mathbb{B})$, we also have the identification $T\mathcal{N}^{\alpha,\mathcal{M}}\simeq\mathcal{N}^{\alpha,T\mathcal{M}}$, and this identification is exactly given by $\Phi^{\alpha,T\mathcal{M}}\circ\zeta\circ (T\Phi^{\alpha,\mathcal{M}})^{-1}$. Therefore $\zeta : T\mathcal{U}^{\alpha,\mathcal{M}} \rightarrow \mathcal{U}^{\alpha,T\mathcal{M}}$ is a smooth bundle isomorphism with base space $\mathcal{U}^{\alpha,\mathcal{M}}$
\end{proof}

\bibliographystyle{abbrv}
\bibliography{ref}

\end{document}